\documentclass[12pt,reqno,a4paper]{amsart}

\usepackage[pdftex]{graphicx}
\usepackage{subcaption}
\usepackage{pgfplots}
\usepackage{tikz}  
\usetikzlibrary{positioning, arrows.meta, shadows, calc, decorations.pathreplacing,matrix}
\usepackage{amsmath,amssymb,amsthm,mathtools, amsfonts, hyperref}

\pgfplotsset{compat=1.7}

\usepackage{a4wide}
\usepackage{amsmath}
\usepackage{amssymb,amsthm,hyperref,caption}
\usepackage{xcolor}
\definecolor{meinBlau}{rgb}{0.2,0.2,0.9} 
\definecolor{blau}{rgb}{0,0,0.75} 
\definecolor{rot}{rgb}{0.74,0,0} 
\definecolor{darkgreen}{rgb}{0,0.4,0.1} 
\definecolor{codegreen}{rgb}{0,0.6,0} 
\definecolor{codeblue}{rgb}{0,0.1,0.7} 

\definecolor{myblauA}{rgb}{0,0.1,1} 
\definecolor{myblauB}{rgb}{0,0.2,0.8} 
\definecolor{myblauC}{rgb}{0,0.3,0.7}
\definecolor{myblauD}{rgb}{0,0.4,0.55}
\definecolor{myblauE}{rgb}{0,0.5,0.4}
\definecolor{myblauF}{rgb}{0,0.6,0.3}

\hypersetup{colorlinks,linkcolor=blau,citecolor=blue,urlcolor=meinBlau}
\usepackage{times}
\usepackage{enumitem}
\usepackage{verbatim}
\usepackage{listings}
\usepackage{fancyvrb}

\lstdefinestyle{Rstyle}{
language=R, 
backgroundcolor=\color{white}, 
commentstyle=\color{codegreen}, 
keywordstyle=\color{codeblue}, 
numberstyle=\color{codeblue}, 
stringstyle=\color{codegreen}, 
basicstyle=\ttfamily\tiny, 
morekeywords={TRUE,FALSE}, 
deletekeywords={data,frame,length,as,character}, 
showstringspaces=false 
}

\allowdisplaybreaks

\newtheorem{theorem}{Theorem}
\newtheorem{lem}[theorem]{Lemma}

\newtheorem{corollary}{Corollary}
\newtheorem{prop}{Proposition}
\newtheorem{proposition}{Proposition}

\theoremstyle{definition}

\newtheorem{remark}{Remark}
\newtheorem{example}{Example}

\newtheorem{definition}{Definition}

\def\P{{\mathbb {P}}}
\def\E{{\mathbb {E}}}
\def\Var{{\mathbb {V}}}

\newcommand{\up}{\uparrow}
\newcommand{\down}{\downarrow}

\newcommand{\R}{\ensuremath{\mathbb{R}}}

\DeclareMathOperator{\fc}{FC}

\DeclareMathOperator{\law}{\overset{\mathcal{L}}{=}}

\DeclareMathOperator{\maxP}{maxP}

\newcommand{\fv}{\ensuremath{\mathfrak{v}}}
\newcommand{\round}[1]{\left(#1\right)}

\begin{document}

\author[A.~Clay]{Alexander Clay}
\address{Alexander Clay \\ Department of Mathematics\\
University of Southern California\\
Los Angeles, CA, USA\\} %
\email{ajclay@usc.edu}

\author[M.~Kuba]{Markus Kuba}
\address{Markus Kuba\\
Department Applied Mathematics and Physics\\
University of Applied Sciences - Technikum Wien\\
H\"ochst\"adtplatz 5, 1200 Wien} %
\email{kuba@technikum-wien.at}

\author[R.~Tripathi]{Raghavendra Tripathi}
\address{Raghavendra Tripathi\\
Department of Mathematics, Division of Science, New York University, Abu Dhabi, UAE}
\email{r.tripathi@nyu.edu}

\title[Card Guessing after a Shelf Shuffle]{On card guessing after an asymmetric single-shelf shuffle}

\keywords{Shelf-shuffle, Card guessing, complete feedback, optimal strategy, limit laws, generating function, central limit theorem, local CLT, large deviations, log-concavity}%
\subjclass[2000]{05A15, 05A16, 60F05, 60C05} %

\begin{abstract} 
We provide a definitive analysis of the number of correct guesses in the complete-feedback card guessing game after an asymmetric single-shelf shuffle with parameter $p\in (0, 1)$. We explicitly describe the optimal strategy that maximizes the expected number of correct guesses. We study the number of correct guesses, under an optimal strategy, using methods from analytic combinatorics. In addition, we find the explicit distribution for the number of correct guesses, and thus find the mean and the variance for the number of correct guesses. We show that the distribution of the number of correct guesses is log-concave. We also study the limiting behaviour of the number of correct guesses as the number of cards goes to infinity. In particular, we prove a (local) central limit theorem and a large deviation principle with an explicit rate function. We also prove phase transitions for the number of correct guesses near $p=0$ and $p=1$. Prior to this work, only the optimal strategy and the expected number of correct guesses under the optimal strategy were known for the special case of $p=1/2$.  
\end{abstract}

\maketitle
\section{Introduction and Main Results}
\label{sec:Intro}
The analysis of card shuffling has a long and rich history, dating back to the work of Markov \cite{Markov} and Poincar\'e~\cite{Poincare}. Card shuffling intersects with several branches of mathematics, such as representation theory, combinatorics, probability theory, and statistics. We refer the reader to the recent book~\cite{DiaconisFulman2023Shuffling} by Diaconis and Fulman for a modern and comprehensive account. 

Famous shuffling techniques include the riffle shuffle~\cite{BayerDiaconis,DiaconisGraham1981,Gilbert1955}, also called dove-tail shuffling, as well as the shelf shuffle~\cite{DiaconisFulmanHolmes2013}; we refer the reader to~\cite{Sellke, Kraitchik1942Recreations} for references concerning other shuffling techniques. One of the first questions in the analysis of card shuffling is to quantify how fast the shuffling scheme mixes the cards~\cite{AldousDiaconis1986}. This is often done by proving mixing time estimates and the cutoff phenomenon~\cite{Sellke,diaconis1996cutoff}. A related aspect is to study the statistics (e.g., descents, inversions, fixed points, etc.) of the permutations induced by a shuffle. In many practical situations, one is often interested in a particular statistic, and it suffices for that statistic to become sufficiently random. 

Another fascinating question is to study card guessing games~\cite{BayerDiaconis} with different types of feedback. In card-guessing games, a player sequentially guesses a shuffled deck of cards and is possibly provided some feedback (e.g., if the guess is correct or not) after each guess. The key quantity of interest here is the number of correct guesses the player can make. Obviously, the number of correct guesses depends on the guessing strategy of the player; for example, a player may decide to guess the card $1$ at each step, in which case the player makes exactly one correct guess almost surely. The goal of the player is to maximize the expected number of correct guesses over all possible guessing strategies. Such a strategy is called an \emph{optimal strategy}. Optimal strategies are not necessarily unique, and they depend on the shuffling scheme. Finding an optimal strategy is also a non-trivial problem in most cases. Even when an optimal strategy is known, it may be difficult to implement in practice, and in such cases, there has also been significant interest in investigating strategies that are close to optimal but are more practical. The number of correct guesses under these strategies has been investigated by several authors~\cite{OttoliniSteiner2023, HeOttolini2021, OT2024, Diaconis2022a, DiaconisGraham1981, KP2024}. Card guessing games can provide a qualitative understanding of the effectiveness of a shuffle, and they also have important applications in clinical trials and fraud detection~\cite{BlackwellHodges1957, Efron1971, Diaconis1978, Fisher1936, Proschan91}. Throughout this paper, whenever we mention the number of correct guesses, it is assumed that the player uses an optimal strategy.

In this paper, we study the asymmetric shelf shuffle (with a single shelf), which is a generalization of the shelf shuffle introduced in~\cite{DiaconisFulmanHolmes2013}. We investigate the distributional properties of the number of correct guesses in the complete feedback game (under an optimal strategy). We begin with the definition. 
\begin{definition}[Asymmetric single shelf shuffle]
We start with an ordered deck of $n$ cards labelled $1$ through $n$ and a parameter $p\in(0,1)$. Cards are drawn from the bottom of the deck and placed in a pile. 
When a card is drawn, it is placed at the top with probability $p$ and at the bottom of the pile with probability $1-p$.
\end{definition}
Throughout this paper, we will only consider the single-shelf shuffle. One can immediately generalize this to an asymmetric shelf shuffle with multiple shelves. We refer the reader to~\cite{tri26analysis, Tri26cut} for more details on the asymmetric shelf shuffle. Diaconis, Fulman, and Holmes~\cite{DiaconisFulmanHolmes2013} initiated the study of the shelf-shuffle when $p=1/2$. For concreteness, we refer to this as the \emph{symmetric shelf shuffle}. While~\cite{DiaconisFulmanHolmes2013} proved several interesting results about shelf-shuffles, many natural questions remain still unexplored. Very recently,~\cite{ChenOttolini2025} established the cutoff phenomenon for the symmetric shelf-shuffle and determined the exact cutoff profile, and the first author~\cite{Clay2025b} studied descents and inversions. Card guessing after a symmetric shelf shuffle with a single shelf was analyzed in~\cite{Clay2025a, Tripathi2026}. For the asymmetric shelf shuffle, the law was determined recently, and cutoff was proven in~\cite{Tri26cut}. Generating functions for several statistics of the shelf shuffle were obtained in~\cite{tri26analysis}. 

\begin{example}[Single-shelf shuffle with $20$ cards with $p=0.7$]
Except for the card $20$, we can imagine that every card is assigned an independent up ($\up$) or down ($\down$) label with probability $p$ and $1-p$, respectively. Equivalently, for $1\leq j\leq n-1$, let $\varepsilon_j=1$ if card $j$ is placed at the top of the pile, and let $\varepsilon_j=0$ if it is placed at the bottom; then the $\varepsilon_j$ are independent Bernoulli$(p)$ random variables. All the cards assigned the label $\up$ are placed above the card $n$ in increasing order, while all the cards labelled $\down$ are placed below the card $n$ in decreasing order to obtain the shuffled deck. We illustrate this with an example.
\begin{center}
\begin{tikzpicture}[x=0.6cm,y=0.6cm]
\foreach \i in {1,...,20} {
  \draw (\i-1,1) rectangle (\i,2);
  \node at (\i-0.5,1.5) {\i};
}
\foreach \i/\s in {1/$\down$, 2/$\up$,3/$\down$,4/$\up$,5/$\up$,6/$\down$,7/$\up$,8/$\down$,9/$\up$,10/$\down$,11/$\down$,12/$\up$,13/$\up$,14/$\down$,15/$\up$,16/$\down$,17/$\down$,18/$\up$,19/$\down$,20/} {
  \draw (\i-1,0) rectangle (\i,1);
  \node at (\i-0.5,0.5) {\s};
}
\end{tikzpicture}
\end{center}
This leads to the following shuffled deck:
\begin{center}
\begin{tikzpicture}[x=0.6cm,y=0.6cm]
\foreach \i [count=\n from 1] in {2,4, 5, 7, 9, 12,13,15,18, 20,19,17,16,14,11,10,8, 6,3,1} {
  \draw (\n-1,0) rectangle (\n,1);
  \node at (\n-0.5,0.5) {\i};
}
\end{tikzpicture}
\end{center}

Note that for any $p$, a single-shelf asymmetric shelf always produces a \emph{unimodal permutation}. That is, in the shuffled deck, the cards are increasing first until the card $n$, and thereafter the remaining cards are in decreasing order. 
\end{example}
Since the card $1$ is dealt last, it goes to the top in the final deck with probability $p$ and to the bottom with probability $1-p$. In fact, it is straightforward to compute the position matrix of a single-shelf asymmetric shelf shuffle, which generalizes \cite[Theorem 1.1]{Clay2025a}.
\begin{prop}[Position matrix of an asymmetric shelf shuffle]
\label{prop:posi}
Suppose a deck of $n$ cards is shuffled once in an asymmetric shelf-shuffle with a single shelf and parameter $p\in (0, 1)$.
Let $m_{i,j}$ be the probability that card $i$ lands in position $j$.
Then, we have
\[
m_{i,j}
=
p^{j}(1-p)^{i-j}\binom{i-1}{j-1} + p^{i-1-(n-j)}(1-p)^{n-j+1}\binom{i-1}{n-j},
\]
for $1 \le i \le n$ and $1 \le j \le n$. In particular, for $i=n$ we have 
\[
m_{n,j}= p^{j-1}(1-p)^{n-j}\binom{n-1}{j-1},\quad 1\le j\le n.
\]
\end{prop}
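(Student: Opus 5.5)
We will use the description from the example: cards $1,\dots,n-1$ receive independent labels $\varepsilon_j\sim\mathrm{Bernoulli}(p)$, and card $n$ carries no label. In the shuffled deck the $\up$ cards come first in increasing order, then card $n$, then the $\down$ cards in decreasing order. The plan is to write the position of card $i$ explicitly in terms of these labels and then read off its distribution.

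First consider card $i<n$ with $\varepsilon_i=1$. Its position is one plus the number of $\up$ cards among $1,\dots,i-1$. So it lands in position $j$ exactly when $j-1$ of the $i-1$ smaller cards are $\up$. By independence this has probability $p\cdot\binom{i-1}{j-1}p^{j-1}(1-p)^{i-j}$, which is the first term of the formula.

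Next consider card $i<n$ with $\varepsilon_i=0$. The $\down$ cards occupy the bottom of the deck in decreasing order, so card $i$ sits just above all smaller $\down$ cards. Its position is therefore $n$ minus the number of $\down$ cards among $1,\dots,i-1$. Setting $j=n-k$, where $k$ is that number of $\down$ cards, the probability is $(1-p)\binom{i-1}{k}(1-p)^{k}p^{i-1-k}$ with $k=n-j$. This equals $p^{i-1-(n-j)}(1-p)^{n-j+1}\binom{i-1}{n-j}$, the second term.

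Adding the two disjoint cases gives $m_{i,j}$ for $i<n$. Out-of-range binomials vanish automatically, so no case restrictions on $j$ are needed.

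Card $n$ needs separate treatment. It sits in position $1+\#\{\text{$\up$ cards among }1,\dots,n-1\}$, and this count is Binomial$(n-1,p)$, which gives $m_{n,j}=\binom{n-1}{j-1}p^{j-1}(1-p)^{n-j}$.

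The general formula should not be applied blindly at $i=n$, since card $n$ is unlabeled. Plugging $i=n$ into the two terms gives $p^{j}(1-p)^{n-j}\binom{n-1}{j-1}+p^{j-1}(1-p)^{n-j+1}\binom{n-1}{n-j}$. Using $\binom{n-1}{n-j}=\binom{n-1}{j-1}$, this sum equals $\binom{n-1}{j-1}p^{j-1}(1-p)^{n-j}\,(p+(1-p))$, which is the direct answer. So the formula holds for all $1\le i\le n$. Equivalently, one can give card $n$ a dummy label, since its position does not depend on it.

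The main obstacle is the position bookkeeping in the $\down$ case, where off-by-one errors are easy. I would check it on the $p=0.7$ example: card $3$ is $\down$ with one smaller $\down$ card (card $1$), so its position should be $20-1=19$, matching the deck shown.
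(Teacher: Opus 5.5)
Your proposal is correct and follows essentially the same route as the paper: condition on whether card $i$ itself goes to the top or the bottom, then count how many of the $i-1$ smaller cards (which are dealt after it) go up, respectively down, which produces the two binomial terms. Your explicit check that the two-term formula reduces to the $i=n$ formula, via $\binom{n-1}{n-j}=\binom{n-1}{j-1}$ and $p+(1-p)=1$, is a point the paper leaves implicit; there card $n$ just makes a dummy top/bottom choice on an empty pile.
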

Here and throughout, we use the convention that $\binom{0}{0} = 1$ and $\binom{a}{b}=0$ if $a<b$. It is easy to check that $m$ is a doubly stochastic matrix and $m_{i,n-j+1} = m_{i,j}$ for all $i,j$. 
\begin{proof}
Immediately before selecting the card labelled $i$, we have exactly $n-i$ cards in the pile. The $i$th card is placed at the bottom with probability $1-p$
or the top of the pile with probability $p$. If the card is placed at the top, we select $j-1$ cards from the remaining $i-1$ cards
and place them at the top, leading to a common factor $p^{j}(1-p)^{i-j}$ and the number of ways to choose the $j-1$ cards. If the card is placed at the bottom, we may choose $n-j$ cards from the remaining $i - 1$ cards and place them at the bottom. This gives a common factor $p^{i-1-(n-j)}(1-p)^{n-j+1}$, which takes into account that card $i$ went to the bottom, as well as $n-j$ cards too, and the remaining cards to the top. As these events are disjoint, the result readily follows.
\end{proof} 
For concreteness, we formally describe the complete feedback game. Throughout, a shuffled deck will mean a deck of cards shuffled once by an asymmetric single-shelf shuffle with parameter $p$. 
\begin{definition}[Complete feedback game]
Consider a deck of shuffled cards. A player guesses the cards sequentially from the top of the deck. After each guess, the player is shown the card, and the card is removed from the deck. The objective of the player is to maximize the expected number of correct guesses, and a strategy that maximizes the expected number of correct guesses is called an \emph{optimal strategy}. Unless stated otherwise, we will always assume that cards are guessed by an optimal guessing strategy. 
\end{definition}

Based on extensive numerical simulations,~\cite{DiaconisFulmanHolmes2013} formulated several conjectures about the optimal strategy and the asymptotics of the number of correct guesses for the symmetric shelf shuffle. Rigorous investigation in this direction was recently initiated by the first author ~\cite{Clay2025a}, who proved the conjectured optimal strategy for complete feedback games and further showed that under the optimal strategy, the expected number of correct guesses is $3n/4$. 

We now describe the optimal strategy in a complete feedback game after a \emph{symmetric shelf shuffle} from \cite[Theorem 1.4]{Clay2025a}. Let $T$ be the first time the card $n$ or $n-1$ is revealed; in other words, $T$ is the minimum between the location of card $n$ and $n-1$ in the shuffled deck. It is easy to see that after time $T$, the player should guess the remaining cards in decreasing order. On the first step, guessing card $1$ maximizes the probability of the guess being correct. For $2\leq t\leq T$, the player should guess the card with label $X_{t-1}+1$. That is, the player should guess the card immediately following the previously shown cards until one sees the card with label $n$ or $n-1$. 
\begin{example}[Optimal strategy for the symmetric shelf shuffle]
Take $n = 20$ and suppose the shuffled deck is: 

\begin{center}
\begin{tikzpicture}[x=0.6cm,y=0.6cm]
\foreach \i [count=\n from 1] in {1,2,5,10,11,19,20,18,17,16, 15,14,13,12,9,8,7,6,4,3} {
  \draw (\n-1,0) rectangle (\n,1);
  \node at (\n-0.5,0.5) {\i};
}
\end{tikzpicture}
\end{center}
The player guesses the card $1$ at the first position (and gets it correct). Then the player guesses $2$ and $3$ on the next two cards, respectively. Note that the third guess is incorrect. When this card is revealed to be $5$, on the next card the player should guess $6$. Continuing this way, the player guesses $1, 2, 3, 6, 11, 12$. At this step, the revealed card is $19$. From here on, the player guesses the remaining cards in decreasing order. Thus, the full sequence of the card guesses in this case is: 
\begin{center}
\begin{tikzpicture}[x=0.6cm,y=0.6cm]
\foreach \i [count=\n from 1] in {1,2,3,6,11,12,20,18,17,16, 15,14,13,12,9,8,7,6,4,3} {
  \draw (\n-1,0) rectangle (\n,1);
  \node at (\n-0.5,0.5) {\i};
}
\end{tikzpicture}
\end{center}
One can calculate by matching the two diagrams that the player makes $3+14=17$ correct guesses. The last $14$ cards (after the card $19$ was revealed) are guaranteed correct guesses.  
\end{example}

For $\frac{1}{2}\le p<1$, the optimal strategy stays the same as in the case of $p=1/2$. If $p<\frac{1}{2}$, the structure is asymptotically similar, but the optimal strategy changes. It is evident that when $p<1/2$, the card $1$ is not necessarily the most likely card to appear on top of the shuffled deck. In particular, we do not always guess card $1$ on the first round. In fact, it follows from Proposition~\ref{prop:posi} (see Lemma~\ref{lem:firstcard}) that the most likely card in position $1$ is card $1$ or card $n$ with probability $p$ and $(1-p)^{n-1}$, respectively. Therefore, the solution to the equation $p=(1-p)^{n-1}$ allows us to distinguish which label appears with the highest probability on the next step. This also suggests that the first guess (and therefore optimal strategy) need not be unique. 

We now explain the optimal guessing strategy. Let $\fc_n$ denote the first card in the shuffled deck. At the first step, one should guess the label $j$ which maximizes the probability $\mathbb{P}(\fc_n=j)$. Suppose that the first card is revealed and $\fc_n=t$. Note that this means that the cards $1,\ldots, t-1$ appear at the bottom of the shuffled deck in decreasing order. These $(t-1)$ cards will eventually be guessed correctly with probability $1$ between locations $n-t+2$ and $n$. Furthermore, on the event $\fc_n=t$, the cards with labels $t+1, t+2, \ldots, n$ appear between positions $2$ and $n-t+1$ and (up to relabelling these cards) have the same distribution as an asymmetric single-shelf shuffle with $(n-t)$ cards. We can proceed inductively, and this leads to the following optimal strategy for guessing in the complete feedback game.

\begin{proposition}[Optimal strategy for asymmetric shelf shuffle]
\label{prop:Strategy}
For $p\in[\frac12,1]$, we proceed as in the case $p=\frac12$: we guess the card labelled $1$ as the first guess and then always guess the card immediately following the previously shown card in all other positions until the card $n-1$ or $n$ is shown. After that, the player should guess the remaining cards in descending order, and is guaranteed to get these correct.

For $p\in (0,\frac{1}{2})$, let $\nu=\lfloor \ln(p)/\ln(1-p)\rfloor+1$. We guess the label 
\[
\maxP_n= \arg\max_{j}\mathbb{P}(\fc_n=j) = 
\begin{cases}
n,\quad 2\le n\le \nu,\\
1,\quad n>\nu.
\end{cases}
\]
as the first card, breaking ties by choosing $n$. After the card, say labelled $j$, is shown to the guesser, we proceed similarly, but with the reduced set of indices $\{j+1,\dots,n\}$ and guess the label from $\{j+1,n\}$ corresponding to the maximum of $\{p,(1-p)^{n-j-1}\}$ until one sees the card $n$ or $n-1$. After that, the player should guess all the remaining cards in decreasing order.
\end{proposition}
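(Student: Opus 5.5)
The plan is to establish two facts and combine them with the standard fact that, under complete feedback, the greedy strategy (always guess the most likely next card given everything seen so far) maximizes the expected number of correct guesses. This holds because the expected number of correct guesses is the sum over $t$ of $\P(\text{guess}_t=\text{card}_t)$. The revealed information does not depend on the guesses, so each term is maximized separately by guessing the conditional mode. It therefore suffices to identify the conditional mode at every step.

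The first fact is the regeneration property described before the statement. Conditioned on the first $k$ revealed cards $x_1,\dots,x_k$, with none of them equal to $n-1$ or $n$, the deck has a rigid form. Since the deck is unimodal and we are still on the increasing part, $x_1<\dots<x_k<n-1$, and all labels below $x_k$ not yet seen lie at the bottom in decreasing order. The cards $x_k+1,\dots,n$ occupy the next positions. Their arrangement is that of an asymmetric single-shelf shuffle of $m=n-x_k$ cards, since their $\varepsilon$'s are independent of the conditioning event, which only involves $\varepsilon_j$ for $j\le x_k$. So the conditional law of the next card is $\P(\fc_m=\cdot)$ shifted by $x_k$. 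Once card $n-1$ or $n$ appears, the rest of the deck is decreasing and fully determined: if $n$ appeared, the remaining cards are the unseen ones in decreasing order. If $n-1$ appeared first, the next card must be $n$, because $n-1$ can only precede $n$ when it is up, and then it is immediately before $n$. The remaining cards follow in decreasing order. In both cases the guesser is then correct with probability $1$.

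The second fact is the computation of $\P(\fc_m=j)$, which is Proposition~\ref{prop:posi} with $j=1$ (cf.\ Lemma~\ref{lem:firstcard}). We have $m_{i,1}=p(1-p)^{i-1}$ for $i<m$ (all earlier cards down, card $i$ up) and $m_{m,1}=(1-p)^{m-1}$. For $i<m$ these probabilities are decreasing in $i$, so the maximum over $j$ is $\max\{p,(1-p)^{m-1}\}$, attained at $j=1$ or $j=m$. If $p\ge 1/2$ then $(1-p)^{m-1}\le 1-p\le p$, so card $1$ is optimal for $m\ge2$, with a tie at $m=2$, $p=1/2$. After relabelling, this gives "guess $x_k+1$". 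If $p<1/2$, the inequality $(1-p)^{m-1}\ge p$ is equivalent to $m-1\le \ln p/\ln(1-p)$. This gives the threshold $\nu$, with ties broken toward $n$, and after the shift by $x_k$ it gives the rule "maximum of $p$ and $(1-p)^{n-j-1}$".

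The main obstacle is making the greedy-optimality and conditioning argument airtight. Two points need care. First, one must check that the conditional law of the unseen portion is exactly a fresh shelf shuffle. This requires the measurability remark that the observed prefix is a function of $\varepsilon_1,\dots,\varepsilon_{x_k}$ only, together with the observation that the first card among $x_k+1,\dots,n$ is the top of that sub-shuffle. Second, one must handle the boundary cases $m=1,2$ and ties, where "optimal" means one of possibly several optimal strategies.
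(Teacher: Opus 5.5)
Your proposal is correct and follows the paper's own reasoning. It combines the regeneration property after each revealed card (the discussion preceding the proposition) with the first-card law of Lemma~\ref{lem:firstcard}, and compares $p$ with $(1-p)^{m-1}$ to obtain the threshold $\nu$. The one thing you add is an explicit justification of why guessing the conditional mode at each step is optimal, namely $\mathbb{E}[X_n]=\sum_t \mathbb{P}(\text{guess}_t=\text{card}_t)$ with the revealed cards not depending on the guesses; the paper takes this step for granted.
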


\begin{example}[Optimal strategy for $p=0.20$ when $n<\nu$]
For $p=0.20$, we have $q=0.80$ and
\[
\nu=\left\lfloor\frac{\log(0.20)}{\log(0.80)}\right\rfloor+1=8.
\]
Take $n=6<\nu$, and suppose that the shelf word is
\[
(\varepsilon_1,\ldots,\varepsilon_5)=(0,0,1,0,0).
\]
Thus, the shuffled deck, shown from top to bottom, is
\begin{center}
\begin{tikzpicture}[x=0.6cm,y=0.6cm]
\foreach \i [count=\n from 1] in {3,6,5,4,2,1} {
  \draw (\n-1,0) rectangle (\n,1);
  \node at (\n-0.5,0.5) {\i};
}
\end{tikzpicture}
\end{center}
Since $n<\nu$, the optimal first guess is $6$. After the card $3$ is revealed, the reduced deck still has size at most $\nu$, so the player again guesses $6$. Once $6$ is revealed, all remaining cards are guessed in decreasing order. The full sequence of guesses is
\begin{center}
\begin{tikzpicture}[x=0.6cm,y=0.6cm]
\foreach \i [count=\n from 1] in {6,6,5,4,2,1} {
  \draw (\n-1,0) rectangle (\n,1);
  \node at (\n-0.5,0.5) {\i};
}
\end{tikzpicture}
\end{center}
Thus, the player makes $5$ correct guesses.
\end{example}

\begin{example}[Optimal strategy for $p=0.20$ when $n>\nu$]
Again let $p=0.20$, so $\nu=8$, and now take $n=12>\nu$. Suppose that
\[
(\varepsilon_1,\ldots,\varepsilon_{11})=(1,1,0,0,1,0,1,0,0,0,0).
\]
The corresponding shuffled deck, shown from top to bottom, is
\begin{center}
\begin{tikzpicture}[x=0.6cm,y=0.6cm]
\foreach \i [count=\n from 1] in {1,2,5,7,12,11,10,9,8,6,4,3} {
  \draw (\n-1,0) rectangle (\n,1);
  \node at (\n-0.5,0.5) {\i};
}
\end{tikzpicture}
\end{center}
Since $n>\nu$, the optimal first guess is $1$. While the reduced deck has size greater than $\nu$, the player guesses the immediate successor of the previously revealed card; once the card $5$ is revealed, the reduced deck has size $7$, and the strategy switches to guessing $12$ until $12$ appears. The full sequence of guesses is
\begin{center}
\begin{tikzpicture}[x=0.6cm,y=0.6cm]
\foreach \i [count=\n from 1] in {1,2,3,12,12,11,10,9,8,6,4,3} {
  \draw (\n-1,0) rectangle (\n,1);
  \node at (\n-0.5,0.5) {\i};
}
\end{tikzpicture}
\end{center}
Thus, the player makes $10$ correct guesses.
\end{example}

As there are several results, we find it convenient to first give a summary before we set up the necessary notations and state our results formally. 

\subsection*{Summary of our results}
To the best of our knowledge, this is the first work addressing the complete feedback game for the asymmetric shelf shuffle. We provide a comprehensive analysis of the $X_n$, the number of correct guesses under an optimal strategy. 

In Theorem~\ref{thm:MeanAndVariance}, we provide the exact and explicit formulae for the mean and the variance of $X_n$. Theorem~\ref{thm:DistributionXn} gives an explicit description of the law of $X_n$. We also prove that the random variable $X_n$ is log-concave for all $p\in (0, 1)$ and $n\geq 1$ (Theorem~\ref{thm:logConcave}). Also, it is shown that (with suitable centering and scaling), the random variable $X_n$ satisfies a central limit theorem (CLT) in Theorem~\ref{thm:CLT} and a local CLT in Theorem~\ref{thm:LocalCLT}. We also prove a large deviation principle for $X_n/n$ with speed $n$ and an explicit good rate function in Theorem~\ref{thm:LDP}. Finally, we establish two curious phase transition phenomena for $X_n=X_n(p)$ when $p$ is near $1$ and near $0$.

\subsection*{Notation}
For sequences $a_n$ and $b_n$, we write $a_n\ll b_n$ if $a_n/b_n\to 0$ as $n\to\infty$. We denote by $\Var(X)$ the variance of a random variable $X$. For random variables $X$ and $Y$, we write $X\law Y$ if $X$ and $Y$ are equal in law. An indicator is denoted by $\mathbb{I}(\cdot)$. Given a sequence $a_n$, we say that $a_n=O(f(n))$ if there exists a constant $C>0$ such that $|a_n|\leq Cf(n)$ for all large $n$. Sometimes, we will write $a_n=O_t(f(n))$ to emphasize that the constant $C$ depends on a parameter $t$.

\subsection{Setup and Main results}
Unless stated otherwise, we only consider the asymmetric shelf shuffle with a single shelf, that is, $m=1$. When $p\in \{0, 1\}$, the shuffle is deterministic. To avoid trivialities, we always assume that $p\in (0, 1)$, unless stated otherwise. We always assume that the player employs the optimal guessing strategy (described in Proposition~\ref{prop:Strategy}) for guessing. For $p\in (0, 1)$, we use $X_n\equiv X_n(p)$ to denote the total number of correct guesses (under an optimal strategy) in the asymmetric case. We also define
\begin{equation}
\label{eq:defofnu}
q:=1-p, \qquad \nu:= \begin{cases}
1, & p\ge \frac{1}{2},\\
\left\lfloor \frac{\log p}{\log(1-p)}\right\rfloor+1, & 0<p<\frac{1}{2}
\end{cases}.
\end{equation}
\subsubsection{Expectation and variance}
Our first main result is the determination of the \emph{exact} mean and variance of the number of correct guesses. 

\begin{theorem}
\label{thm:MeanAndVariance}
We show that the expected number of correct guesses $X_n$ is given by
\[
\mathbb{E}(X_n)=\begin{cases}
qn+p, & 1\le n\le \nu,\\
(1-pq)n+2p-(\nu+1)p^2-q^\nu, & n\ge \nu+1,
\end{cases}
\]
and the variance $\Var(X_n)$ of $X_n$ is
\[
\begin{cases}
(n-1)pq, & 1\le n\le \nu,\\[1mm]
(\nu-1)pq+\bigl(1-2(\nu-1)p\bigr)q^\nu-q^{2\nu}, & n=\nu+1,\\[1mm]
pq(1-3pq)\,n-2pq+(3\nu+5)p^2q^2+\bigl(1-2\nu p+2p^2\bigr)q^\nu-q^{2\nu},
& n\ge \nu+2.
\end{cases}
\]  
\end{theorem}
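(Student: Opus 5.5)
By the block decomposition described before Proposition~\ref{prop:Strategy}, I would first turn the optimal strategy into a distributional recursion for $X_n$, and then extract the first two moments from it. Condition on the first card $\fc_n=t$. From the description of the shuffle through the labels $\varepsilon_j$:
\begin{itemize}
\item $\P(\fc_n=t)=q^{t-1}p$ for $1\le t\le n-1$, and $\P(\fc_n=n)=q^{n-1}$.
\item On $\{\fc_n=t\}$ with $t<n$, the cards $1,\dots,t-1$ lie at the bottom in decreasing order and are guessed correctly with certainty.
\item The cards $t+1,\dots,n$ form, after relabelling, an independent single-shelf shuffle of $n-t$ cards.
\end{itemize}
With $X_0:=0$ and $X_{n-t}'$ an independent copy of $X_{n-t}$, this gives
\[
X_n \law \mathbb{I}(\fc_n=\maxP_n) + (\fc_n-1) + X'_{n-\fc_n} .
\]
This is consistent with the rule ``stop once $n$ or $n-1$ is seen'', since $X_1=1$.

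\emph{Small decks, $n\le\nu$.} Here the guess is always the largest remaining label. A guess is therefore incorrect exactly when the revealed card is an $\up$-card, i.e.\ some $j\le n-1$ with $\varepsilon_j=1$. Iterating the recursion, or reading this off directly, gives
\[
X_n = 1+\#\{j\le n-1:\varepsilon_j=0\}\law 1+\mathrm{Bin}(n-1,q).
\]
Hence $\E X_n=qn+p$ and $\Var X_n=(n-1)pq$, which is the first line of both formulas. The same conclusion holds for $p\ge 1/2$, where $\nu=1$.

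\emph{Large decks, $n\ge\nu+1$.} The recursion becomes a renewal-type equation
\[
P_n(v)=pv\,P_{n-1}(v)+\sum_{t=2}^{n-1}pq^{t-1}v^{t-1}P_{n-t}(v)+q^{n-1}v^{n-1},\qquad P_n(v)=\E v^{X_n}.
\]
The boundary values $P_m(v)=v\,(p+qv)^{m-1}$ for $1\le m\le\nu$ come from the small-deck case. I would pass to $F(z,v)=\sum_n P_n(v)z^n$, which should be rational in $z$: the kernel is $1-pvz-\frac{pqvz^2}{1-qvz}$, plus a polynomial correction of degree about $\nu$ from the initial segment.

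Then I would differentiate once and twice at $v=1$. This gives linear recurrences for $\mu_n=\E X_n$ and for $\E X_n(X_n-1)$ with constant coefficients (the kernel at $v=1$), driven by known forcing terms. Equivalently, write $X_n=n-W_n$, where $W_n$ counts wrong guesses: one for each $\down\up$ pattern while the remaining deck has size $>\nu$, plus one for each $\up$ once the size drops to at most $\nu$. Solving these recurrences gives linear growth in $n$ with slopes $1-pq$ and $pq(1-3pq)$, as expected from the $\down\up$ counting. The constants come from matching at $n=\nu+1$, where the deck first enters the small regime; this matching produces the $q^\nu$, $q^{2\nu}$ and $(\nu+1)p^2$ terms.

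The exceptional line $n=\nu+1$ in the variance reflects that the second-moment recurrence needs two steps to reach its stationary linear form. I would therefore treat $n=\nu+1$ directly from the recursion, and use induction for $n\ge\nu+2$ (checking that the claimed formula satisfies the recurrence).

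\emph{Main obstacle.} The real difficulty is the variance constant. Combining the renewal structure with the binomial boundary block produces cross terms: the covariance between how many $\down\up$ patterns occur before the boundary and the random overshoot size (which lies in $\{1,\dots,\nu\}$) at which the small regime is entered. I would handle this by an explicit partial-fraction expansion of $\partial_v^2F(z,1)$ at its double pole $z=1$. The simple pole gives the linear term in $n$, and the contributions from the initial polynomial give the $q^\nu$ and $q^{2\nu}$ constants; geometrically small terms from the other root of the kernel should cancel exactly for $n\ge\nu+2$.
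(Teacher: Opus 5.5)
Your plan is correct and is essentially the paper's proof. The paper multiplies the $(n-1)$-recurrence by $qv$ and subtracts to get $s_n(v)=v\,s_{n-1}(v)+pq\,v(1-v)\,s_{n-2}(v)$ for $n\ge\nu+2$, which is exactly your reduced kernel $1-vz+pq\,v(v-1)z^2$. It then differentiates once and twice at $v=1$ to get $\mu_n=\mu_{n-1}+1-pq$ and, with $f_n=\E X_n(X_n-1)$, $f_n=f_{n-1}+2\mu_{n-1}-2pq(1+\mu_{n-2})$, seeding these with the explicit $s_{\nu+1}(v)$; the line $n=\nu+1$ is exceptional precisely because $\mu_{n-2}$ is still in the binomial regime there.

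One small correction to your ``main obstacle'': $\partial_v^2F(z,1)$ has a triple pole at $z=1$, not a double one, since $\E X_n(X_n-1)$ grows like $n^2$. Moreover, at $v=1$ the reduced kernel is just $1-z$, so no geometric terms arise at all, and the elementary recurrences make the partial-fraction step unnecessary.
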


For the symmetric shelf-shuffle (when $p=1/2$), we see that the expected number of correct guesses is $3n/4$ as soon as $n\geq 2$, and the variance is $n/16$ as soon as $n\geq 3$. We record the following corollary when $p=1/2$. 
\begin{corollary}[The mean and the variance in symmetric case]
When $p=1/2$, we have 
\[
\E[X_n] =\begin{cases}
    1, & n=1, \\
    \frac{3n}{4}, & n\geq 2
\end{cases},
\]
and 
\[
\Var(X_n) = \begin{cases}
    0, & n=1, \\
    \frac{1}{4}, & n=2, \\
    \frac{n}{16}, & n\geq 3\,.
\end{cases}
\]
\end{corollary}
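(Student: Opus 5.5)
The plan is to derive a distributional recursion for $X_n$ from the first revealed card, as described before Proposition~\ref{prop:Strategy}, and to compute the first two moments from it. The law of the first card comes from Proposition~\ref{prop:posi} with $j=1$. The first card is $t\in\{1,\dots,n-1\}$ exactly when $\varepsilon_1=\dots=\varepsilon_{t-1}=0$ and $\varepsilon_t=1$, which has probability $pq^{t-1}$. It is $n$ with probability $q^{n-1}$. On $\{\fc_n=t\}$ with $t\le n-1$, the cards $1,\dots,t-1$ sit at the bottom and are guessed correctly. The cards $t+1,\dots,n$ form an independent shelf shuffle of size $n-t$. Hence
\[
X_n \law \mathbb{I}(\text{first guess correct}) + (t-1) + X'_{n-t}\quad\text{on }\{\fc_n=t\},
\]
with $X'_{n-t}$ an independent copy. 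On $\{\fc_n=n\}$ we get $X_n = \mathbb{I}(\text{guess}=n) + (n-1)$. Here the guess is $1$ if $n>\nu$ and $n$ if $n\le \nu$, with the convention $\nu=1$ for $p\ge 1/2$. We also use $X_0=0$ and $X_1=1$. Note that after card $n-1$ is shown, everything remaining is forced, which is consistent with the recursion.

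In the regime $n\le\nu$ the guess is always $n$, so the only incorrect guesses occur while card $n$ has not yet appeared. Every card placed above $n$ is a wrong guess, and all others are correct. Therefore $X_n = n - \#\{j\le n-1:\varepsilon_j=1\}$, so $X_n = 1+\mathrm{Bin}(n-1,q)$. This gives mean $qn+p$ and variance $(n-1)pq$ immediately.

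For $n\ge \nu+1$ I would use the recursion, guessing $1$ first. The first guess is correct iff $t=1$. Let $a_n=\E X_n$, $b_n=\E X_n^2$, and $r = \sum_{t} p q^{t-1}\,\cdot$. Then
\[
a_n = p + \sum_{t=1}^{n-1} pq^{t-1}\bigl(t-1 + a_{n-t}\bigr) + q^{n-1}(n-1).
\]
Rather than solving this convolution recursion directly, I would pass to generating functions $A(z)=\sum a_n z^n$. The convolution with the geometric weights $pq^{t-1}$ becomes multiplication by $pz/(1-qz)$. The initial segment $n\le\nu$, where the recursion has a different correction term, enters only as a polynomial of degree $\le\nu$. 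Solving $A(z)=\text{rational}+ \frac{pz}{1-qz}A(z)$ gives a denominator $(1-z)^2$ plus terms in $(1-qz)^{-1}$. Extracting coefficients should give linear growth $(1-pq)n$ plus a constant, with no geometric remainder for $n\ge \nu+1$.

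An alternative, cleaner route is a direct combinatorial description. After the initial phase (while the remaining deck has size $\le \nu$), the run is a sequence of geometric blocks. Each block ``$\varepsilon=0\cdots0,1$'' contributes correct guesses equal to its length, minus one unless the block has length $1$. The first $n-\nu$ or so steps are then a renewal process. Either way, I would verify the constant $2p-(\nu+1)p^2-q^\nu$ by checking $n=\nu+1$ against the Binomial formula plus one step. The variance follows the same scheme with $b_n$, whose recursion involves $a_{n-t}$ cross terms. This produces $(1-pq)$-type linear terms squared, leading to the slope $pq(1-3pq)$.

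The main obstacle is bookkeeping at the boundary. The recursion switches form at size $\nu$, so $n=\nu+1$ is special for the variance: there $a_{n-t}$ terms with $n-t\le\nu$ still use the binomial formula but the cross terms have not stabilized. I would handle this by computing $n=\nu+1$ separately and then proving the $n\ge\nu+2$ formula by induction. The induction works because the differenced recursion $c_n - qc_{n-1}$ telescopes the geometric convolution into a first-order recursion. Finally, I would check $p=1/2$, $\nu=1$ against the corollary ($3n/4$, $n/16$) as a sanity test.
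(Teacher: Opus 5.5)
Your plan is workable, but it is not the paper's route. In the paper the corollary is simply the substitution $p=q=\tfrac12$, $\nu=1$ into Theorem~\ref{thm:MeanAndVariance}. Then $pq=\tfrac14$, $1-3pq=\tfrac14$, and both constants $2p-(\nu+1)p^2-q^\nu$ and $-2pq+(3\nu+5)p^2q^2+(1-2\nu p+2p^2)q^\nu-q^{2\nu}$ vanish. So the ``sanity test'' at the end of your plan is the whole proof once that theorem is available.

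The theorem itself is proved at the level of the probability generating function. Multiplying the $(n-1)$-st instance of the recursion for $s_n(v)$ by $qv$ and subtracting gives the linear recurrence $s_n(v)=v\,s_{n-1}(v)+pq\,v(1-v)\,s_{n-2}(v)$ for $n\ge\nu+2$. One and two derivatives at $v=1$ then give $\mu_n-\mu_{n-1}=1-pq$ and $f_n=f_{n-1}+2\mu_{n-1}-2pq(1+\mu_{n-2})$, with base values read off the explicit $s_{\nu+1}(v)$. Your differencing $c_n-qc_{n-1}$ is the moment-level analogue of this step. For the mean it works as you describe. In fact the factors $1-qz$ cancel completely: $A(z)=\frac{pz+q^2z^2}{(1-z)^2}+\frac{(1-qz)P(z)}{1-z}$ with $P(z)=\sum_{n=1}^{\nu}(q^{n-1}-p)z^n$. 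This gives exactly $(1-pq)n+2p-(\nu+1)p^2-q^\nu$ for $n\ge\nu+1$, with no $(1-qz)^{-1}$ terms at all.

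For the second moment, differencing does less than you claim, though no new idea is missing. It removes $\sum_t pq^{t-1}b_{n-t}$. However, the cross term $2\sum_{t}pq^{t-1}\max(t-1,1)\,a_{n-t}$ has non-geometric weights. So $b_n-b_{n-1}$ still contains $2p\sum_{t=3}^{n-1}q^{t-1}a_{n-t}$, together with $q^n$-terms coming from $\E[\max(\fc_n-1,1)^2]$. These cancel only after you insert the explicit $a_m$, including $a_m=qm+p$ for $m\le\nu$; at $p=\tfrac12$ one gets $b_n-b_{n-1}=\frac{9n-4}{8}$ for $n\ge4$. This cross-term bookkeeping is exactly what the PGF route avoids, since its recurrence is linear in $s$.

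There are also two boundary values, not one. One has $\Var(X_{\nu+2})-\Var(X_{\nu+1})=pq(1-3pq)+2pq(pq-q^\nu)$, which is not $pq(1-3pq)$ in general; at $p=\tfrac12$, $\Var(X_2)=\tfrac14$ but $\Var(X_3)=\tfrac{3}{16}$. So $n=\nu+2$ must also be computed directly, and the constant increment starts only at $n=\nu+3$.

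Finally, for $p=\tfrac12$ your combinatorial route is the shortest of all. (Note that the renewal phase is the one where the remaining deck has size $>\nu$, not $\le\nu$.) By Corollary~\ref{cor:zerorunsdistribution}(1), $n-X_n=\sum_{j=1}^{n-1}I_j$ with $I_j=\mathbb{I}(\varepsilon_j=0,\ \varepsilon_{j-1}=1)$ and $\varepsilon_0:=1$. Here $\E I_1=\tfrac12$ and $\E I_j=\tfrac14$ for $j\ge2$. Adjacent indicators are disjoint and non-adjacent ones are independent. Hence $\E(n-X_n)=\tfrac{n}{4}$ for $n\ge2$, $\Var(X_2)=\Var(I_1)=\tfrac14$, and $\Var(n-X_n)=\tfrac14+\tfrac{3(n-2)}{16}-\tfrac14-\tfrac{n-3}{8}=\tfrac{n}{16}$ for $n\ge3$.
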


\subsubsection{Distribution of \texorpdfstring{$X_n$}{Xn}}
Perhaps surprisingly, we find the explicit distribution of $X_n$. 
\begin{theorem}[Exact distribution of $X_n$]
\label{thm:DistributionXn}
Let $\pi_{n,k}:=\mathbb{P}(X_n=k)$. Then, the following hold:
\begin{enumerate}
\item If $p\ge \frac{1}{2}$, then 
\[
\pi_{n,k}
=
\sum_{m=n-k}^{k}
\binom{m-1}{n-k-1}\binom{n-m}{n-k}\,q^m p^{\,n-1-m},
\]
for $\lceil n/2\rceil\le k\le n-1$ and $\pi_{n,n}=p^{\,n-1}$. And, $\pi_{n,k}=0$ otherwise.

\item If $0<p<\frac12$ and $1\le n\le \nu$, then
\[
\pi_{n,k}
=
\binom{n-1}{k-1}q^{\,k-1}p^{\,n-k},
\qquad 1\le k\le n,
\]
and $\pi_{n,k}=0$ otherwise.

\item If $0<p<\frac12$ and $n>\nu$, then
\[
\pi_{n,k}
=
p\,\pi_{n-1,k-1}
+\sum_{j=2}^{n-1}pq^{j-1}\pi_{n-j,k-j+1}
+q^{n-1}\mathbf{1}_{\{k=n-1\}},
\]
with the initial condition
\[
\pi_{m,k}
=
\binom{m-1}{k-1}q^{\,k-1}p^{\,m-k},
\qquad
1\le m\le \nu,\ \ 1\le k\le m.
\]
\end{enumerate}
\end{theorem}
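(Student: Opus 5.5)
The approach is to condition on the first card $\fc_n$ and use the recursive structure described just before Proposition~\ref{prop:Strategy}. If $\fc_n=t$ with $1\le t\le n-1$, then $\varepsilon_1=\dots=\varepsilon_{t-1}=0$ and $\varepsilon_t=1$. This event has probability $q^{t-1}p$. The cards $1,\dots,t-1$ then sit at the bottom in decreasing order, and by Proposition~\ref{prop:Strategy} they are all guessed correctly. The cards $t+1,\dots,n$ form an independent shelf shuffle of size $n-t$, and the optimal strategy restricted to them is again optimal. If $\fc_n=n$, which has probability $q^{n-1}$, the whole deck reads $n,n-1,\dots,1$ and all cards after the first are guessed correctly. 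Let $g\in\{1,n\}$ be the first guess. Then
\[
X_n \law \mathbb{I}(\fc_n=g) + (t-1) + X'_{n-t} \quad\text{on } \{\fc_n=t<n\},\qquad X_n=n-1+\mathbb{I}(g=n) \quad\text{on } \{\fc_n=n\},
\]
where $X'_{n-t}$ is an independent copy of $X_{n-t}$ and $X_0:=0$. A small boundary point must be checked: when $t=n-1$ the remaining deck has one card, and it is guessed correctly. This is consistent with $X_1=1$.

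\emph{Case (3) and the recursion.} For $p<\frac12$ and $n>\nu$ we have $g=1$. The term $t=1$ gives $p\,\pi_{n-1,k-1}$. For $2\le t\le n-1$ the first guess is wrong, so we get $pq^{t-1}\pi_{n-t,k-t+1}$. The term $t=n$ gives $q^{n-1}\mathbf{1}_{\{k=n-1\}}$. This is exactly item (3). We must also confirm that the sub-strategy used on the reduced deck of size $n-t$ is governed by the same rule, namely $g=1$ or $g=n$ according to whether $n-t>\nu$. This is precisely how Proposition~\ref{prop:Strategy} is phrased, so the recursion is immediate.

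\emph{Case (2).} For $n\le\nu$ the guess is always the current top label $n$. Hence all cards after $n$ is revealed are correct, card $n$ itself is correct, and every card before $n$ is wrong. If card $n$ sits at position $j$, then $X_n=n-j+1$. Proposition~\ref{prop:posi} gives $\P(\text{pos}(n)=j)=\binom{n-1}{j-1}p^{j-1}q^{n-j}$. Substituting $k=n-j+1$ yields item (2).

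\emph{Case (1).} This is the main obstacle, because here we need a closed form rather than a recursion. With $g=1$ always, the same conditioning gives the recursion $\pi_{n,k}=p\pi_{n-1,k-1}+\sum_{t=2}^{n-1}pq^{t-1}\pi_{n-t,k-t+1}+q^{n-1}\mathbf{1}_{\{k=n-1\}}$ for all $n\ge2$. I would solve it with the bivariate generating function $F(z,u)=\sum_{n,k}\pi_{n,k}z^nu^k$. The recursion becomes a linear equation of the form $F=zu+pzuF+\frac{pq\,z^2u\,F}{1-qzu}+(\text{boundary term})$, and it can be solved rationally. I would then extract coefficients and verify that they match the stated double-binomial sum.

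An alternative, more combinatorial route reads $X_n$ off the shelf word. Wrong guesses occur exactly at the first positions of the maximal runs of down labels that precede the first up-labelled block, up to the boundary effect at cards $n-1$ and $n$. So $n-X_n$ counts certain runs of $0$'s. Counting binary words with $m$ zeros and $n-k$ such runs then gives the factor $\binom{m-1}{n-k-1}$ (compositions of the zeros into runs) times $\binom{n-m}{n-k}$ (placing the runs among the ones), with weight $q^mp^{n-1-m}$. I expect the delicate part to be the bookkeeping for the final block involving cards $n-1$ and $n$, which is what produces the ranges $m\ge n-k$ and $m\le k$. I would check the result against the generating-function coefficients and small $n$.
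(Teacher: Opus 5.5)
\textbf{Parts (2) and (3).} Your treatment of (3) is the paper's argument: the first-card recursion of Theorem~\ref{thm:DistAssymetric}, followed by coefficient extraction from \eqref{eq:asym-rec-large}. Your bivariate generating function for $p\ge\frac12$ coincides with Corollary~\ref{cor:asym-bgf-upper}.

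For (2) you argue differently from the paper. The paper simply checks that $v(p+qv)^{n-1}$ solves \eqref{eq:asym-rec-small}. You instead note that every reduced deck then has size $s\le\nu$. Since $q^{s-1}\ge q^{\nu-1}\ge p$, with ties broken toward the largest label, the guess is always the largest remaining label. This gives $n-X_n=\sum_{j\le n-1}\varepsilon_j$, and the law follows from $m_{n,j}$. That argument is valid and more transparent; it is exactly Corollary~\ref{cor:zerorunsdistribution}(2).

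\textbf{Part (1).} You have the right ingredients, and the paper ends with the same word count. However, the combinatorial claim you rely on is misstated, and the bridge from the generating function to the double sum is not supplied.

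The wrong guesses do not come from runs ``that precede the first up-labelled block'', nor from ``certain'' runs. The exact identity is $n-X_n=R_0(\varepsilon_1,\dots,\varepsilon_{n-1})$, counting every maximal zero-run. To see this, let the ones sit at $u_1<\dots<u_r$, so the deck begins $u_1,\dots,u_r,n$, and set $u_0=0$.
\begin{itemize}
\item At position $i\le r$ the guess is $u_{i-1}+1$. It fails exactly when a nonempty zero-run separates $u_{i-1}$ and $u_i$.
\item At position $r+1$ the guess is $u_r+1$. It fails exactly when there is a terminal run $u_r+1,\dots,n-1$, because then card $n$ appears instead.
\item All later guesses are correct.
\end{itemize}

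So there is no delicate bookkeeping at cards $n-1,n$. A terminal or initial run simply occupies one of the outer gaps among the $n-m$ gaps around the $n-1-m$ ones. The range $n-k\le m\le k$ is merely the support of $\binom{m-1}{n-k-1}\binom{n-m}{n-k}$.

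For your generating-function route, ``extract coefficients and verify'' still needs a mechanism. The paper's mechanism is to pass to $D_n=n-X_n$ via
\[
S(vz,v^{-1})=z\,\frac{1-q(1-v)z}{1-z+pq(1-v)z^2}.
\]
It then recognizes this as $z$ times the generating function of Bernoulli$(p)$ words in which $v$ marks zero-runs, coming from the sequence construction $(1\mid 0^{+}1)^{*}(\text{empty}\mid 0^{+})$. The same word count then finishes the proof.

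With either repair in place, your argument for (1) is complete.
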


As a corollary, we can identify the number of incorrect guesses as a deterministic function of the shelf word. Let $R_0(w)$ denote the number of zero-runs in a binary word $w$, and recall that $\varepsilon_j=1$ if card $j$ is placed at the top of the pile and $\varepsilon_j=0$ otherwise.

\begin{corollary}[Incorrect guesses as a function of the shelf word]
\label{cor:zerorunsdistribution}
The number of incorrect guesses admits the following description.
\begin{enumerate}
\item If $p\ge 1/2$, then
\[
n-X_n=R_0(\varepsilon_1,\ldots,\varepsilon_{n-1}).
\]
In particular, $n-X_n$ has the law of the number of zero-runs in a Bernoulli$(p)$ word of length $n-1$.
\item If $0<p<1/2$ and $1\le n\le \nu$, then
\[
n-X_n=\sum_{j=1}^{n-1}\varepsilon_j.
\]
In particular, $n-X_n \law \operatorname{Bin}(n-1,p)$.
\item If $0<p<1/2$ and $n>\nu$, set $a=n-\nu$ and
\[
\tau=\min\{j\in\{a,a+1,\ldots,n-1\}:\varepsilon_j=1\},
\]
with the convention that $\tau=n$ if the set is empty. Then
\[
n-X_n=R_0(\varepsilon_1,\ldots,\varepsilon_a)+\sum_{j=\tau+1}^{n-1}\varepsilon_j.
\]
The second term is interpreted as $0$ when $\tau=n$.
\end{enumerate}
\end{corollary}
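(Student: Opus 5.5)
The proof is a deterministic trace of the strategy of Proposition~\ref{prop:Strategy} on the deck produced by a fixed shelf word $(\varepsilon_1,\dots,\varepsilon_{n-1})$. By the description in the example preceding Proposition~\ref{prop:posi}, the deck (top to bottom) consists of the up-cards $U=\{j:\varepsilon_j=1\}$ in increasing order, then card $n$, then the down-cards in decreasing order.

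\textbf{Two observations used in every case.} First, once $n$ or $n-1$ is revealed, every later guess is correct. If $n$ is revealed, the rest of the deck is the down-cards in decreasing order. If $n-1$ is revealed first, then $\varepsilon_{n-1}=1$, so the rest is $n$ followed by the down-cards in decreasing order. In both cases this is exactly "the remaining labels in decreasing order". Second, after a card labelled $j$ has been revealed before $n$, the unseen cards form a reduced deck of size $n-j$. By the recursive description before Proposition~\ref{prop:Strategy}, the rule guesses $n$ precisely when $n-j\le \nu$, with ties going to $n$, and guesses $j+1$ otherwise. So it suffices to count misses among the up-cards and the single reveal of $n$ (or of $n-1$).

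\textbf{Case $p\ge 1/2$.} Here $\nu=1$, so the player guesses $1$ and then always the successor of the last revealed label. Reading the revealed labels $u_1<u_2<\dots$ of $U$ and then $n$, the guess for $u_1$ is $1$. It is wrong iff $\varepsilon_1=0$, i.e. iff the word begins with a zero-run. The guess for $u_{i+1}$ is $u_i+1$. It is wrong iff $u_{i+1}>u_i+1$, i.e. iff a zero-run sits strictly between two consecutive ones.

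The remaining reveal needs a subcase. If $\varepsilon_{n-1}=1$, then $n-1$ is the last up-card; it is covered by the previous sentence, and everything afterwards is correct. If $\varepsilon_{n-1}=0$, then $n$ is revealed after $\max U$ (or first, if $U=\emptyset$). The guess was $\max U+1\le n-1$, or $1$ if $U=\emptyset$, so this is a miss, and it corresponds exactly to the terminal zero-run. Every maximal zero-run thus produces exactly one miss and nothing else does, giving $n-X_n=R_0(\varepsilon_1,\dots,\varepsilon_{n-1})$.

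\textbf{Case $p<1/2$, $n\le\nu$.} Every reduced deck has size at most $\nu$, so the player always guesses $n$ until $n$ appears. Each up-card revealed is therefore a miss and $n$ itself is a hit, giving $n-X_n=\sum_j\varepsilon_j$, which is $\operatorname{Bin}(n-1,p)$.

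\textbf{Case $n>\nu$.} Put $a=n-\nu$. The successor rule is in force exactly while the last revealed label $j$ satisfies $j<a$, and at the start, since $n>\nu$.

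In phase one the revealed cards are the up-cards with labels $<a$, followed by $\tau$: the first up-card with label $\ge a$, or $n$ if there is none. Exactly as in the first case, the misses in this phase correspond to the maximal zero-runs of $\varepsilon_1,\dots,\varepsilon_{\tau-1}$. Since $\varepsilon_a=\dots=\varepsilon_{\tau-1}=0$, these are in bijection with the zero-runs of $\varepsilon_1,\dots,\varepsilon_a$. If $\tau>a$, the last run contains position $a$; if $\tau=a$, the word ends in a $1$. This gives the term $R_0(\varepsilon_1,\dots,\varepsilon_a)$. When $\tau=n$, the final miss is the reveal of $n$.

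In phase two, after $\tau<n$ is shown we have $n-\tau\le\nu$, so the player guesses $n$. Every later up-card $j>\tau$ is a miss, and after that all guesses are correct, including when $\tau=n-1$. This gives $\sum_{j>\tau}\varepsilon_j$.

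\textbf{Main obstacle.} The delicate step is the boundary bookkeeping in the last case. One has to check that the switch happens exactly at the card $\tau$ (which needs the tie convention and the threshold $n-j\le\nu$). One also has to check that the zero-run count over $\varepsilon_1,\dots,\varepsilon_{\tau-1}$ coincides with that over $\varepsilon_1,\dots,\varepsilon_a$ in the three situations $\tau=a$, $a<\tau<n$ and $\tau=n$.
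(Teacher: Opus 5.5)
Your proposal is correct and follows the same route as the paper: you trace the strategy of Proposition~\ref{prop:Strategy} deterministically on the deck determined by the shelf word. Misses in the successor phase are matched to maximal zero-runs, and misses in the guess-$n$ phase are matched to the later up-cards. Your explicit bookkeeping fills in steps the paper only asserts, namely the terminal zero-run when $\varepsilon_{n-1}=0$, and the identification of the zero-runs of $\varepsilon_1,\ldots,\varepsilon_{\tau-1}$ with those of $\varepsilon_1,\ldots,\varepsilon_a$ in the three cases $\tau=a$, $a<\tau<n$ and $\tau=n$.
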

Our next result shows that the distribution of $X_n$ is log-concave. Recall that a sequence of non-negative real numbers $(a_{k})_{k\geq 0}$ is said to be log-concave if 
\[
a_k^2\geq a_{k-1}a_{k+1}\, \quad \text{for all } k\geq 1\;.
\]
For a random variable $Y$ taking values in $\mathbb{N}$, we say that $Y$ is log-concave if the probability mass function $k\mapsto p(k):=\mathbb{P}(Y=k)$ is a log-concave sequence.
\begin{theorem}
\label{thm:logConcave}
For every $p\in (0, 1)$ and $n\geq 1$, the number of correct guesses $X_n\equiv X_n(p)$ is log-concave.
\end{theorem}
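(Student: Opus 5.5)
The plan is to translate log-concavity of $X_n$ into a statement about the generating polynomial of the number of incorrect guesses $Y_n:=n-X_n$, using Corollary~\ref{cor:zerorunsdistribution}. Reflecting the support ($k\mapsto n-k$) preserves log-concavity, so it suffices to treat $Y_n$. The main tool will be Newton's inequalities: if a polynomial with nonnegative coefficients has only real roots, then its coefficient sequence is log-concave with no internal zeros. The proof then splits into the three regimes of Corollary~\ref{cor:zerorunsdistribution}.

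\emph{Case $0<p<1/2$, $n\le\nu$.} Here $Y_n\law\operatorname{Bin}(n-1,p)$. Its generating function $(q+px)^{n-1}$ is real-rooted, so we are done immediately.

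\emph{Case $p\ge 1/2$.} Here $Y_n=R_0(\varepsilon_1,\dots,\varepsilon_{n-1})$, and we compute its generating function by a transfer matrix. Let $A_m(x)$ and $B_m(x)$ be the generating functions of $x^{R_0}$ over words of length $m$ ending in $1$ and in $0$, respectively. A new zero-run is started exactly when a $0$ follows a $1$ (or begins the word), so
\[
A_{m+1}=p(A_m+B_m),\qquad B_{m+1}=q(xA_m+B_m).
\]
The matrix $\begin{pmatrix}p&p\\ qx&q\end{pmatrix}$ has trace $1$ and determinant $pq(1-x)$. By Cayley--Hamilton, $P_m:=A_m+B_m$, as well as $A_m$ and $B_m$ separately, satisfy
\[
P_{m+1}=P_m-pq(1-x)P_{m-1}.
\]
This three-term recurrence has the form of the one for orthogonal polynomials: a positive multiple of $(x-1)$ times $P_{m-1}$. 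A standard Sturm-sequence induction then gives the following. All roots of $P_m$ are real and lie in $(-\infty,0]$, the coefficients being nonnegative. Moreover, the roots of $P_{m-1}$ interlace those of $P_m$. The induction checks signs at the roots of $P_{m-1}$: there $P_{m+1}=-pq(1-x)P_{m-1}+\dots$ reduces to $P_{m+1}=P_m$ up to the $P_{m-1}$ term. Alternatively one can evaluate $P_{m+1}$ at the roots of $P_m$, where $P_{m+1}=pq(x-1)P_{m-1}$ and $x-1<0$. Real-rootedness of $P_{n-1}$ together with Newton's inequalities yields log-concavity of $Y_n$.

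\emph{Case $0<p<1/2$, $n>\nu$.} Put $a=n-\nu$ and condition on $\varepsilon_a$.
\begin{itemize}
\item If $\varepsilon_a=1$, then $\tau=a$. The second term of Corollary~\ref{cor:zerorunsdistribution} is then $\operatorname{Bin}(\nu-1,p)$, independent of the first term.
\item If $\varepsilon_a=0$, the second term depends only on $\varepsilon_{a+1},\dots,\varepsilon_{n-1}$. Write $r=q+px$. Summing over the position of the first $1$, its generating function is
\[
W(x)=q^{\nu-1}+\sum_{i=1}^{\nu-1}q^{i-1}p\,r^{\nu-1-i}=q^{\nu-1}+\frac{r^{\nu-1}-q^{\nu-1}}{x}.
\]
\end{itemize}
With $A_a,B_a$ as above, the full generating function becomes
\[
G_n(x)=A_a(x)\,r^{\nu-1}+B_a(x)\,W(x).
\]

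The main obstacle is proving that $G_n$ is real-rooted, or at least has log-concave coefficients, because a sum of real-rooted polynomials need not be real-rooted. I would first try to establish a common-interlacer statement. The Sturm argument shows that $A_a$ and $B_a$ have interlacing roots. One would then show that $r^{\nu-1}$ and $W$ are compatible: $W$ is a positive combination of powers $r^j$ with $j\le\nu-1$, all sharing the single root $-q/p$, which should make $W$ and $r^{\nu-1}$ interlace with a common interlacer. A Chudnovsky--Seymour or Wagner-type lemma on sums of products of interlacing polynomials would then give real-rootedness of $G_n$.

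If this fails, the fallback is direct. Log-concavity of $A_a$, $B_a$, $r^{\nu-1}$ and $W$ holds coefficientwise, and products of log-concave sequences are log-concave under convolution. It would remain to show that the two summands are ``aligned'' in the sense of a ratio-monotonicity condition, so that their sum stays log-concave. The recursion in Theorem~\ref{thm:DistributionXn}(3) could be used to verify the relevant inequalities inductively in $n$.
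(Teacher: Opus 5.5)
Your treatment of $0<p<\tfrac12$, $n\le\nu$ (binomial) and of $p\ge\tfrac12$ is sound. For $p\ge\tfrac12$, proving real-rootedness of the zero-run polynomial via $P_{m+1}=P_m+pq(x-1)P_{m-1}$ is a different and stronger route than the paper's, which treats $p\ge\tfrac12$ as the $r=\nu-1=0$ instance of its general induction. The bookkeeping is as follows:
\begin{itemize}
\item $\deg P_m=\lceil m/2\rceil$ grows only at every second step;
\item the sign alternation of $P_{m+1}=pq(x-1)P_{m-1}$ at the roots of $P_m$ gives the interior roots;
\item $P_{m+1}(0)=p^{m+1}>0$ gives one root to the right of the largest root of $P_m$;
\item the sign at $-\infty$ supplies the extra root when the degree increases.
\end{itemize}

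The genuine gap is the case $0<p<\tfrac12$, $n>\nu$, which is the heart of the theorem and which you leave open. Your decomposition $G_n=A_a(q+px)^{\nu-1}+B_aW$ is exactly the paper's $d_n=F^{(1)}_{n-\nu}$. It uses the same transfer matrix $\bigl(\begin{smallmatrix}p&qx\\ p&q\end{smallmatrix}\bigr)$, and $W=\E[x^{(B_{\nu-1}-1)_+}]$ is the paper's $V_0$.

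Your first route cannot work, because real-rootedness fails:
\begin{itemize}
\item $W$ is not real-rooted: for $\nu=4$, $W=q^3+3pq^2+3p^2q\,x+p^3x^2$ has discriminant $-p^3q^2(3p+4q)<0$.
\item $G_n$ itself is not real-rooted: $p=0.3$ gives $\nu=4$, and $G_5(x)=(q+px)^3+q^4(x-1)=0.1029+0.6811x+0.189x^2+0.027x^3$ violates Newton's inequality $c_2^2\ge 3c_1c_3$ ($0.0357<0.0552$).
\end{itemize}
So no common-interlacer argument exists; only log-concavity survives.

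Moreover, any correct argument must use the defining inequality $q^{\nu}<p\le q^{\nu-1}$. With $r=\nu-1\ge 3$, the coefficients of $(q+px)^r+q^{r+1}(x-1)$ satisfy $c_2^2\ge c_1c_3$ iff $p\,r(r+1)\ge 2(r-2)q^2$. This fails if $p$ and $r$ are decoupled (e.g.\ $r=10$, $p=0.01$), so an argument treating them as independent parameters is hopeless.

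Your fallback (``aligned'' summands, ``ratio monotonicity'') names neither the alignment condition nor why it would be preserved. The paper supplies exactly this missing ingredient:
\begin{itemize}
\item \emph{Weak synchronization:} $A\sim_w B$ means $a_{k-1}b_{k+1}+a_{k+1}b_{k-1}\le 2a_kb_k$.
\item \emph{Closure:} by \cite[Theorem 2.7]{GMTW}, nonnegative combinations of pairwise weakly synchronized sequences are weakly synchronized, in particular log-concave.
\item \emph{Propagation:} one carries two relations through the recursion $F^{(1)}_{m+1}=pF^{(1)}_m+qxF^{(0)}_m$, $F^{(0)}_{m+1}=pF^{(1)}_m+qF^{(0)}_m$. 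The first is that the coefficients of $F^{(1)}_m$ are $\sim_w$ those of $F^{(0)}_m$. The second is that the coefficients of $F^{(1)}_m$ are $\sim_w$ the \emph{shifted} coefficients of $F^{(0)}_m$; the shift is forced by the factor $qx$.
\item \emph{Base case:} checked by hand at $m=0$, or at $m=1$ when $\nu=3$. For $r\ge 3$ the key inequality $p/q\ge 3/(2(r+1))$ is derived from $p>q^{r+1}$.
\end{itemize}
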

Log-concavity of a sequence entails several interesting and useful consequences. While we do not pursue this direction here, we refer the interested readers to~\cite{Pitman, SW14} for a glimpse of applications and consequences. 

\subsubsection{Limit laws}
Our next set of results describes the limit laws for $X_n$. In particular, we show that $X_n$ (suitably centered and scaled) satisfies a central limit theorem (CLT) with convergence rate of order $n^{-1/2}$.
\begin{theorem}[CLT]
\label{thm:CLT}
We have
    \[\mathbb{P}\left(
        \frac{X_n - \mathbb{E}(X_n)}{\sqrt{\mathbb{V}(X_n)}} \le x
        \right)
        =\Phi(x)+O_p\left(n^{-1/2}\right)
    \]
where 
\[
\Phi(x) = \frac{1}{\sqrt{2\pi}}\int_{-\infty}^{x}e^{-t^2/2}\,dt
\]
is the standard normal cumulative distribution function.
\end{theorem}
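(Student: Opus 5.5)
Our plan is to use Corollary~\ref{cor:zerorunsdistribution}, which writes $n-X_n$ as a deterministic function of the i.i.d.\ Bernoulli$(p)$ word $(\varepsilon_1,\ldots,\varepsilon_{n-1})$, and to reduce the CLT with rate $n^{-1/2}$ to a Berry--Esseen bound for a sum of bounded, finitely dependent variables. Since $\E(X_n)$ and $\Var(X_n)$ are given exactly by Theorem~\ref{thm:MeanAndVariance}, and $\Var(X_n)=pq(1-3pq)n+O(1)$ with $1-3pq\ge 1/4>0$, the normalization is of exact order $\sqrt n$.

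\emph{Case $p\ge 1/2$.} Write
\[
R_0(\varepsilon_1,\ldots,\varepsilon_{n-1})=\sum_{j=1}^{n-1}\mathbb{I}(\varepsilon_j=0,\ \varepsilon_{j-1}=1),
\]
with the convention $\varepsilon_0:=1$. This counts the starts of zero-runs. The summands $Y_j$ are bounded and $1$-dependent, and their variance is linear in $n$. The Berry--Esseen theorem for $m$-dependent sequences (e.g.\ Stein's method, or the classical bound of order $n^{-1/2}$ for bounded $m$-dependent sums with variance $\asymp n$) then gives the Kolmogorov distance $O(n^{-1/2})$ between $(X_n-\E X_n)/\sqrt{\Var X_n}$ and $\Phi$. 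The sign flip from $n-X_n$ to $X_n$ is harmless, since $\Phi$ is symmetric.

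\emph{Case $p<1/2$, $n>\nu$.} Here $\nu$ is a fixed constant depending only on $p$. By the corollary,
\[
n-X_n=R_0(\varepsilon_1,\ldots,\varepsilon_a)+Z_n, \qquad a=n-\nu,
\]
where $0\le Z_n\le \nu$ is bounded uniformly in $n$. The first term is handled exactly as in the previous case, giving a Berry--Esseen bound of order $a^{-1/2}=O(n^{-1/2})$ after centering and scaling by its own mean $\mu_a$ and variance $\sigma_a^2$. We then transfer this to $X_n$ in three steps.
\begin{enumerate}
\item The perturbation $Z_n$ is bounded by $\nu$, so
\[
\P\bigl(R_0\le t-\nu\bigr)\le \P\bigl(n-X_n\le t\bigr)\le \P\bigl(R_0\le t\bigr).
\]
After normalization, the shift $\nu/\sigma_a=O(n^{-1/2})$ costs only $O(n^{-1/2})$, because $\Phi$ is Lipschitz.
\item The exact mean and variance of $X_n$ differ from $(n-\mu_a,\sigma_a^2)$ by $O(1)$ in absolute terms. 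The variance claim uses Cauchy--Schwarz with $\Var(Z_n)=O(1)$. After normalization these discrepancies are $O(n^{-1/2})$ in location and $1+O(n^{-1})$ in scale.
\item Changing scale in $\Phi(x)$ by a factor $1+O(1/n)$ costs $\sup_x|x|\varphi(x)\cdot O(1/n)$. Together with the previous steps, the total error is $O_p(n^{-1/2})$, with constants depending on $p$ through $\nu$.
\end{enumerate}

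\emph{Small $n$.} For $n\le\nu$ (only relevant when $p<1/2$), there are finitely many values of $n$, so the bound holds trivially by enlarging the constant.

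The main obstacle is the careful citation of a Berry--Esseen bound for $1$-dependent bounded variables with the correct rate, together with uniform control of the constants.
\begin{itemize}
\item \textbf{Preferred route: Markov chain.} The pair $(\varepsilon_{j-1},\varepsilon_j)$ forms a finite-state, uniformly ergodic Markov chain, so known Berry--Esseen results for additive functionals of such chains apply directly.
\item \textbf{Fallback: independent blocks.} Split $R_0$ into alternating blocks of length $2$, condition on the boundary letters, and apply the classical Berry--Esseen theorem.
\end{itemize}
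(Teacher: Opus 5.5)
Your proof is correct, but it follows a different route from the paper. The paper never passes to the shelf word. It takes the closed form of $s_n(v)$ from Proposition~\ref{prop:asym-pgf} and writes $s_n(v)=A_1(v)\lambda_+(v)^{n-\nu}\bigl(1+O(\rho^{-n})\bigr)$ uniformly near $v=1$. It then verifies the variability condition $\fv(\lambda_+)=pq(1-3pq)\neq 0$ and invokes Hwang's quasi-powers theorem, which outputs the $O(n^{-1/2})$ rate directly for the exactly standardized variable.

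In the paper's approach the cases $p\ge 1/2$ and $p<1/2$ are treated identically. The initial regime $n\le\nu$ only enters through the analytic prefactor $A_1(v)$ (via $s_\nu$ and $s_{\nu+1}$), so no sandwiching or rescaling is needed. The same quasi-power representation is then reused for the local CLT and for the LDP through $\Lambda_p(\theta)=\log\lambda_+(e^\theta)$.

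Your approach instead uses Corollary~\ref{cor:zerorunsdistribution} to write $n-X_n$ as a $1$-dependent sum of bounded indicators plus a perturbation bounded by $\nu$. This avoids all analytic estimates and explains structurally why the fluctuations are Gaussian: the zero-run count is a local statistic of an i.i.d.\ word. The costs are that you must import a Berry--Esseen theorem for dependent summands, you need a separate transfer argument for $p<1/2$, and you do not get the local limit law or the LDP as by-products.

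Three details need adjusting.

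\begin{enumerate}
\item \emph{Variance comparison in step 2.} Cauchy--Schwarz only gives $|\Var(n-X_n)-\sigma_a^2|\le 2\sigma_a\sqrt{\Var(Z_n)}+\Var(Z_n)=O(\sqrt n)$, not $O(1)$. This still gives a scale factor $1+O(n^{-1/2})$ and hence an $O(n^{-1/2})$ error, so nothing breaks. The $O(1)$ bound is in fact true: $Z_n$ is a function of $\varepsilon_a,\dots,\varepsilon_{n-1}$, so it is correlated only with the last summand of $R_0$.
\item \emph{Order of $\sigma_a$ for $p<1/2$.} Theorem~\ref{thm:MeanAndVariance} does not literally give $\sigma_a\asymp\sqrt n$ in this case, since it concerns $X_n$, not $R_0(\varepsilon_1,\ldots,\varepsilon_a)$. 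It follows from the triangle inequality for standard deviations, $\sigma_a\ge\sqrt{\Var(X_n)}-\sqrt{\Var(Z_n)}$, or from a direct covariance computation.
\item \emph{Your fallback does not close.} Conditioning on alternate letters does make the conditional block sums independent. But their conditional means form another $1$-dependent sum fluctuating on scale $\sqrt n$, which is the original problem again. Cite directly either a Berry--Esseen bound under local dependence (e.g.\ Chen--Shao's bound of order $\sum_i\E|\xi_i|^3$ for normalized $m$-dependent summands), or Bolthausen's Berry--Esseen theorem for functionals of finite ergodic Markov chains. Either gives $O(n^{-1/2})$ here.
\end{enumerate}
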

In particular, when $p=1/2$, we get the following.
\begin{corollary}[CLT in the symmetric case]
Let $p=1/2$ and let $X_n=X_n(1/2)$. Then, 
\[\mathbb{P}\left(
        \frac{X_n - 3n/4}{\sqrt{n/16}} \le x
        \right)
        =\Phi(x)+O_p\left(n^{-1/2}\right)\;.
    \]
\end{corollary}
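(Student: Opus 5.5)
The plan is to prove the CLT through the representation of Corollary~\ref{cor:zerorunsdistribution}, which writes $n-X_n$ as a functional of the i.i.d.\ Bernoulli$(p)$ shelf word. It then suffices to prove a Berry--Esseen bound for that functional. The statement is asymptotic in $n$ with $p$ fixed, so $\nu$ is a constant and we may take $n\ge \nu+2$.

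\textbf{Case $p\ge 1/2$.} Here $n-X_n=R_0(\varepsilon_1,\dots,\varepsilon_{n-1})$. Write
\[
R_0=\sum_{j=1}^{n-1}\mathbb{I}\bigl(\varepsilon_j=0,\ \varepsilon_{j-1}=1\bigr),
\]
with the convention $\varepsilon_0=1$. This is a sum of bounded, $1$-dependent, stationary indicators (the first term excepted).

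\textbf{Case $p<1/2$, $n>\nu$.} Here
\[
n-X_n=R_0(\varepsilon_1,\dots,\varepsilon_a)+\sum_{j=\tau+1}^{n-1}\varepsilon_j, \qquad a=n-\nu.
\]
The second term is bounded by $\nu$, a constant, and it is independent of the first term because it depends only on $\varepsilon_a,\dots,\varepsilon_{n-1}$. Careful: $\varepsilon_a$ is shared. Let $S_a:=R_0(\varepsilon_1,\dots,\varepsilon_{a-1})$. Then $n-X_n=S_a+B_n$, where $|B_n|\le \nu+1$ is a correction depending only on $\varepsilon_{a-1},\dots,\varepsilon_{n-1}$.

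\textbf{Main step: Berry--Esseen for the $1$-dependent sum.} I would apply a Berry--Esseen theorem for $m$-dependent sequences, e.g.\ via Stein's method (Chen--Shao) or Tikhomirov's bound. With summands uniformly bounded and variance growing linearly, this gives
\[
\sup_x\Bigl|\P\bigl((S-\E S)/\sqrt{\Var S}\le x\bigr)-\Phi(x)\Bigr| = O(n^{-1/2}).
\]
Linear growth of the variance follows from Theorem~\ref{thm:MeanAndVariance}: $\Var(X_n)=pq(1-3pq)n+O(1)$, and $1-3pq\ge 1/4>0$.

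An alternative route, closer to the paper's analytic-combinatorics flavour, is to compute the bivariate generating function of $\pi_{n,k}$ from the recurrence of Theorem~\ref{thm:DistributionXn}. This is a transfer-matrix rational function, so $\E[u^{X_n}]=c(u)\rho(u)^n(1+O(\theta^n))$ uniformly near $u=1$, with a simple dominant pole. Hwang's quasi-powers theorem then gives the $O(n^{-1/2})$ rate directly. Its variance condition $\rho''(1)+\rho'(1)-\rho'(1)^2\ne0$ is again $pq(1-3pq)\neq 0$. I would likely present this route, since the generating function is needed anyway for the local CLT and the LDP.

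\textbf{Absorbing the bounded corrections.} We have to pass from the normalisation of $S_a$ to that of $X_n$. The bounded perturbation $B_n$ shifts the argument by $O(n^{-1/2})$ after scaling. The mean and variance of $X_n$ differ from those of $S_a$ by $O(1)$, which changes $x$ by $O(n^{-1/2})(1+|x|)$. Since $\Phi$ is Lipschitz and $|x|\phi(x)$ is bounded, both effects cost $O(n^{-1/2})$. The one subtlety is that $B_n$ is not independent of $S_a$. But $|B_n|\le \nu+1$ deterministically, so we can sandwich:
\[
\P(S\le y-c)\le \P(S+B\le y)\le \P(S\le y+c).
\]
This suffices without any independence.

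The main obstacle I expect is verifying uniformity: either the Berry--Esseen constant for $1$-dependent bounded variables, or, on the analytic route, the uniform dominance of $\rho(u)$ and the analyticity of the characteristic-function expansion in a complex neighbourhood of $u=1$.
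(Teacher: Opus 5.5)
Your proposal is correct. The analytic route you say you would present is the paper's proof. Theorem~\ref{thm:CLT} comes from Hwang's quasi-powers theorem (Proposition~\ref{prop:hwang_quasi_powers}) applied to the closed form $s_n(v)=A_1(v)\lambda_+(v)^{n-1}+A_2(v)\lambda_-(v)^{n-1}$ of Proposition~\ref{prop:asym-pgf}, with $\fv(\lambda_+)=pq(1-3pq)$. The corollary is then read off at $p=1/2$, where Theorem~\ref{thm:MeanAndVariance} gives exactly $\E X_n=3n/4$ for $n\ge 2$ and $\Var(X_n)=n/16$ for $n\ge 3$.

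Your primary route is genuinely different and more elementary. It writes $n-X_n=R_0(\varepsilon_1,\dots,\varepsilon_{n-1})$ as a sum of bounded $1$-dependent indicators and applies a Berry--Esseen theorem for $m$-dependent or locally dependent sums (Shergin, Rinott, Chen--Shao). The uniformity you flag as the main obstacle is then automatic, because those bounds depend only on the bound on the summands, the dependence range, and $\sigma_n^2/n$. The only further input is $\Var(X_n)\asymp n$.

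What this route gives up is reuse: the paper's single quasi-power expansion also yields the local CLT (Theorem~\ref{thm:LocalCLT}) and the LDP (Theorem~\ref{thm:LDP}). It also rests on the pathwise identity of Corollary~\ref{cor:zerorunsdistribution} rather than on the recurrence for $s_n$.

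For this particular statement, most of your bookkeeping is unnecessary. At $p=1/2$ one has $\nu=1$, so the case $p<1/2$ and the sandwich argument never arise. Moreover, the centering $3n/4$ and scaling $\sqrt{n/16}$ are the exact mean and standard deviation of $X_n$ for all $n\ge 3$, so no correction has to be absorbed. You should state this explicitly, since it is the only step that is specific to the corollary.
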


Our next result gives a local CLT for $X_n$. The central limit theorem only controls the typical behaviour of $X_n$. One is often interested in finer types of fluctuations. Note that a central limit theorem suggests that 
\[
\mathbb{P}(a\leq X_n\leq a+h)\approx \frac{h}{\sigma_n\sqrt{2\pi}}\exp\left(-\frac{(a-\mu_n)^2}{2\sigma_n^2}\right)\;.
\]
However, a CLT does not quite imply this (even if one has a quantitative Berry--Esseen rate), and indeed such approximations may fail even when a CLT holds. The local CLT provides the above approximation for the fluctuation of $X_n$. 
\begin{definition}[{\cite[Definition IX.4]{FlaSed}}]
A sequence of discrete probability distributions $p_{n,k}:=\mathbb{P}(X_n=k)$ with mean $\mu_n$ and standard deviation $\sigma_n$ is said to obey a
\emph{local limit law of the Gaussian type} if, there exists a sequence
$\varepsilon_n \to 0$, 
\begin{equation}
\sup_{x\in\mathbb{R}}
\left|
\sigma_n\, p_{n,\lfloor \mu_n + x\sigma_n \rfloor}
-\frac{1}{\sqrt{2\pi}}e^{-x^2/2}
\right|
\le \varepsilon_n .
\end{equation}
\end{definition}

\begin{theorem}[Local CLT]
\label{thm:LocalCLT}
$X_n$ satisfies a local CLT with rate $\varepsilon_n=O_p(n^{-1/2})$.
\end{theorem}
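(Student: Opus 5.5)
We prove Theorem~\ref{thm:LocalCLT} by analysing the probability generating function $P_n(u):=\E[u^{X_n}]$ and extracting its coefficients by a saddle-point/Cauchy integral argument on the unit circle, in the spirit of the quasi-power framework of \cite[Ch.~IX]{FlaSed}.

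The first step is to obtain a rational bivariate generating function. For $p\ge 1/2$, Corollary~\ref{cor:zerorunsdistribution} identifies $n-X_n$ with the number of zero-runs $R_0$ in a Bernoulli$(p)$ word of length $n-1$. The transfer matrix for $R_0$ is $2\times 2$, so $\E[v^{n-X_n}]$ is a linear combination of $\lambda_1(v)^{n-1}$ and $\lambda_2(v)^{n-1}$. Here $\lambda_{1,2}(v)$ are the roots of
\[
\lambda^2-\lambda+pq(1-v)=0,
\]
which is the matrix with rows $(p,qv)$ and $(p,q)$ read in the states (last letter $1$, last letter $0$). For $p<1/2$ we use the recursion of Theorem~\ref{thm:DistributionXn}(3). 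Summing it against $u^kz^n$ shows that $F(z,u)=\sum_n P_n(u)z^n$ satisfies
\[
F(z,u)\bigl(1-puz-\textstyle\sum_{j\ge2}pq^{j-1}u^{j-1}z^j\bigr)=\text{(polynomial of degree about }\nu\text{ in }z\text{)}+\text{terms from }q^{n-1}u^{n-1}z^n .
\]
The denominator is again rational: $1-puz-\frac{pqu z^2}{1-quz}$. In fact it is proportional to $1-quz-puz+p q u^2z^2-pquz^2$, i.e. it is the same quadratic kernel as before after the substitution $v=1/u$ and rescaling, consistent with Corollary~\ref{cor:zerorunsdistribution}(3). The boundary term $\sum q^{n-1}u^{n-1}z^n$ contributes a pole at $z=1/(qu)$. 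Hence in all cases
\[
P_n(u)=A(u)\rho(u)^n+B(u)\sigma(u)^n+C(u)(qu)^n ,
\]
where $A,B,C$ are analytic near $|u|=1$ and $\rho(1)=1$ is the dominant root.

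The second step is to check that $\rho$ strictly dominates on the whole unit circle. For $|u|=1$, $u\ne1$, we need $|\rho(u)|<1$, together with $|\sigma(u)|,|qu|\le c<1$ uniformly. The latter two hold near $u=1$ because $\sigma(1)=pq$ in the $p\ge1/2$ normalisation and $q<1$; the former follows because $\rho(u)^n$ behaves like the generating function of a sum of a Markov chain's increments, which are aperiodic since $X_n$ takes consecutive values. The aperiodicity can be made concrete: $\pi_{n,k}>0$ for a full interval of $k$. Log-concavity (Theorem~\ref{thm:logConcave}) gives that there are no internal zeros, so the support is an interval. That rules out $|P_n(u)|$ staying close to $1$ at $u\ne1$, and from this we derive $|\rho(u)|<1$ via a standard lemma (if $|\rho(e^{i\theta})|=1$ then the lattice span would be larger).

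The third step is the local expansion. Near $u=e^{i\theta}$, $|\theta|\le n^{-1/3}$, we write $\log\rho(e^{i\theta})=i\mu\theta-\sigma^2\theta^2/2+O(\theta^3)$, with $\mu n$ and $\sigma^2 n$ matching Theorem~\ref{thm:MeanAndVariance} up to $O(1)$. We then evaluate
\[
\pi_{n,k}=\frac1{2\pi}\int_{-\pi}^{\pi}P_n(e^{i\theta})e^{-ik\theta}\,d\theta .
\]
The central range gives the Gaussian density with error $O(n^{-1})$ after the usual Taylor expansion, the correction to $\sigma_n\pi_{n,k}$ being $O(n^{-1/2})$. The outer range is exponentially small by the second step, and the subdominant terms $B\sigma^n$ and $C(qu)^n$ contribute $O(c^n)$. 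This is exactly \cite[Thm.~IX.14]{FlaSed} for quasi-powers with a uniform strict-domination hypothesis, so we may simply verify its hypotheses.

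The main obstacle is the second step for $p<1/2$. There the generating function mixes the regime-switching boundary (the $\nu$ initial conditions and the $q^{n-1}$ term), and we must confirm that no extra singularity coincides in modulus with $\rho(u)$ on $|u|=1$ and that $A(u)\ne0$ at $u=1$. I would handle this by computing the explicit quadratic roots and checking $|qu|=q<1=\rho(1)$ and $|\rho(e^{i\theta})|<1$ directly from the quadratic formula.
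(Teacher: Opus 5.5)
Your framework is the paper's: a two-root quasi-power representation coming from the kernel $1-uz+pq\,u(u-1)z^2$, followed by \cite[Thm.~IX.14]{FlaSed}. There are two small slips. First, there is no $(qu)^n$ term: after multiplying through by $1-quz$, the boundary series $z(quz)^{\nu}/(1-quz)$ becomes a monomial, so $C\equiv 0$. Second, the subdominant root satisfies $\sigma(1)=0$, not $pq$.

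The real problem is your second step. It is the only hypothesis of Theorem IX.14 beyond those of the CLT: you must show $|P_n(u)|\le K^{-n}$ on the unit circle outside a neighbourhood of $1$, i.e.\ that \emph{both} roots lie strictly inside the unit disc there. The log-concavity argument does not deliver this. Interval support of $X_n$ only gives $|P_n(u)|<1$ for each fixed $n$. That is compatible with $|\rho(u_0)|=1$ at some $u_0\neq 1$, in which case $|P_n(u_0)|\to|A(u_0)|$, possibly $<1$, so you get neither a contradiction nor exponential decay. For example, for a two-state Markov chain with all transition probabilities positive, the number of state changes has support $\{0,\dots,n\}$. Yet its twisted transfer matrix has spectral radius $1$ at $u=-1$, and $\E[(-1)^{X_n}]$ does not tend to $0$. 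You also control $|\sigma(u)|$ only near $u=1$, whereas away from $1$ both roots must be bounded. Your closing plan to check $|\rho(e^{i\theta})|<1$ ``directly from the quadratic formula'' names the right task but supplies no estimate.

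The missing ingredient is a concrete bound on the roots. The paper writes $\Delta(v)=v^2-4pq\,v(v-1)=v\bigl((1-4pq)v+4pq\bigr)$. For $|v|=1$, $v\neq 1$, the second factor is a convex combination of $v$ and $1$. If $4pq<1$ it lies strictly inside the disc, so $|\sqrt{\Delta(v)}|<1$ and $|\lambda_\pm(v)|\le\tfrac12\bigl(1+|\sqrt{\Delta(v)}|\bigr)<1$. If $p=\tfrac12$, strictness comes instead from $|\sqrt{v}\pm 1|<2$. Compactness of the remaining arc then yields $K>1$.

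Alternatively, your ``standard lemma'' becomes rigorous via Wielandt's theorem applied to $M(v)=\begin{pmatrix}p&qv\\ p&q\end{pmatrix}$. If $|v|=1$ and the spectral radius is $1$, then $M(v)=e^{ic}DM(1)D^{-1}$ with $D$ diagonal unitary. The zero-increment entries force $c=0$ and $D$ scalar, and then the $qv$ entry forces $v=1$. This would be a more structural alternative to the paper's computation.

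Finally, when $pq=\tfrac18$ the two roots coincide at $u=-1$, so $A$ and $B$ are not analytic on a full neighbourhood of the circle. There, bound $P_n(u)$ through the recurrence or the matrix power rather than term by term.
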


Our next result exhibits a large deviation principle (LDP) for $X_n$. We refer the readers to the excellent books~\cite{DZ10, Varadhan} for an introduction to the fascinating area of large deviations. Before we state our result, we recall some definitions.

A \emph{good rate function} is a lower semicontinuous function $I:\mathbb{R}\to [0, +\infty]$ such that the level sets
\[
\Psi_{I}(\alpha):=\{x\in \R: I(x)\leq \alpha\}
\]
are compact sets for all $\alpha\in [0, \infty)$. For a set $A\subseteq \R$, we write $\overline{A}$ for the closure of $A$ and $A^{\circ}$ for the interior of $A$. We say that a sequence of random variables $(Y_n)_{n\geq 1}$ satisfies a \emph{large deviation principle} (LDP) with speed $n$ and the good rate function $I$, if 
\begin{align*}
    -\inf_{x\in {A^{\circ}}} I(x)&\leq \liminf_{n\to \infty}\frac{1}{n}\log\mathbb{P}(Y_n\in A)\\
    &\leq \limsup_{n\to \infty}\frac{1}{n}\log\mathbb{P}(Y_n\in A)\leq -\inf_{x\in \overline{A}} I(x),
\end{align*}
for every Borel subset $A\subseteq \R$. Our next result gives an LDP for $X_n\equiv X_n(p)$. 
\begin{theorem}[LDP for $X_n$]
\label{thm:LDP}
Let $p\in (0, 1)$. Set 
\[y_p(x) := \frac{1-x+\sqrt{(1-x)^2+(1-4pq)(2x-1)}}{2x-1}, \quad x\in (1/2, 1)\;.
\]
Define the good rate function $I_p$ as
\begin{align*}
    I_p(x) :=\begin{cases}
        (1-x)\log\frac{y_p^2(x)-(1-4pq)}{4pq}+\log \frac{2}{1+y_p(x)}, & 1/2< x <1\\
        +\infty, & x\not\in [1/2, 1]\;,
    \end{cases}
\end{align*}
and continuously extend $I_p$ to $1/2$ and $1$ by 
\[
I_p(1/2)=\frac{1}{2}\log\frac{1}{pq}, \qquad I_p(1) = \log\frac{1}{\max\{p, 1-p\}}\;. 
\]
Then, $X_n/n$ satisfies an LDP with speed $n$ and the good rate function $I_p$.
\end{theorem}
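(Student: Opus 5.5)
The plan is to reduce Theorem~\ref{thm:LDP} to an LDP for the number of zero-runs in an i.i.d.\ Bernoulli$(p)$ word, then apply the G\"artner--Ellis theorem through a $2\times 2$ transfer matrix, and finally use the contraction principle together with exponential equivalence.

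\emph{Reduction.} By Corollary~\ref{cor:zerorunsdistribution}, if $p\ge 1/2$ then $n-X_n=R_0(\varepsilon_1,\dots,\varepsilon_{n-1})$ exactly. If $p<1/2$ and $n>\nu$, then $n-X_n=R_0(\varepsilon_1,\dots,\varepsilon_a)+\sum_{j>\tau}\varepsilon_j$ with $a=n-\nu$. Here the last sum is at most $\nu$. Appending at most $\nu$ letters changes $R_0$ by at most $\nu$. Hence in every case
\[
\bigl|\,(n-X_n)-R_0(\varepsilon_1,\dots,\varepsilon_{n-1})\,\bigr|\le 2\nu+1
\]
deterministically. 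Consequently $X_n/n$ and $1-R_0(\varepsilon_1,\dots,\varepsilon_{n-1})/n$ are exponentially equivalent at speed $n$ (their difference is $O(1/n)$ surely). It therefore suffices to prove an LDP for $S_N:=R_0(\varepsilon_1,\dots,\varepsilon_N)/N$ with rate $J(s)=I_p(1-s)$, and then apply the continuous map $s\mapsto 1-s$.

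\emph{Transfer matrix.} Let the state be the last letter read. A new zero-run begins exactly when a $0$ follows a $1$ or starts the word. This gives
\[
\E\bigl[e^{\theta R_0(\varepsilon_1,\dots,\varepsilon_N)}\bigr]=u_\theta^{\top} M(\theta)^{N-1}\mathbf 1,\qquad M(\theta)=\begin{pmatrix} q & p\\ q e^{\theta} & p\end{pmatrix},
\]
with rows and columns indexed by the states $0,1$ and a positive initial vector $u_\theta$. The matrix $M(\theta)$ has trace $1$ and determinant $pq(1-e^{\theta})$. By Perron--Frobenius its dominant eigenvalue is
\[
\lambda(\theta)=\frac{1+\sqrt{1-4pq+4pq\,e^{\theta}}}{2}.
\]
Since $M(\theta)$ is positive and the boundary vectors are positive, $\frac1N\log\E e^{\theta N S_N}\to\Lambda(\theta):=\log\lambda(\theta)$ for every real $\theta$. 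The function $\Lambda$ is finite and differentiable on all of $\R$, so it is essentially smooth. G\"artner--Ellis then yields an LDP for $S_N$ with good rate function $\Lambda^*(s)=\sup_\theta\{\theta s-\Lambda(\theta)\}$.

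\emph{Identifying the rate function.} Substitute $y=\sqrt{1-4pq+4pq e^{\theta}}$, so that $e^{\theta}=(y^2-(1-4pq))/(4pq)$ and $-\Lambda=\log\frac{2}{1+y}$. The stationarity equation $s=\Lambda'(\theta)$ becomes $s=\frac{y^2-(1-4pq)}{2y(1+y)}$. With $s=1-x$ this is the quadratic $(2x-1)y^2-2(1-x)y-(1-4pq)=0$, whose positive root is exactly $y_p(x)$. Plugging back in gives $\Lambda^*(1-x)=I_p(x)$ for $x\in(1/2,1)$.

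The range of $\Lambda'$ is $(0,1/2)$: it tends to $0$ as $\theta\to-\infty$ and to $1/2$ as $\theta\to+\infty$. So the endpoints are obtained from asymptotic slopes of $\Lambda$, and outside $[0,1/2]$ for $s$ we get $+\infty$. At the endpoints:
\begin{itemize}
\item[] $\Lambda^*(0)=-\lim_{\theta\to-\infty}\Lambda(\theta)=-\log\frac{1+|p-q|}{2}=\log\frac1{\max\{p,q\}}$;
\item[] $\Lambda^*(1/2)=\lim_{\theta\to\infty}\bigl(\theta/2-\Lambda(\theta)\bigr)=\frac12\log\frac1{pq}$.
\end{itemize}
These match the stated continuous extension of $I_p$. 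As a sanity check, they correspond to the all-$1$ or all-$0$ word and the alternating word, respectively.

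\emph{Main obstacle.} The only real care needed is in two places. First, for $p<1/2$ one must verify the deterministic $O(\nu)$ bound between $n-X_n$ and $R_0$ uniformly in $n$, which follows directly from the corollary. Second, one must check that the G\"artner--Ellis hypotheses hold for all $\theta\in\R$, including $\theta<0$, where $1-4pq+4pqe^{\theta}\ge(p-q)^2\ge0$. In the case $p=1/2$ this quantity tends to $0$ as $\theta\to-\infty$, but $\lambda(\theta)>0$ and $\Lambda$ is still smooth for every finite $\theta$, so nothing breaks.
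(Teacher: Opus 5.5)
Your proof is correct, but it reaches the limiting cumulant generating function by a different route than the paper.

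\textbf{The paper's route.} The paper applies G\"artner--Ellis directly to $X_n/n$. It obtains $\lim_n \frac1n\log s_n(e^\theta)=\log\lambda_+(e^\theta)$ from the closed form $s_n=A_1\lambda_+^{n-\nu}+A_2\lambda_-^{n-\nu}$ of Proposition~\ref{prop:asym-pgf}, via the quasi-power estimates \eqref{eq:estimateonsn} and \eqref{eq:estimatepgeq12}. It then uses the same substitution $y=\sqrt{(1-4pq)+4pq\,e^{-\theta}}$ that you use, with $\theta$ in place of your $-\theta$, since you work with $n-X_n$ rather than $X_n$.

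\textbf{Your route.} You go through the shelf-word description of Corollary~\ref{cor:zerorunsdistribution}. You couple $n-X_n$ deterministically with the zero-run count $R_0$ up to $O(\nu)$, compute the limit from a positive $2\times 2$ transfer matrix via Perron--Frobenius, and transfer the LDP by exponential equivalence. The two limits agree because $e^{\theta}\lambda(-\theta)=\lambda_+(e^\theta)$.

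\textbf{What your route buys.} It gives rigor at every real $\theta$. The paper's asymptotics were only established uniformly on a complex neighbourhood of $v=1$. G\"artner--Ellis, however, needs the limit at every $v=e^\theta\in(0,\infty)$. That in turn requires the coefficient $A_1(v)$ of $\lambda_+^{n-\nu}$ not to vanish, which the paper does not check for $v>1$ when $p<1/2$. Your positive matrix with positive boundary vectors makes this automatic. You also compute $\Lambda^*(0)=\log\frac{1}{\max\{p,q\}}$ and $\Lambda^*(1/2)=\frac12\log\frac1{pq}$ from the asymptotic slopes of $\Lambda$, and interpret them through the constant and alternating words. The paper simply asserts these endpoint values as the continuous extension.

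\textbf{What the paper's route buys.} It needs neither the combinatorial description of the strategy nor a coupling argument. It also treats $p<1/2$ and $p\ge 1/2$ uniformly through one generating function.
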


\begin{corollary}
When $p=1/2$, the random variable $X_n/n$ satisfies an LDP with speed $n$ and good rate function 
    \[
I_{1/2}(x)= (2x-1)\log (2x-1)+2(1-x)\log (1-x)+(3-2x)\log 2,
    \]
when $x\in [1/2, 1]$ and $I_{1/2}(x)=+\infty$ otherwise.
\end{corollary}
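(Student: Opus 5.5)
The statement to prove is the last one displayed: the corollary for $p=1/2$. It should follow by specializing Theorem~\ref{thm:LDP} to $p=1/2$ and simplifying the rate function. No new probabilistic input is needed, since the LDP for $X_n/n$ with good rate function $I_p$ is already given for every $p\in(0,1)$.

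The plan is as follows.

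\textbf{Step 1: compute $y_{1/2}$.} At $p=q=1/2$ we have $4pq=1$, so the term $(1-4pq)(2x-1)$ under the square root vanishes. For $x\in(1/2,1)$ the square root equals $|1-x|=1-x$. Hence
\[
y_{1/2}(x)=\frac{2(1-x)}{2x-1}.
\]

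\textbf{Step 2: simplify each term of $I_{1/2}$.}

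For the first term, $\frac{y^2-(1-4pq)}{4pq}=y^2$ when $p=1/2$, so
\[
(1-x)\log y^2 = 2(1-x)\log\bigl(2(1-x)\bigr)-2(1-x)\log(2x-1).
\]

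For the second term, $1+y=\frac{2x-1+2-2x}{2x-1}=\frac{1}{2x-1}$, so
\[
\log\frac{2}{1+y}=\log 2+\log(2x-1).
\]

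Adding the two pieces gives
\[
I_{1/2}(x)=\bigl(1-2(1-x)\bigr)\log(2x-1)+2(1-x)\log(1-x)+\bigl(2(1-x)+1\bigr)\log 2.
\]
Since $1-2(1-x)=2x-1$ and $2(1-x)+1=3-2x$, this is
\[
(2x-1)\log(2x-1)+2(1-x)\log(1-x)+(3-2x)\log 2,
\]
which is the claimed formula on $(1/2,1)$.

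\textbf{Step 3: check the endpoints.} These must agree with the continuous extension in Theorem~\ref{thm:LDP}.
\begin{itemize}
\item At $x=1/2$, with the convention $0\log 0=0$, the formula gives $\log(1/2)+2\log 2=\log 2$. This matches $\frac12\log\frac1{pq}=\frac12\log 4$.
\item At $x=1$, it gives $\log 2$. This matches $\log\frac1{\max\{p,q\}}=\log 2$.
\end{itemize}
Outside $[1/2,1]$ both functions are $+\infty$.

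The only point requiring care is choosing the correct sign of the square root in Step 1, which is fixed by $x<1$. The rest is routine algebra, so I expect no real obstacle.
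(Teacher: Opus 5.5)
Your proposal is correct and follows the paper's route: the paper gives no separate proof, and the corollary is just Theorem~\ref{thm:LDP} specialized to $p=1/2$. There $4pq=1$ gives $y_{1/2}(x)=2(1-x)/(2x-1)$ and $1+y_{1/2}(x)=1/(2x-1)$, and your algebra and endpoint checks ($I_{1/2}(1/2)=I_{1/2}(1)=\log 2$, using $0\log 0=0$) are accurate.
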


\subsubsection{Phase transitions}
We now describe phase transitions near $p=1$ and $p=0$ respectively. Note that when $p\in \{0, 1\}$, the final deck is deterministic and therefore $X_n=n$ almost surely. 

We begin with the phase transition at $p=1$. To this end, it is instructive to analyze $X_n(p)$ when $p\approx 1$. Let $\alpha,\lambda>0$ and let  $p=1-\frac{\lambda}{n^{\alpha}}$. In this case, 
\[\P\{X_n=n\}=\round{1-\frac{\lambda}{n^{\alpha}}}^{n-1}.\]
As $n\to \infty$, we get
\[
\P(X_n=n)\to
\begin{cases}
0,\quad 0<\alpha<1,\\
e^{-\lambda},\quad \alpha=1,\\
1,\quad \alpha> 1,
\end{cases}
\]
which follows directly from the explicit formula $\P(X_n=n)=p^{n-1}$ and the $\exp-\log$ expansion. In this regime, it is natural to investigate the fluctuation of $X_n/n$ or $n-X_n$. This is what our next result describes.
\begin{theorem}
\label{thm:PhaseTransition}
Let $X_n$ be as above and let $p=1-\frac{\lambda}{n^{\alpha}}$.
\begin{enumerate}
    \item When $\alpha>1$, $X_n/n$ converges to $1$ in probability.
    \item  When $\alpha=1$, (i.e. $p=1-\lambda/n$), we show that the number of incorrect guesses $D_n:=n-X_n$ converges to a $\text{Poisson}(\lambda)$ random variable as $n\to \infty$.
    \item For $\alpha\in(0,1)$, we have that $(X_n-\mathbb{E}(X_n))/\sqrt{\Var(X_n)}$ converges to a normal distribution. 
\end{enumerate}
\end{theorem}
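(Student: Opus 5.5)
Since $p=1-\lambda/n^{\alpha}\to 1$, we have $p>1/2$ for all large $n$. By Corollary~\ref{cor:zerorunsdistribution}(1), $D_n:=n-X_n=R_0(\varepsilon_1,\dots,\varepsilon_{n-1})$, the number of zero-runs in an i.i.d.\ Bernoulli$(p)$ word of length $n-1$. Everything therefore reduces to the statistics of this run count when zeros are rare, with $q=\lambda n^{-\alpha}$. We write
\[
R_0=\sum_{j=1}^{n-1}\mathbb{I}(\varepsilon_j=0,\ \varepsilon_{j-1}=1),
\]
with the convention $\varepsilon_0=1$, so each summand marks the start of a zero-run.

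\textbf{Case (1), $\alpha>1$.} Here $\E(D_n)\le (n-1)q\le \lambda n^{1-\alpha}\to 0$. Markov's inequality gives $D_n\to 0$ in probability, and since $0\le D_n/n\le D_n$, we get $X_n/n\to1$ in probability. Alternatively, $\P(X_n=n)=p^{n-1}\to1$ as noted before the statement.

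\textbf{Case (2), $\alpha=1$.} We compare $R_0$ with $Z_n:=\sum_{j=1}^{n-1}\mathbb{I}(\varepsilon_j=0)\sim\operatorname{Bin}(n-1,\lambda/n)$, which converges to Poisson$(\lambda)$. We have $0\le Z_n-R_0\le \sum_{j=2}^{n-1}\mathbb{I}(\varepsilon_{j-1}=\varepsilon_j=0)$. The right-hand side has expectation at most $nq^2=\lambda^2/n\to0$, so $Z_n-R_0\to0$ in probability, and Slutsky's lemma gives $D_n\Rightarrow$ Poisson$(\lambda)$. As a check, one could instead extract this from Theorem~\ref{thm:DistributionXn}(1): with $k=n-d$ fixed $d$, the dominant term is $m=d$, giving $\binom{d-1}{d-1}\binom{n-d}{d}q^dp^{n-1-d}\to \lambda^d e^{-\lambda}/d!$, but the coupling argument is cleaner.

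\textbf{Case (3), $0<\alpha<1$.} This is the main obstacle. The CLT of Theorem~\ref{thm:CLT} is for fixed $p$, with constants depending on $p$, so it cannot be quoted directly. By Theorem~\ref{thm:MeanAndVariance} with $\nu=1$, we have $\Var(X_n)=pq(1-3pq)n+O(1)\sim \lambda n^{1-\alpha}\to\infty$. The summands $Y_j=\mathbb{I}(\varepsilon_j=0,\varepsilon_{j-1}=1)$ form a $1$-dependent, uniformly bounded stationary sequence. We apply a CLT for $m$-dependent triangular arrays with bounded summands, either Berk's theorem or Stein's method with dependency neighbourhoods (e.g.\ the Chen--Shao or Baldi--Rinott bound). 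Such a bound gives a Kolmogorov error of order $\sum_j\E|Y_j|^3/\sigma_n^3\lesssim nq/(nq)^{3/2}=(nq)^{-1/2}\to0$, using that each $Y_j$ has at most three dependent neighbours. Hence $(D_n-\E D_n)/\sqrt{\Var D_n}$ is asymptotically normal, and so is $X_n=n-D_n$ after the sign flip.

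A second route is analytic: use the bivariate generating function of zero-runs, which is rational, and apply a quasi-powers argument uniformly in $q\to0$. This needs more bookkeeping, so I would use the Stein bound. The key point to verify is that the variance grows, i.e.\ $nq\to\infty$, which is exactly the condition $\alpha<1$.
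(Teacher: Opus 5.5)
Your proof is correct. For parts (2) and (3) it takes a different route from the paper; part (1) is the paper's own argument (a first-moment bound, or $p^{n-1}\to1$).

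For (2), the paper works from the explicit law in Theorem~\ref{thm:DistributionXn}(1). It factors out $q^kp^{n-k-1}$ and sandwiches the remaining sum between $\binom{n-k}{k}$ and $\binom{n}{k}(1-q/p)^{-k}$. Your coupling through Corollary~\ref{cor:zerorunsdistribution}(1) is cleaner. In fact $Z_n-R_0$ \emph{equals} the number of adjacent $00$ pairs, and combined with Le Cam's inequality this gives a total-variation rate $O(1/n)$.

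For (3), the paper does not use the run structure. It combines $\Var(X_n)\sim\lambda n^{1-\alpha}\to\infty$ with the log-concavity of Theorem~\ref{thm:logConcave} and appeals to Harper's method. Harper's method, however, requires the probability generating polynomial to be real-rooted, which the paper does not establish. Log-concavity with divergent variance does not by itself force normality: the uniform law on $\{0,\dots,n\}$ is log-concave. Your $1$-dependent decomposition avoids this issue and gives an explicit Kolmogorov rate $O((nq)^{-1/2})=O(n^{-(1-\alpha)/2})$.

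The one point to tighten is which normal approximation you cite, since only a third-moment bound covers all of $0<\alpha<1$.
\begin{itemize}
\item Chen--Shao's Berry--Esseen bound for $m$-dependent sums gives $C\,\sigma_n^{-3}\sum_j\E|Y_j-\E Y_j|^3\le C\,nq/\sigma_n^{3}\asymp (nq)^{-1/2}$, exactly as you claim. This uses $|Y_j-\E Y_j|\le 1$ and $\E Y_j\le q$.
\item Bounds expressed through a uniform bound on the summands (Baldi--Rinott type) contain terms involving $n/\sigma_n^{3}\asymp n^{(3\alpha-1)/2}$. They therefore only cover $\alpha<1/3$.
\item Berk's theorem assumes $\Var(D_n)/n$ tends to a positive limit, whereas here it is of order $q\to0$.
\end{itemize}
So drop the Berk and Baldi--Rinott alternatives and cite the Chen--Shao bound. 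A blocking argument with Lindeberg's condition would also work: take blocks of length $L\to\infty$ with $L=o(\sqrt{nq})$.
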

We now describe the phase transition in the $p=p_n\to 0$ regime. This is motivated by the two different distributions for $X_n$ in the case where $0<p<1/2$ described in Corollary~\ref{cor:zerorunsdistribution}.
Notice that Corollary~\ref{cor:zerorunsdistribution} predicts binomial-like behavior for $X_n$ when $n\leq \nu_p$. We capture this in the following theorem. 
\begin{theorem}
\label{thm:phasetransitionp0}
Suppose that $p\to 0$ as $n\to\infty$. 
    \begin{enumerate}
        \item If
            \[\frac{1}{n}\ll p\ll\frac{\ln n-\ln\ln n}{n}\]
            then 
            \begin{equation}
            \label{eq:normalconvergencep0}
                \frac{X_n-nq}{\sqrt{npq}}
            \end{equation}
            converges in distribution to a standard normal random variable.
        \item If $p\ll 1/n$, then $X_n/n\to 1$ in probability.
        \item If $p\gg\frac{\ln n-\ln\ln n}{n}$, then, centered by its mean and variance, $X_n$ has a limiting standard normal distribution.
    \end{enumerate}
\end{theorem}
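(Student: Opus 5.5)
The plan is to reduce all three regimes to the pathwise description in Corollary~\ref{cor:zerorunsdistribution}. The first step is to locate $n$ relative to $\nu=\nu_p$ as $p\to0$. Since $\log(1-p)=-p(1+O(p))$, we have $\nu=\frac{\log(1/p)}{p}(1+o(1))$. So, up to lower order terms, the condition $n\le\nu$ reads $np\le\log(1/p)$.

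\emph{The threshold.} Put $p=c\,(\ln n-\ln\ln n)/n$. Then $\log(1/p)=\ln n-\ln\ln n-\ln c+o(1)$, while $np=c(\ln n-\ln\ln n)$. Hence $n\le\nu$ eventually if $c\to0$, and $n-\nu\to\infty$ if $c\to\infty$. More precisely, in the latter case $p(n-\nu)=np-\log(1/p)+o(1)$. I will verify this tends to $+\infty$ whenever $p\gg(\ln n-\ln\ln n)/n$; this is where the specific $\ln\ln n$ correction enters. This bookkeeping lemma is elementary, and it is the first thing I would write down.

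\emph{Regimes (1) and (2).} Under $p\ll(\ln n-\ln\ln n)/n$ (in particular under $p\ll1/n$), we are eventually in the case $n\le\nu$. Corollary~\ref{cor:zerorunsdistribution}(2) then gives $n-X_n\law\operatorname{Bin}(n-1,p)$.
\begin{itemize}
\item For (1), we additionally have $np\to\infty$, so $(n-1)pq\to\infty$. The de Moivre--Laplace theorem, or Lindeberg for triangular Bernoulli arrays, shows that $\bigl((n-1)p-(n-X_n)\bigr)/\sqrt{(n-1)pq}$ is asymptotically normal. Replacing $(n-1)p$ by $np$ and $(n-1)pq$ by $npq$ changes things only by $O(p/\sqrt{np})\to0$ and a factor $\to1$. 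By Slutsky this yields \eqref{eq:normalconvergencep0}.
\item For (2), $\E(n-X_n)=(n-1)p\to0$, so $n-X_n\to0$ in probability by Markov, and a fortiori $X_n/n\to1$.
\end{itemize}

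\emph{Regime (3).} Now $n>\nu$ eventually, with $a=n-\nu$ and $pa\to\infty$. Corollary~\ref{cor:zerorunsdistribution}(3) gives $n-X_n=A_n+B_n$, where:
\begin{itemize}
\item $A_n=R_0(\varepsilon_1,\dots,\varepsilon_a)=\sum_{j\le a}\mathbb I(\varepsilon_j=0,\ \varepsilon_{j-1}=1\text{ or }j=1)$, a sum of bounded $1$-dependent indicators;
\item $B_n=\sum_{j>\tau}\varepsilon_j$, which depends only on $(\varepsilon_a,\dots,\varepsilon_{n-1})$.
\end{itemize}
Conditionally on $\varepsilon_a$, the two terms are independent. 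A direct computation gives $\Var(A_n)=pq(1-3pq)a+O(1)$, which tends to infinity. Hence $A_n$ satisfies a CLT by the CLT for $m$-dependent triangular arrays with bounded summands (for example via Stein's method or a Bernstein blocking argument).

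For $B_n$, conditionally on $\tau=t$ it is $\operatorname{Bin}(n-1-t,p)$. Since $\tau-a$ is geometric with mean $1/p$ truncated at $\nu$, we have $n-1-t\approx\nu-\mathrm{Geom}(p)$. There are two possibilities:
\begin{itemize}
\item If $\Var(B_n)=o(\Var(A_n))$, then $B_n$ is negligible after standardization.
\item If $\Var(B_n)$ is comparable to $\Var(A_n)$, I would show that $B_n$ is itself asymptotically normal. Its mean $\approx\nu p\approx\log(1/p)\to\infty$, and its random length fluctuates only by $O(1/p)$, contributing $O(1)$ to the mean.
\end{itemize}

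Combining the two pieces, I would compute the conditional characteristic function given $\varepsilon_a$. It factorizes into the two characteristic functions, each close to a Gaussian one, uniformly in the value of $\varepsilon_a$. This gives the CLT for $n-X_n$ centered by $\E A_n+\E B_n$ and scaled by $\sqrt{\Var A_n+\Var B_n}$. The covariance through the single shared bit is $O(1)$, so this scaling agrees asymptotically with the exact $\E(X_n)$ and $\Var(X_n)$ of Theorem~\ref{thm:MeanAndVariance}. By Slutsky, standardizing by the exact moments gives the claim.

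\emph{Main obstacle.} I expect the hardest part to be making regime (3) uniform over the whole range $p\gg(\ln n-\ln\ln n)/n$ with $p\to0$, in particular the borderline where $pa$ grows slowly and $\Var(B_n)\sim\log(1/p)$ is of the same order as $\Var(A_n)$. To handle this, I would first try a unified Lyapunov-type bound. Write $n-X_n$ as a sum of $1$-dependent blocks of length $\asymp1/p$, treating the tail region $[a,n-1]$ as one extra block whose centered third moment is $O(\log(1/p))$. Then check that the Lyapunov ratio tends to zero.
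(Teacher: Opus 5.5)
Your treatment of the threshold and of regimes (1) and (2) is essentially the paper's. The paper locates the crossover by solving $p=(1-p)^n$ via the Lambert $W$ function, $p_n\sim W(n)/n$, and then uses $X_n\law 1+\operatorname{Bin}(n-1,q)$ exactly as you do. Your direct comparison of $np$ with $\log(1/p)$ is equivalent, and it additionally yields $p(n-\nu)\to\infty$, which you need later. The $\ln\ln n$ correction itself is immaterial under $\ll$ and $\gg$, since $(\ln n-\ln\ln n)/n\sim\ln n/n$.

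Regime (3) is where you genuinely diverge. The paper computes $\Var(X_n)=npq(1-3pq)+o(1)\to\infty$ from Theorem~\ref{thm:MeanAndVariance}, then cites the log-concavity of Theorem~\ref{thm:logConcave} together with ``Harper's method''. You instead work pathwise with Corollary~\ref{cor:zerorunsdistribution}(3). The $1$-dependent run count $A_n$ is normal by a dependency-graph Stein bound of order $(apq)^{-1/2}$. The mixed-binomial tail $B_n$ is normal because its conditional mean $\log(1/p)-O_{\P}(1)$ diverges, while its random length only shifts it by $O_{\P}(1)$. The two pieces are then glued through the shared bit. This is longer but self-contained, and it is in fact the sturdier argument. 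Harper's method needs the probability generating function to have only real zeros, so that $X_n$ is a sum of independent Bernoulli variables. Log-concavity together with diverging variance does not force a Gaussian limit: geometric laws are log-concave with shifted-exponential limits. The paper never establishes real-rootedness, and it already fails for $d_{\nu+1}(v)=(q+pv)^{\nu-1}+q^\nu(v-1)$ once $\nu\ge 4$.

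Two tightenings of your regime (3). First, the case split on $\Var(B_n)$ versus $\Var(A_n)$ is unnecessary, and it is not exhaustive without passing to subsequences. Set $\tau'=\min\{j\ge a+1:\varepsilon_j=1\}$ and $B_n'=\sum_{j>\tau'}\varepsilon_j$. Then $|B_n-B_n'|\le 1$ and $B_n'$ is exactly independent of $A_n$, so the characteristic function factorizes. A subsequence argument on $\Var(A_n)/(\Var(A_n)+\Var(B_n))$ then finishes, and the centering is free because $\E(n-X_n)=\E A_n+\E B_n$ exactly.

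Second, drop the fallback single-block Lyapunov plan. The tail block has absolute third central moment of order $(\log(1/p))^{3/2}$; only its third cumulant is $O(\log(1/p))$. So its Lyapunov ratio stays bounded away from zero in exactly the borderline case you were worried about. No uniformity in $p$ is actually needed by the argument above, so that obstacle does not arise.
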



\section{Optimal strategy and generating function for \texorpdfstring{$X_n$}{Xn}}
\label{sec:generatingfunction}
Recall our discussion preceeding Proposition~\ref{prop:Strategy}. This discussion suggests that given the first card $\fc_n=t$, we get the last $(t-1)$ cards correct. Furthermore, the first guess is correct if and only if $\fc_n=\maxP_n$. And, conditional on $\fc_n=t$, the remaining $(n-t)$ cards are distributed according to an asymmetric single-shelf shuffle. This leads to the following stochastic recurrence for the score $X_n$.
\begin{theorem}
\label{thm:DistAssymetric}
Let $p\in (0, 1)$ be fixed. Let $X_n=X_n(p)$ be the number of correctly guessed cards, starting with a deck of $n$ cards, after an asymmetric one-time single-shelf shuffle with parameter $p \in(0,1)$. Then, $X_n$ satisfies the following recurrence for $n\geq 2$:
\begin{equation}
\label{eqn:Xn_DistEqn3}
X_n \law X^{\ast}_{n-\fc_n}+\fc_n-1 + \mathbb{I}_n(\fc_n=\maxP_n), 
\end{equation}
with initial values $X_1=1$ and $X_0=0$, where $\fc_n$ denotes the first card in a shuffled deck.
Here, $\mathbb{I}_n(\fc_n=\maxP_n)$ denotes the indicator of the event that the first guess is correct, and $X^{\ast}$ denotes an independent copy of $X$.
\end{theorem}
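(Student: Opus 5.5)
We prove the recurrence by conditioning on the first card $\fc_n=t$ and reading the shuffle off the shelf word $(\varepsilon_1,\dots,\varepsilon_{n-1})$. In the shuffled deck, the cards labelled $\up$ sit above card $n$ in increasing order and the cards labelled $\down$ sit below it in decreasing order. So $\fc_n=t$ with $t<n$ means that $\varepsilon_1=\dots=\varepsilon_{t-1}=0$ and $\varepsilon_t=1$. The event $\fc_n=n$ means that all $\varepsilon_j=0$.

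The first step is to describe the deck on the event $\fc_n=t$. Cards $1,\dots,t-1$ occupy the bottom $t-1$ positions, in decreasing order from top to bottom, so that card $1$ is last. Positions $2,\dots,n-t+1$ hold the cards $t+1,\dots,n$. Their arrangement is determined by $(\varepsilon_{t+1},\dots,\varepsilon_{n-1})$ in exactly the way an asymmetric single-shelf shuffle of $n-t$ cards is determined by its shelf word. This is because relabelling $t+s\mapsto s$ preserves the rule ``$\up$ cards increasing above the top card, $\down$ cards decreasing below it''. The vector $(\varepsilon_{t+1},\dots,\varepsilon_{n-1})$ is independent of $(\varepsilon_1,\dots,\varepsilon_t)$, hence independent of $\{\fc_n=t\}$. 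Therefore, conditional on $\fc_n=t$, the middle block is an asymmetric shelf shuffle of $n-t$ cards, independent of the first card. When $t=n$, the middle block is empty, which is consistent with $X_0=0$.

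The second step is to decompose the score under the strategy of Proposition~\ref{prop:Strategy}, which is described recursively. The first guess is $\maxP_n$, and it is correct exactly on $\{\fc_n=\maxP_n\}$. After $t$ is revealed, the player knows that every label below $t$ is already placed at the bottom, so the remaining problem is equivalent to guessing a fresh deck. During the middle block, the strategy is the optimal strategy for the relabelled deck of size $n-t$ (guess from $\{j+1,n\}$ by comparing $p$ with $(1-p)^{n-j-1}$, and so on). The number of correct guesses in that block is therefore a copy of $X_{n-t}$ that is independent of $\fc_n$.

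There is one point to check here. The strategy's rule ``guess decreasing once $n$ or $n-1$ appears'' must continue seamlessly into the bottom block. When the middle block is exhausted, its decreasing tail ends just above card $t-1$. Continuing the decreasing guesses through labels $t-1,\dots,1$ gets all $t-1$ bottom cards right, and these guesses are forced. They are also compatible with the sub-deck's own descending phase, since the player simply skips the label $t$ already seen. Adding the three contributions gives $X_n \law X^*_{n-\fc_n}+\fc_n-1+\mathbb{I}(\fc_n=\maxP_n)$.

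Finally, the base cases follow directly: $X_1=1$ since a single card is always guessed correctly, and $X_0=0$.

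The main obstacle is making rigorous the claim that the optimal continuation on the middle block coincides with the sub-deck's optimal strategy and yields exactly $X_{n-t}$. This needs the conditional independence together with the observation that all posterior information about the middle block comes from $(\varepsilon_{t+1},\dots)$. I would state this as a short lemma: conditional on the revealed prefix, the unrevealed middle cards form a shelf shuffle of the appropriate size. The lemma is proved by the same word-based argument, applied inductively.
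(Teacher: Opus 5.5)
Your proposal is correct and follows the same route as the paper: condition on $\fc_n=t$, note that cards $1,\dots,t-1$ sit at the bottom in decreasing order (giving $t-1$ guaranteed hits), and observe that cards $t+1,\dots,n$ form an independent shelf shuffle of size $n-t$, on which the strategy restricts to the optimal strategy for the smaller deck. Your shelf-word argument for independence, and your check that the descending phase passes seamlessly into the bottom block, make explicit what the paper's short proof takes for granted.
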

\begin{proof}
Notice that once the first card $\fc_n$ is shown, the cards $\{1,2,\ldots,\fc_{n}-1\}$ will be at the bottom of the deck in descending order, which corresponds to $\fc_n-1$ guesses that are guaranteed to be correct. Then, we know that we are left with the $n-\fc_n$ cards $\{\fc_n +1,\ldots,n\}$ in unknown positions with the next guess. By definition, these cards must be distributed as an asymmetric shelf-shuffle independent of the first card, and the equality in distribution follows.
\end{proof}
This recurrence relation plays a foundational role in our analysis, as it yields the generating function of the number of correct guesses under the optimal strategy. All of our main results in Section \ref{sec:Proofs} crucially rely on the careful analysis of this generating function. To make use of Theorem~\ref{thm:DistAssymetric}, we also need a precise description of the distribution of $\fc_n$. We record it as a lemma below.
\begin{lem}
\label{lem:firstcard}
The distribution of the first card $\fc_n$ in a single asymmetric shelf shuffle with parameter $p\in(0,1)$ is given as follows:
\begin{equation}
\label{eq:fc}
\P(\fc_n=i)=p(1-p)^{i-1},\quad 1\le i \le n-1
\end{equation}
and 
\[
\P(\fc_n=n)=(1-p)^{n-1}.
\]
\end{lem}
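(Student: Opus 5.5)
The plan is to read the distribution of $\fc_n$ directly off the description of the shuffle in terms of the up/down labels $\varepsilon_1,\dots,\varepsilon_{n-1}$, which are independent Bernoulli$(p)$. Alternatively, one could specialize Proposition~\ref{prop:posi} to $j=1$. I will do both, using the second as a check.

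\textbf{Direct argument.} In the shuffled deck, the cards labelled $\up$ sit above card $n$ in increasing order, and the cards labelled $\down$ sit below card $n$. So the top card is the smallest $i\le n-1$ with $\varepsilon_i=1$. If no such $i$ exists, the top card is $n$ itself.

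It follows that $\{\fc_n=i\}$ for $1\le i\le n-1$ is exactly the event
\[
\varepsilon_1=\dots=\varepsilon_{i-1}=0,\qquad \varepsilon_i=1.
\]
By independence this has probability $(1-p)^{i-1}p$. Similarly, $\{\fc_n=n\}$ is the event $\varepsilon_1=\dots=\varepsilon_{n-1}=0$, which has probability $(1-p)^{n-1}$.

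\textbf{Consistency check.} The probabilities sum to $1$ by the geometric sum:
\[
\sum_{i=1}^{n-1}p(1-p)^{i-1}=1-(1-p)^{n-1}.
\]

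\textbf{Check via Proposition~\ref{prop:posi}.} Set $j=1$.
\begin{itemize}
\item The first term is $p(1-p)^{i-1}\binom{i-1}{0}=p(1-p)^{i-1}$.
\item The second term contains $\binom{i-1}{n-1}$, which vanishes unless $i=n$. For $i=n$ it equals $p^{0}(1-p)^{n}=(1-p)^n$.
\end{itemize}
For $i=n$, the first term then gives $p(1-p)^{n-1}$, and adding the two yields $(1-p)^{n-1}$, as required.

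There is no real obstacle here. The only point needing care is the boundary case $i=n$, where both branches of the position formula contribute. Equivalently, in the direct argument, it is the case where card $n$ has no label of its own.
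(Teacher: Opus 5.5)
Your proposal is correct, and your direct argument is essentially the paper's proof: card $i\le n-1$ is on top exactly when cards $1,\dots,i-1$ go to the bottom and card $i$ goes to the top, and card $n$ is on top exactly when all of $1,\dots,n-1$ go to the bottom. Your cross-check via Proposition~\ref{prop:posi} at $j=1$, including the $i=n$ case where both terms contribute, is also correct but not needed.
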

\begin{proof}
    Fix $i\in[n-1]$. Recalling Proposition \ref{prop:Strategy}, for card $i$ to be at the top of the shuffled deck, we need every card in $[i-1]$ to be sent to the bottom of the pile and card $i$ to be sent to the top. This happens with probability $p(1-p)^{i-1}$. For card $n$ to be at the top, we need all the cards in $[n-1]$ to be sent to the bottom, which happens with probability $(1-p)^{n-1}$.
\end{proof}

\begin{corollary}[A recurrence for the generating function]
   Let $s_n(v)=\E[v^{X_n}]$ denote the generating function of $X_n$. Then, $s_0(v)=1$, $s_1(v)=v$, and for $2\le n\le \nu$ we have
\begin{equation}
\label{eq:asym-rec-small}
s_n(v)
=
p\,s_{n-1}(v)
+\sum_{j=2}^{n-1}pq^{j-1}v^{j-1}s_{n-j}(v)
+q^{n-1}v^n,
\end{equation}
whereas for $n>\nu$ we have
\begin{equation}
\label{eq:asym-rec-large}
s_n(v)
=
pv\,s_{n-1}(v)
+\sum_{j=2}^{n-1}pq^{j-1}v^{j-1}s_{n-j}(v)
+q^{n-1}v^{n-1}.
\end{equation}

\end{corollary}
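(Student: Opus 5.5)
We take expectations of $v^{X_n}$ in the distributional recurrence \eqref{eqn:Xn_DistEqn3} of Theorem~\ref{thm:DistAssymetric}, conditioning on the value of the first card $\fc_n$. Since $X^{\ast}_{n-\fc_n}$ is independent of $\fc_n$, we have
\[
s_n(v)=\sum_{t=1}^{n}\P(\fc_n=t)\,v^{t-1}\,v^{\mathbb{I}(t=\maxP_n)}\,s_{n-t}(v),
\]
and Lemma~\ref{lem:firstcard} supplies $\P(\fc_n=t)=pq^{t-1}$ for $1\le t\le n-1$ and $\P(\fc_n=n)=q^{n-1}$. The initial values $s_0=1$ and $s_1=v$ come directly from $X_0=0$ and $X_1=1$.

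The only thing left is to find where the extra factor $v$ from the indicator goes. By Proposition~\ref{prop:Strategy}, $\maxP_n=n$ when $2\le n\le\nu$ and $\maxP_n=1$ when $n>\nu$. When $p\ge\frac12$ we have $\nu=1$, so the first case is empty and $\maxP_n=1$ for every $n\ge2$, which is consistent. The identification of $\maxP_n$ itself comes from comparing $p$ with $q^{n-1}$. Indeed $p\ge q^{n-1}$ if and only if $n-1\ge \log p/\log q$, which for $n\le\nu$ fails (up to the tie convention, where ties go to $n$) and for $n\ge\nu+1$ holds. The remaining probabilities $pq^{t-1}$ with $2\le t\le n-1$ are never maximal, since $pq^{t-1}<p$.

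Case $2\le n\le\nu$. The indicator is $1$ only for $t=n$. The $t=1$ term is $p\,s_{n-1}(v)$, the middle terms give $\sum_{j=2}^{n-1}pq^{j-1}v^{j-1}s_{n-j}(v)$, and the $t=n$ term is $q^{n-1}v^{n-1}\cdot v\cdot s_0=q^{n-1}v^n$. Together these give \eqref{eq:asym-rec-small}.

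Case $n>\nu$. The indicator is $1$ only for $t=1$, which gives $pv\,s_{n-1}(v)$. The middle terms are unchanged, and the $t=n$ term is $q^{n-1}v^{n-1}$. Together these give \eqref{eq:asym-rec-large}.

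The main point needing care is the boundary value $n=\nu$ versus $n=\nu+1$ in the definition of $\nu$. At $n=\nu+1$ we need $p\ge q^{\nu}$, i.e.\ $\nu\ge\log p/\log q$, which holds since $\nu=\lfloor\log p/\log q\rfloor+1$. At $n=\nu$ we need $p\le q^{\nu-1}$, i.e.\ $\nu-1\le\log p/\log q$, which holds by the floor. Everything else is bookkeeping.
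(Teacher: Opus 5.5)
Your proposal is correct and is the derivation the paper intends: the paper states this corollary as an immediate consequence of Theorem~\ref{thm:DistAssymetric} and Lemma~\ref{lem:firstcard}, obtained by conditioning on $\fc_n$ and placing the extra factor $v$ on the $t=n$ term when $n\le\nu$ and on the $t=1$ term when $n>\nu$. One small precision: ties are broken toward $n$, so for $n\ge\nu+1$ you need the strict inequality $p>q^{n-1}$ rather than $p\ge q^{n-1}$; this does hold, because $\nu=\lfloor\log p/\log q\rfloor+1>\log p/\log q$.
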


Given a sequence of random variables $(X_n)_{n\geq 1}$, we define the generating function $s_n(v)$ of $X_n$ and the bivariate generating function $S(z, v)$ as follows:
\[s_n(v):=\E[v^{X_n}], \qquad S(z,v):=\sum_{n\geq 1}\E[v^{X_n}]z^n.\] 
We further write
\[
S_{\le \nu}(z,v):=\sum_{n=1}^{\nu}s_n(v)z^n,\qquad
S_{>\nu}(z,v):=\sum_{n>\nu}s_n(v)z^n.
\]

\begin{proposition}[Unified bivariate generating function]
\label{prop:asym-bgf}
For $1\le n\le \nu$ we have
\[
s_n(v)=v\bigl(p+qv\bigr)^{n-1},
\]
and 
\[
s_{\nu+1}(v)=v^2(p+qv)^{\nu-1}+q^\nu v^\nu(1-v).
\]
Therefore
\[
S_{\le \nu}(z,v)
=\sum_{n=1}^{\nu}s_n(v)z^n
=
\frac{zv\bigl(1-z^\nu(p+qv)^\nu\bigr)}{1-z(p+qv)}.
\]
Moreover, the full bivariate generating function is then given by
\begin{equation}
\label{eq:asym-bgf-unified}
S(z,v)
=
S_{\le \nu}(z,v)
+
\frac{z^{\nu+1}\Bigl(s_{\nu+1}(v)+pq\,v(1-v)z\,s_\nu(v)\Bigr)}
     {1-vz+pq\,v(v-1)z^2}.
\end{equation}
\end{proposition}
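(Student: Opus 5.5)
Both recurrences \eqref{eq:asym-rec-small} and \eqref{eq:asym-rec-large} have a convolution form, so the plan is to settle the small-$n$ formulas by induction and then turn the large-$n$ recurrence into a functional equation for $S$. The one nontrivial point is that the kernel obtained directly has a factor $1-zq v$ that must cancel.

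\textbf{Step 1: $n\le\nu$.} We prove $s_n(v)=v(p+qv)^{n-1}$ by strong induction using \eqref{eq:asym-rec-small}. It suffices to check that $v(p+qv)^{n-1}$ satisfies that recurrence. A cleaner route is to use Corollary~\ref{cor:zerorunsdistribution}(2): there $n-X_n\sim\operatorname{Bin}(n-1,p)$, so $X_n=1+\operatorname{Bin}(n-1,q)$, which gives the formula at once. Summing the geometric series then yields $S_{\le\nu}$.

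\textbf{Step 2: $s_{\nu+1}$.} We apply \eqref{eq:asym-rec-large} with $n=\nu+1$ and substitute the Step 1 formulas. Write $r=p+qv$. The sum $\sum_{j=2}^{\nu}pq^{j-1}v^{j-1}\,v\,r^{\nu-j}$ is geometric with ratio $qv/r$, and evaluates to $pqv^2\,\bigl(r^{\nu-1}-(qv)^{\nu-1}\bigr)/(r-qv)$. Since $r-qv=p$, this equals $qv^2\bigl(r^{\nu-1}-(qv)^{\nu-1}\bigr)$. Adding $pv\cdot v r^{\nu-1}$ and $q^\nu v^\nu$ gives
\[
s_{\nu+1}=v^2r^{\nu-1}-q^\nu v^{\nu+1}+q^\nu v^\nu ,
\]
which is the claimed formula.

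\textbf{Step 3: the functional equation for $n>\nu$.} Set $Q(z)=\sum_{j\ge1}pq^{j-1}v^{j-1}z^j=pz/(1-qvz)$.

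\begin{itemize}
\item Write \eqref{eq:asym-rec-large} as $s_n=\sum_{j\ge1}pq^{j-1}v^{j-1}s_{n-j}+p(v-1)s_{n-1}+(q^{n-1}v^{n-1}-pq^{n-1}v^{n-1}s_0)$. Here the $j=n$ term, with $s_0=1$, is absorbed and corrected, and the $j=1$ term is $ps_{n-1}$ plus $p(v-1)s_{n-1}$.
\item The final bracket simplifies to $q^{n}v^{n-1}$.
\item Multiply by $(1-qvz)$. Applying $(1-qvz)$ to a sequence kills the geometric terms $q^nv^{n-1}z^n$ except at the boundary. So a second-order linear recurrence should emerge: $s_n-qv s_{n-1}=p s_{n-1}+p(v-1)(s_{n-1}-qv s_{n-2})$ for $n\ge\nu+2$.
\item Check it: this reads $s_n=v s_{n-1}-pq v(v-1)s_{n-2}$, with characteristic polynomial $1-vz+pqv(v-1)z^2$, matching the denominator in \eqref{eq:asym-bgf-unified}.
\end{itemize}

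\textbf{Step 4: closing up.} For $n\ge\nu+2$, subtract $qv\times$(recurrence at $n-1$) from the recurrence at $n$. The convolution sums telescope, since the $j$-th term at $n$ minus $qv$ times the $(j-1)$-th term at $n-1$ leaves only the $j=2$ term and boundary terms. We must verify that the boundary terms cancel exactly, including the lower cutoff $j\le n-1$ and the $s_0$ index. I expect this bookkeeping to be the main obstacle. If it fails by a boundary term, I will redo it in generating-function form, where the boundaries become explicit polynomials.

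Given the second-order recurrence from index $\nu+2$ on, we have
\[
\bigl(1-vz+pqv(v-1)z^2\bigr)S_{>\nu}=z^{\nu+1}s_{\nu+1}+z^{\nu+2}\bigl(s_{\nu+2}-v s_{\nu+1}\bigr).
\]
By the recurrence at $n=\nu+2$, $s_{\nu+2}-vs_{\nu+1}=-pqv(v-1)s_\nu=pqv(1-v)s_\nu$. This gives exactly the numerator in \eqref{eq:asym-bgf-unified}.
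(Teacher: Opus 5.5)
Your proposal is correct and follows essentially the same route as the paper: verify $v(p+qv)^{n-1}$ in the small-$n$ recurrence, substitute into the large recurrence at $n=\nu+1$, subtract $qv$ times the large recurrence at $n-1$ to get $s_n=v\,s_{n-1}+pq\,v(1-v)\,s_{n-2}$ for $n\ge\nu+2$, and translate this into the rational form of $S_{>\nu}$. The boundary bookkeeping you worried about in Step~4 is harmless: the terms $q^{n-1}v^{n-1}$ cancel exactly and only the $j=2$ summand $pq\,v\,s_{n-2}$ survives, which is exactly what your Step~3 computation with the full convolution (including $s_0=1$) already shows.
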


\begin{proof}
It can be verified by direct computation that 
\[
s_n(v)=v(p+qv)^{n-1},\qquad 1\le n\le \nu,
\]
satisfies the reccurence~\eqref{eq:asym-rec-small}.
This proves the formula for $s_n(v)$ and, by summing the resulting geometric series,
\[
S_{\le \nu}(z,v)=\sum_{n=1}^{\nu}v(p+qv)^{n-1}z^n
=
\frac{zv\bigl(1-z^\nu(p+qv)^\nu\bigr)}{1-z(p+qv)}.
\]
Substituting $s_1(v),\dots,s_\nu(v)$ into
\eqref{eq:asym-rec-large} with $n=\nu+1$ gives
\[
\begin{aligned}
s_{\nu+1}(v)
=v^2(p+qv)^{\nu-1}+q^\nu v^\nu(1-v).
\end{aligned}
\]

We now derive a universal second-order recurrence for the tail. For $n\ge \nu+2$,
multiply \eqref{eq:asym-rec-large} with $n-1$ in place of $n$ by $qv$ and subtract. This yields the following recurrence:
\begin{equation}
\label{eq:asym-rec-second-order}
s_n(v)=v\,s_{n-1}(v)+pq\,v(1-v)\,s_{n-2}(v),\qquad n\ge \nu+2.
\end{equation}
Since $S_{>\nu}(z,v)- s_{\nu+1}(v)z^{\nu+1}= 
\sum_{n\ge \nu+2}s_n(v)z^n$, 
we get
\[
\begin{aligned}
S_{>\nu}(z,v)-s_{\nu+1}(v)z^{\nu+1}
&=
vz\sum_{n\ge \nu+2}s_{n-1}(v)z^{n-1}
+pq\,v(1-v)z^2\sum_{n\ge \nu+2}s_{n-2}(v)z^{n-2}\\
&=
vz\,S_{>\nu}(z,v)
+pq\,v(1-v)z^2\Bigl(S_{>\nu}(z,v)+s_\nu(v)z^\nu\Bigr).
\end{aligned}
\]
Rearranging yields
\[
\bigl(1-vz+pq\,v(v-1)z^2\bigr)S_{>\nu}(z,v)
=
z^{\nu+1}\Bigl(s_{\nu+1}(v)+pq\,v(1-v)z\,s_\nu(v)\Bigr).
\]
\end{proof}

\begin{corollary}[The case $p\ge \frac{1}{2}$]
\label{cor:asym-bgf-upper}
If $p\ge \frac12$, then $\nu=1$ and
\[
S(z,v)
=
\frac{zv\bigl(1+q(1-v)z\bigr)}{1-vz+pq\,v(v-1)z^2}.
\]
\end{corollary}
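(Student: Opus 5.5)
This is a direct specialization of Proposition~\ref{prop:asym-bgf}, so the plan is to substitute $\nu=1$ into \eqref{eq:asym-bgf-unified} and simplify.

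First we check that $\nu=1$ when $p\ge\frac12$; this is immediate from the definition \eqref{eq:defofnu}. With $\nu=1$, Proposition~\ref{prop:asym-bgf} gives the following ingredients:
\begin{itemize}
\item $s_1(v)=v$;
\item $S_{\le 1}(z,v)=zv$;
\item $s_2(v)=v^2+qv(1-v)$.
\end{itemize}
The formula for $s_2$ can be cross-checked against \eqref{eq:asym-rec-large} with $n=2$. In that case the middle sum is empty, so $s_2=pv\cdot v+qv$. This agrees with $v^2+qv-qv^2=pv^2+qv$, since $1-q=p$.

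Next we put everything over the common denominator $D(z,v):=1-vz+pq\,v(v-1)z^2$. The numerator of the tail term is $z^2\bigl(s_2(v)+pq\,v(1-v)z\,v\bigr)$, so
\[
S(z,v)=\frac{zv\,D(z,v)+z^2\bigl(pv^2+qv+pq\,v^2(1-v)z\bigr)}{D(z,v)}.
\]
Expanding $zv\,D(z,v)$ gives $zv-v^2z^2+pq\,v^2(v-1)z^3$.

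Adding this to the tail numerator, the $z^3$ terms $pq\,v^2(v-1)z^3$ and $pq\,v^2(1-v)z^3$ cancel. The $z^2$ terms combine to
\[
-v^2+pv^2+qv=qv(1-v).
\]
The numerator therefore equals $zv+qv(1-v)z^2=zv\bigl(1+q(1-v)z\bigr)$, which is the claimed expression.

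There is no real obstacle here beyond careful bookkeeping. The only point needing attention is that the recurrence \eqref{eq:asym-rec-second-order} used in Proposition~\ref{prop:asym-bgf} is valid from $n\ge\nu+2=3$, and that proposition already establishes this.
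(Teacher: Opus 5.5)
Your proposal is correct and is the paper's own proof: specialize Proposition~\ref{prop:asym-bgf} to $\nu=1$ using $S_{\le 1}=zv$, $s_1=v$ and $s_2=qv+pv^2$, then simplify over the common denominator. You also carry out the algebra the paper leaves to the reader, namely the cancellation of the $z^3$ terms and the $z^2$ coefficient $qv(1-v)$.
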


\begin{proof}
If $p\ge \frac12$, then $\nu=1$, so
\[
S_{\le 1}(z,v)=zv,\qquad s_1(v)=v,\qquad s_2(v)=qv+pv^2.
\]
Substituting these into \eqref{eq:asym-bgf-unified} gives the desired result.
\end{proof}

\begin{corollary}[The initial regime $0<p<\frac12$, $n\le \nu$]
\label{cor:asym-bgf-initial}
If $0<p<\frac12$, then
\[
S_{\le \nu}(z,v)
=
\frac{zv\bigl(1-z^\nu(p+qv)^\nu\bigr)}{1-z(p+qv)}.
\]
Equivalently,
\[
X_n \law 1+\operatorname{Bin}(n-1,q),
\qquad 1\le n\le \nu.
\]
\end{corollary}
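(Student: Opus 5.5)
The statement to prove is Corollary~\ref{cor:asym-bgf-initial}: for $0<p<\frac12$ and $1\le n\le \nu$, the generating function $S_{\le\nu}$ has the stated closed form and $X_n\law 1+\operatorname{Bin}(n-1,q)$. Both parts follow from the first assertion of Proposition~\ref{prop:asym-bgf}, namely $s_n(v)=v(p+qv)^{n-1}$ for $1\le n\le \nu$. The plan is to verify that formula carefully, since the proposition only says it "can be verified by direct computation".

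First I check that $s_n(v)=v(p+qv)^{n-1}$ satisfies the recurrence \eqref{eq:asym-rec-small} for $2\le n\le\nu$, arguing by induction on $n$ with $s_1=v$ (and $s_0=1$ only where needed). Substituting the induction hypothesis, the right-hand side becomes
\[
p\,v(p+qv)^{n-2}+\sum_{j=2}^{n-1}pq^{j-1}v^{j-1}\,v(p+qv)^{n-j-1}+q^{n-1}v^n .
\]
Set $a=p+qv$ and $b=qv$. The first two terms combine into $pv\sum_{i=0}^{n-2}b^{i}a^{n-2-i}$. This is the telescoping sum $pv\,(a^{n-1}-b^{n-1})/(a-b)$, and since $a-b=p$ it equals $v\bigl(a^{n-1}-b^{n-1}\bigr)$. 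Adding $q^{n-1}v^n=v\,b^{n-1}$ gives exactly $v\,a^{n-1}=v(p+qv)^{n-1}$, which closes the induction.

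The main point requiring care is whether \eqref{eq:asym-rec-small} really governs all $n\le\nu$. This relies on Proposition~\ref{prop:Strategy}: for $n\le\nu$ the optimal first guess is $n$, because $(1-p)^{n-1}\ge p$ exactly when $n-1\le \log p/\log(1-p)$. Within the recurrence, every reduced deck has size $n-j<n\le\nu$, so the induction only ever calls on sizes for which the formula is already established.

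Once the formula for $s_n$ is in hand, $v(p+qv)^{n-1}$ is the probability generating function of $1+\operatorname{Bin}(n-1,q)$, and uniqueness of probability generating functions gives the distributional identity $X_n\law 1+\operatorname{Bin}(n-1,q)$. Finally, $S_{\le\nu}(z,v)=zv\sum_{m=0}^{\nu-1}\bigl(z(p+qv)\bigr)^m$, and the finite geometric sum gives the stated closed form.
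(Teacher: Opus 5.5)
Your proposal is correct and follows the paper's route. The paper likewise deduces the corollary from the formula $s_n(v)=v(p+qv)^{n-1}$ in Proposition~\ref{prop:asym-bgf}, sums the geometric series, and identifies the probability generating function of $1+\operatorname{Bin}(n-1,q)$. Your telescoping computation with $a=p+qv$, $b=qv$, $a-b=p$ just makes explicit the ``direct computation'' that the paper leaves to the reader; note also that $s_0$ is never actually needed, since $n-j\ge 1$ throughout.
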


\begin{proof}
The first assertion is contained in Proposition~\ref{prop:asym-bgf}. Since the
probability generating function of $1+\operatorname{Bin}(n-1,q)$ is
$v(p+qv)^{n-1}$, the distributional identity follows.

\end{proof}

\begin{proposition}[Probability generating functions $s_n(v)$]
\label{prop:asym-pgf}
Let
\[
\Delta(v):=v^2-4pq\,v(v-1),
\qquad
\lambda_\pm(v):=\frac{v\pm \sqrt{\Delta(v)}}{2},
\]
where the branch of $\sqrt{\Delta(v)}$ is chosen so that $\sqrt{\Delta(1)}=1$.
Note that $\lambda_\pm(v)$ are the two roots of
\[
\lambda^2-v\lambda+pq\,v(v-1)=0.
\]

\begin{enumerate}
\item If $p\ge \frac12$, then for every $n\ge 1$,
\[
s_n(v)
=
\frac{\bigl(s_2(v)-\lambda_-(v)s_1(v)\bigr)\lambda_+(v)^{\,n-1}
+\bigl(\lambda_+(v)s_1(v)-s_2(v)\bigr)\lambda_-(v)^{\,n-1}}
{\lambda_+(v)-\lambda_-(v)},
\]
where
\[
s_1(v)=v,\qquad s_2(v)=qv+pv^2.
\]

\item If $0<p<\frac12$ and $1\le n\le \nu$, then
\[
s_n(v)=v(p+qv)^{n-1}.
\]

\item If $0<p<\frac12$ and $n\ge \nu$, then
\[
s_n(v)
=
\frac{\bigl(s_{\nu+1}(v)-\lambda_-(v)s_\nu(v)\bigr)\lambda_+(v)^{\,n-\nu}
+\bigl(\lambda_+(v)s_\nu(v)-s_{\nu+1}(v)\bigr)\lambda_-(v)^{\,n-\nu}}
{\lambda_+(v)-\lambda_-(v)},
\]
where
\[
s_\nu(v)=v(p+qv)^{\nu-1},
\qquad
s_{\nu+1}(v)=v^2(p+qv)^{\nu-1}+q^\nu v^\nu(1-v).
\]
Equivalently, for $n\ge \nu+2$,
\[
s_n(v)=v\,s_{n-1}(v)+pq\,v(1-v)\,s_{n-2}(v).
\]
\end{enumerate}
\end{proposition}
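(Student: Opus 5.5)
The plan is to solve the linear recurrences for $s_n(v)$ that follow from Proposition~\ref{prop:asym-bgf}, treating $v$ as a fixed parameter.

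\emph{Part (2).} This is already contained in Proposition~\ref{prop:asym-bgf}. There it is checked that $v(p+qv)^{n-1}$ satisfies \eqref{eq:asym-rec-small} for $1\le n\le \nu$.

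\emph{Second-order recurrence.} For parts (1) and (3), the key input is the second-order recurrence \eqref{eq:asym-rec-second-order},
\[
s_n(v)=v\,s_{n-1}(v)+pq\,v(1-v)\,s_{n-2}(v), \qquad n\ge \nu+2,
\]
obtained in the proof of Proposition~\ref{prop:asym-bgf}. It comes from subtracting $qv$ times the $(n-1)$-instance of \eqref{eq:asym-rec-large} from the $n$-instance. I will recheck this step briefly. The convolution terms $\sum_{j\ge2} pq^{j-1}v^{j-1}s_{n-j}$ telescope against the shifted sum, leaving $pq v\, s_{n-2}$. The boundary terms $q^{n-1}v^{n-1}$ cancel. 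Combined with $pv s_{n-1} - qv\cdot pv s_{n-2}$, this gives exactly the stated coefficients. The step needs \eqref{eq:asym-rec-large} at both $n$ and $n-1$, hence $n-1>\nu$, i.e. $n\ge\nu+2$.

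\emph{Characteristic roots.} The characteristic polynomial of this recurrence is $\lambda^2-v\lambda+pqv(v-1)$. Its roots are $\lambda_\pm(v)$, with discriminant $\Delta(v)$.

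\emph{Solving the recurrence.} Wherever $\lambda_+\ne\lambda_-$, i.e. $\Delta(v)\neq 0$, the general solution on the index range $n\ge\nu$ is $s_n=A\lambda_+^{\,n-\nu}+B\lambda_-^{\,n-\nu}$. Imposing the two initial values $s_\nu$ and $s_{\nu+1}$ gives $A+B=s_\nu$ and $A\lambda_++B\lambda_-=s_{\nu+1}$. Hence
\[
A=\frac{s_{\nu+1}-\lambda_-s_\nu}{\lambda_+-\lambda_-},\qquad B=\frac{\lambda_+s_\nu-s_{\nu+1}}{\lambda_+-\lambda_-},
\]
which is exactly formula (3). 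The cases $n=\nu$ and $n=\nu+1$ are immediate checks, and for $n\ge \nu+2$ the formula follows by induction from the recurrence. The explicit values of $s_\nu$ and $s_{\nu+1}$ come from Proposition~\ref{prop:asym-bgf}.

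\emph{Part (1).} This is the special case $\nu=1$, using $s_1=v$ and $s_2=qv+pv^2$. Here the recurrence holds for $n\ge3$.

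\emph{Main obstacle.} The only delicate point is the degenerate set where $\Delta(v)=0$, i.e. $\lambda_+=\lambda_-$, together with the branch choice of $\sqrt{\Delta}$. The expression is symmetric in $\lambda_\pm$. Indeed, swapping them swaps numerator terms and flips the sign of the denominator. Therefore it is a polynomial in $\lambda_\pm$ symmetric under exchange, of divided-difference type, and hence a polynomial in $v$ and $\sqrt{\Delta}^2=\Delta$. So it is independent of the branch and extends continuously to $\Delta(v)=0$. Since both sides are polynomials in $v$ agreeing off a finite set, they agree everywhere.
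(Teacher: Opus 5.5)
Your proposal is correct and follows essentially the same route as the paper: obtain the second-order recurrence \eqref{eq:asym-rec-second-order} for $n\ge\nu+2$, write $s_n=A\lambda_+^{\,n-\nu}+B\lambda_-^{\,n-\nu}$ for $n\ge\nu$, solve for $A,B$ from $s_\nu$ and $s_{\nu+1}$, and specialize to $\nu=1$ when $p\ge\frac12$. Your extra observation fills in a point the paper leaves implicit: the closed form is symmetric in $\lambda_\pm$, hence a polynomial in $v$ that does not depend on the branch of $\sqrt{\Delta}$ and remains valid where $\Delta(v)=0$.
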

\begin{proof}
Part (2) is contained in Proposition~\ref{prop:asym-bgf}. For $n\ge \nu+2$,
Proposition~\ref{prop:asym-bgf} gives the second-order recurrence
\[
s_n(v)-v\,s_{n-1}(v)-pq\,v(1-v)\,s_{n-2}(v)=0.
\]
Its characteristic polynomial is
\[
\lambda^2-v\lambda+pq\,v(v-1)=0,
\]
with roots $\lambda_\pm(v)$. Hence
\[
s_n(v)=A(v)\lambda_+(v)^{\,n-\nu}+B(v)\lambda_-(v)^{\,n-\nu},
\qquad n\ge \nu,
\]
for suitable functions $A(v),B(v)$. Solving
\[
A(v)+B(v)=s_\nu(v),
\qquad
A(v)\lambda_+(v)+B(v)\lambda_-(v)=s_{\nu+1}(v)
\]
gives the stated formula. The case $p\ge \frac12$ is the specialization $\nu=1$.
\end{proof}

\section{Proofs}
\label{sec:Proofs}
This section contains the proofs of the main results in the introduction.
\subsection{Mean, Variance, and Exact Distribution of \texorpdfstring{$X_n$}{Xn}} 
In this subsection, we find the mean and variance of the number of correct guesses $X_n$, hence establishing Theorem \ref{thm:MeanAndVariance}. We also find the exact distribution of $X_n$, proving Theorem \ref{thm:DistributionXn}. We begin by finding the mean and variance using the generating functions from Section \ref{sec:generatingfunction}.
\begin{proof}[Proof of Theorem \ref{thm:MeanAndVariance}]
Set
\[
\mu_n:=\mathbb{E}(X_n),\qquad \sigma_n^2:=\Var(X_n).
\]
For $1\le n\le \nu$, Corollary~\ref{cor:asym-bgf-initial} yields
\[
X_n\overset{L}{=}1+\operatorname{Bin}(n-1,q),
\]
hence
\[
\mu_n=1+(n-1)q=qn+p,
\qquad
\sigma_n^2=(n-1)pq.
\]

Next, using the explicit expression for $s_{\nu+1}(v)$ from
Proposition~\ref{prop:asym-bgf}, we obtain
\[
\mu_{\nu+1}=s_{\nu+1}'(1)=1+p+\nu q-q^\nu.
\]
A short simplification shows that
\[
\mu_{\nu+1}=(1-pq)(\nu+1)+2p-(\nu+1)p^2-q^\nu.
\]
For $n\ge \nu+2$, differentiate the recurrence
\[
s_n(v)=v\,s_{n-1}(v)+pq\,v(1-v)\,s_{n-2}(v)
\]
at $v=1$. Since $s_m(1)=1$ for all $m$, we get
\[
\mu_n=\mu_{n-1}+1-pq,\qquad n\ge \nu+2.
\]
Thus
\[
\mu_n=\mu_{\nu+1}+(n-\nu-1)(1-pq),\qquad n\ge \nu+1,
\]
which is exactly the stated formula for $\mathbb{E}(X_n)$.

For the variance, set
\[
f_n:=s_n''(1)=\mathbb{E}\!\bigl(X_n(X_n-1)\bigr).
\]
Differentiating the explicit formula for $s_{\nu+1}(v)$ twice gives
\[
f_{\nu+1}
=
2+4(\nu-1)q+(\nu-1)(\nu-2)q^2-2\nu q^\nu.
\]
Therefore
\[
\sigma_{\nu+1}^2
=
f_{\nu+1}+\mu_{\nu+1}-\mu_{\nu+1}^2
=
(\nu-1)pq+\bigl(1-2(\nu-1)p\bigr)q^\nu-q^{2\nu}.
\]
Now differentiate the recurrence twice and evaluate at $v=1$. This yields
\[
f_n=f_{n-1}+2\mu_{n-1}-2pq\bigl(1+\mu_{n-2}\bigr),
\qquad n\ge \nu+2.
\]
Using this identity together with
\[
\mu_n-\mu_{n-1}=1-pq,
\]
one finds first
\[
\sigma_{\nu+2}^2
=
\nu pq-p^2q^2+\bigl(1-2\nu p+2p^2\bigr)q^\nu-q^{2\nu},
\]
and then, for $n\ge \nu+3$,
\[
\begin{aligned}
\sigma_n^2-\sigma_{n-1}^2
&=
(f_n-f_{n-1})+(\mu_n-\mu_{n-1})-(\mu_n^2-\mu_{n-1}^2)\\
&=
pq(1-3pq).
\end{aligned}
\]
Hence
\[
\sigma_n^2=\sigma_{\nu+2}^2+(n-\nu-2)pq(1-3pq),\qquad n\ge \nu+2,
\]
which simplifies to the stated third line.

Finally, when $p\ge \frac12$, we have $\nu=1$. Substituting $\nu=1$ into the general
formulae gives
\[
\mathbb{E}(X_1)=1,\qquad
\mathbb{E}(X_n)=(1-p+p^2)n+3p-1-2p^2,\qquad n\ge 2,
\]
and
\[
\Var(X_1)=0,\qquad \Var(X_2)=pq,
\]
\[
\Var(X_n)=pq(1-3pq)\,n+pq(10p-8p^2-3),\qquad n\ge 3.
\]
\end{proof}
Next, we find the exact distribution of $X_n$.
\begin{proof}[Proof of Theorem \ref{thm:DistributionXn}]
For $0<p<\frac12$ and $1\le n\le \nu$, part (2) is exactly
Corollary~\ref{cor:asym-bgf-initial}.

For $0<p<\frac12$ and $n>\nu$, coefficient extraction from
\eqref{eq:asym-rec-large} gives
\[
\pi_{n,k}
=
p\,\pi_{n-1,k-1}
+\sum_{j=2}^{n-1}pq^{j-1}\pi_{n-j,k-j+1}
+q^{n-1}\mathbf{1}_{\{k=n-1\}},
\]
which proves the last part. It remains to prove the first part. 

Assume that $p\ge \frac12$, and set $D_n:=n-X_n$. By Corollary~\ref{cor:asym-bgf-upper},
\[
R(z,v):=\sum_{n\ge 1}\mathbb{E}\!\left(v^{D_n}\right)z^n
=
S(vz,v^{-1})
=
z\,\frac{1-q(1-v)z}{1-z+pq(1-v)z^2}.
\]
Thus, $D_n$ is encoded by the usual bivariate generating function for the number of
zero-runs in a Bernoulli$(p)$ word of length $n-1$, with
\[
\mathbb{P}(1)=p,\qquad \mathbb{P}(0)=q.
\]
We now count such words directly. Fix $j\ge 1$, and suppose the word has exactly
$m$ zeros and exactly $j$ zero-runs. Then the $m$ zeros must be split into $j$
positive blocks, which can be done in
\[
\binom{m-1}{j-1}
\]
ways. The remaining $n-1-m$ ones create
\[
(n-1-m)+1=n-m
\]
gaps, and one chooses $j$ of these gaps in which to place the zero-blocks. This gives
\[
\binom{n-m}{j}
\]
possibilities. Each such word has probability $q^m p^{n-1-m}$, so
\[
\mathbb{P}(D_n=j)
=
\sum_{m=j}^{n-j}
\binom{m-1}{j-1}\binom{n-m}{j}\,q^m p^{n-1-m}.
\]
Since $D_n=n-X_n$, we set $j=n-k$ and obtain, for $\lceil n/2\rceil\le k\le n-1$,
\[
\mathbb{P}(X_n=k)
=
\sum_{m=n-k}^{k}
\binom{m-1}{n-k-1}\binom{n-m}{n-k}\,q^m p^{n-1-m}.
\]
Finally, the case $D_n=0$ corresponds to the all-ones word, hence
\[
\mathbb{P}(X_n=n)=\mathbb{P}(D_n=0)=p^{n-1}.
\]
\end{proof}

\begin{proof}[Proof of Corollary~\ref{cor:zerorunsdistribution}]
Fix the shelf word $(\varepsilon_1,\ldots,\varepsilon_{n-1})$. This word determines the shuffled deck: the indices with $\varepsilon_j=1$ appear before $n$ in increasing order, and the indices with $\varepsilon_j=0$ appear after $n$ in decreasing order.

If $p\ge 1/2$, the strategy guesses the immediate successor of the previously revealed card until $n$ or $n-1$ is revealed. Thus, an incorrect guess occurs exactly when the next revealed card jumps over a non-empty maximal block of zeros in the shelf word. Each zero-run causes exactly one such jump, and no other incorrect guesses occur. Hence
\[
n-X_n=R_0(\varepsilon_1,\ldots,\varepsilon_{n-1}).
\]

If $0<p<1/2$ and $n\le\nu$, the strategy guesses $n$ until $n$ or $n-1$ is revealed. Therefore, each card appearing before $n$ gives one incorrect guess, and these cards are precisely those $j\le n-1$ for which $\varepsilon_j=1$. This gives
\[
n-X_n=\sum_{j=1}^{n-1}\varepsilon_j.
\]

Finally, assume that $0<p<1/2$ and $n>\nu$, and set $a=n-\nu$. While the remaining deck has size greater than $\nu$, the strategy guesses the immediate successor of the previously revealed card. Hence, this first part of the play contributes exactly the number of zero-runs in $(\varepsilon_1,\ldots,\varepsilon_a)$. The card at which the remaining deck first has size at most $\nu$ is indexed by
\[
\tau=\min\{j\in\{a,a+1,\ldots,n-1\}:\varepsilon_j=1\},
\]
with the convention $\tau=n$ if the set is empty. From that point on, the strategy guesses $n$ until $n$ appears, so the remaining incorrect guesses are precisely the later cards before $n$, namely the indices $j>\tau$ with $\varepsilon_j=1$. This proves
\[
n-X_n=R_0(\varepsilon_1,\ldots,\varepsilon_a)+\sum_{j=\tau+1}^{n-1}\varepsilon_j.
\]
\end{proof}
\subsection{Log-Concavity}
In this subsection, we prove Theorem \ref{thm:logConcave}. It is convenient to work with
\[
D_n:=n-X_n, \qquad d_n(v):=\mathbb{E}\!\left[v^{D_n}\right]=v^ns_n(v^{-1}).
\]
It is easy to see that the log-concavity of $D_n$ is equivalent to the log-concavity of $X_n$. In this section, we prove that $D_n$ is log-concave. Before proving the result, we state two important lemmas for log-concave sequences. The first lemma is well-known, see e.g. \cite{StanleyLogConc}.
\begin{lem}
\label{lem:binomlogconcave}
    The sequence of binomial coefficients is log-concave, that is,
    \[\binom{n}{k}^2\geq\binom{n}{k-1}\binom{n}{k+1}.\]
\end{lem}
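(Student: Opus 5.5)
We prove the inequality $\binom{n}{k}^2\ge\binom{n}{k-1}\binom{n}{k+1}$ directly from the factorial formula, working with the ratio of consecutive terms. First we dispose of the degenerate indices. If $k=0$ or $k=n$, one of $\binom{n}{k-1}$, $\binom{n}{k+1}$ vanishes by the convention $\binom{a}{b}=0$ for $b>a$ or $b<0$. The right-hand side is then $0$, so the inequality is trivial. The same holds for $k<0$ or $k>n$, where both sides involve vanishing coefficients.

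For $1\le k\le n-1$ all three coefficients are positive, and we compute the ratio
\[
\frac{\binom{n}{k+1}}{\binom{n}{k}}=\frac{n-k}{k+1}.
\]
The claimed inequality is equivalent to the ratio $r_k:=\binom{n}{k+1}/\binom{n}{k}$ being non-increasing in $k$, that is, $r_k\le r_{k-1}$. Concretely, this reads
\[
\frac{n-k}{k+1}\le \frac{n-k+1}{k}.
\]
Cross-multiplying by the positive quantity $k(k+1)$ reduces it to
\[
k(n-k)\le (k+1)(n-k+1).
\]
Expanding, the right side equals $k(n-k)+k+(n-k)+1$. It therefore exceeds the left side by $n+1>0$, so the inequality holds, in fact strictly.

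Equivalently, one can write
\[
\frac{\binom{n}{k}^2}{\binom{n}{k-1}\binom{n}{k+1}}=\frac{(k+1)(n-k+1)}{k(n-k)}.
\]
Here the factorials cancel via $\binom{n}{k-1}=\binom{n}{k}\frac{k}{n-k+1}$ and $\binom{n}{k+1}=\binom{n}{k}\frac{n-k}{k+1}$. The right-hand side is visibly at least $1$.

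There is no real obstacle. The only care needed is the boundary bookkeeping with the zero conventions, so that the division step is applied only where all terms are positive. Alternatively, one could cite Newton's inequality, since $(1+x)^n$ has only real roots, but the direct computation above is self-contained and is what we would write.
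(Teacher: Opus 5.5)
Your proof is correct. For $1\le k\le n-1$, the exact identity $\binom{n}{k}^2\big/\bigl(\binom{n}{k-1}\binom{n}{k+1}\bigr)=\frac{(k+1)(n-k+1)}{k(n-k)}>1$ settles the claim, even with strict inequality. The boundary indices are trivial once $\binom{n}{b}=0$ for $b<0$ or $b>n$.

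The paper gives no proof at all; it calls the lemma well known and cites Stanley's survey. So your computation supplies a self-contained argument where the paper relies on a reference.

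Two small remarks:
\begin{enumerate}
\item Your explicit convention for a negative lower index is the right one to state. The paper only declares $\binom{a}{b}=0$ for $a<b$, yet it applies the lemma at $k=1$ in Lemma~\ref{lem:logConcavitysmallpsmalln}, where $\binom{n-1}{-1}$ appears.
\item The Newton-inequality route you mention gives nothing extra here. In its sharp form $a_k^2\ge a_{k-1}a_{k+1}\bigl(1+\frac1k\bigr)\bigl(1+\frac1{n-k}\bigr)$, it holds with equality for $(1+x)^n$, and that equality is precisely your identity. Its value is that it generalizes to the coefficients of arbitrary real-rooted polynomials.
\end{enumerate}
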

Using this, we immediately get that $D_n=n-X_n$ is log concave when $p\in (0, 1/2)$ and $n\leq \nu$. For completeness, we record the short proof below. 
\begin{lem}
\label{lem:logConcavitysmallpsmalln}
   Fix $0<p<1/2$ and $1\leq n\leq\nu$. Then, $X_n$ is log-concave.
\end{lem}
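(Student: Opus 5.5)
By Corollary~\ref{cor:asym-bgf-initial}, for $0<p<\frac12$ and $1\le n\le\nu$ we have $X_n\law 1+\operatorname{Bin}(n-1,q)$, so the mass function is explicit:
\[
\pi_{n,k}=\binom{n-1}{k-1}q^{k-1}p^{n-k},\qquad 1\le k\le n,
\]
and $\pi_{n,k}=0$ otherwise. This is also part (2) of Theorem~\ref{thm:DistributionXn}. The proof is then a direct verification of $\pi_{n,k}^2\ge\pi_{n,k-1}\pi_{n,k+1}$ for every $k\ge 1$.

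\emph{Interior range} $2\le k\le n-1$. All three terms are positive. The powers of $p$ and $q$ cancel exactly, because $q^{2(k-1)}p^{2(n-k)}=q^{k-2}p^{n-k+1}\cdot q^{k}p^{n-k-1}$. The inequality therefore reduces to
\[
\binom{n-1}{k-1}^2\ge\binom{n-1}{k-2}\binom{n-1}{k},
\]
which is Lemma~\ref{lem:binomlogconcave} applied with $n-1$ in place of $n$ and $k-1$ in place of $k$.

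\emph{Boundary cases} $k=1$ and $k\ge n$. At $k=1$ the term $\pi_{n,0}$ vanishes, so the right-hand side is $0$ and the inequality holds. For $k\ge n$ the factor $\pi_{n,k+1}$ vanishes, so again the right-hand side is $0$.

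\emph{Degenerate case} $n=1$. Here $X_1=1$ almost surely, which is trivially log-concave.

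Since $D_n=n-X_n$ only reverses the support, the same argument shows that $D_n\law\operatorname{Bin}(n-1,p)$ is log-concave, which is the form used later in this subsection. The proof involves no real obstacle; the only point needing care is the zero boundary terms at the ends of the support.
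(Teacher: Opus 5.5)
Your proposal is correct and follows the paper's own proof: both read off the pmf $\pi_{n,k}=\binom{n-1}{k-1}q^{k-1}p^{n-k}$ from Theorem~\ref{thm:DistributionXn}(2), note that the powers of $p$ and $q$ cancel, and reduce the inequality to Lemma~\ref{lem:binomlogconcave}. Your explicit treatment of the boundary indices $k=1$ and $k\ge n$ is a small addition that the paper leaves implicit.
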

\begin{proof}
From Theorem \ref{thm:DistributionXn}, we have, denoting $\pi_{n,k}=\P(X_n=k)$,
    \begin{equation}
    \label{eq:p12ineq}
        \begin{aligned}
            (\pi_{n,k})^2 &=\binom{n-1}{k-1}^2 q^{2k-2}p^{2n-2k}\\
            &\geq\binom{n-1}{k-2}\binom{n-1}{k}q^{2k-2}p^{2n-2k}\\
            &=\binom{n-1}{k-2}q^{k-2}p^{n-(k-1)}\binom{n-1}{k}q^{k}p^{n-(k+1)}\\
            &=\pi_{n,{k+1}}\pi_{n,k-1}
        \end{aligned}
    \end{equation}
    where the inequality~\eqref{eq:p12ineq} follows from Lemma \ref{lem:binomlogconcave}. This shows log-concavity in the $0<p<1/2$ and $n\leq\nu$ case.
\end{proof}

We now prove the log-concavity of $D_n$ for $n>\nu$ and $p\in (0, 1)$. The proof uses induction combined with some results from~\cite{GMTW}. Before we begin the proof, we need some notations and definitions.  Fix $p\in (0, 1)$ and $r=\nu-1$ and $B_r\sim \mathrm{Bin}(r, p)$. Define 
\[
V_1(v) = \E[v^{B_r}] = (q+pv)^r, \qquad V_0(v) = \E[v^{(B_r-1)_{+}}]\;. 
\]
Define the polynomials $F_{m}^{(1)}$ and $F_m^{(0)}$ recursively as follows: 
\[
F_0^{(1)}(v) = V_{1}(v), \qquad F_0^{(0)}(v) = V_{0}(v), 
\]
and for $m\geq 0$, 
\begin{align*}
    F_{m+1}^{(1)}(v) = pF_m^{(1)}(v)+qvF_m^{(0)}(v), \quad F_{m+1}^{(0)}(v) = pF_m^{(1)}(v)+qF_m^{(0)}(v).
\end{align*}
With this setup, we rewrite the generating function of $D_n$ for $n>\nu$ in terms of $F_{n-\nu}^{(1)}$. 
\begin{lem}
Fix $p\in (0, 1)$ and $n>\nu$. Then,
    \[d_n(v):=\E\left[v^{D_n}\right] = F_{n-\nu}^{(1)}(v)\;. \]
\end{lem}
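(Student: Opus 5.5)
The plan is to prove the identity combinatorially from Corollary~\ref{cor:zerorunsdistribution}, reading the shelf word from left to right and treating the recursion for $F_m^{(1)},F_m^{(0)}$ as a backward (first-step) decomposition.

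\textbf{Setting up the representation.} Put $a=n-\nu\ge 1$ and $r=\nu-1$. First observe that Corollary~\ref{cor:zerorunsdistribution} gives, for every $p\in(0,1)$ and $n>\nu$,
\[
D_n=R_0(\varepsilon_1,\ldots,\varepsilon_a)+\sum_{j=\tau+1}^{n-1}\varepsilon_j .
\]
For $p<1/2$ this is part (3). For $p\ge 1/2$ we have $\nu=1$, so $a=n-1$ and $r=0$. Then $\tau\in\{n-1,n\}$, the tail sum is empty, and we recover part (1).

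\textbf{Conditioning on the last letter of the prefix.} Next I would condition on $\varepsilon_a$ and identify the law of the tail term $T:=\sum_{j>\tau}\varepsilon_j$. This term depends only on $\varepsilon_a,\ldots,\varepsilon_{n-1}$.
\begin{itemize}
\item If $\varepsilon_a=1$, then $\tau=a$ and $T=\sum_{j=a+1}^{n-1}\varepsilon_j\sim\mathrm{Bin}(r,p)$. Its generating function is $V_1$.
\item If $\varepsilon_a=0$, then $\tau$ is the first index in $\{a+1,\ldots,n-1\}$ carrying a $1$. That first $1$ is not counted, so $T=(B_r-1)_+$ with $B_r=\sum_{j=a+1}^{n-1}\varepsilon_j$. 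Its generating function is $V_0$.
\end{itemize}
In both cases $T$ is independent of $(\varepsilon_1,\ldots,\varepsilon_{a-1})$ given $\varepsilon_a$.

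\textbf{The Markov-chain identification.} Call the process ``in state $1$'' before reading $\varepsilon_1$ and after reading a $1$, and ``in state $0$'' after reading a $0$. A new zero-run, costing a factor $v$, starts exactly when a $0$ is read in state $1$. For $0\le m\le a$, let $G_m^{(s)}(v)$ be the conditional generating function of (zero-runs created among the last $m$ prefix letters) $+\,T$, given that the state before those $m$ letters is $s$. For $m=0$ the state is determined by $\varepsilon_a$, so the previous step gives $G_0^{(1)}=V_1$ and $G_0^{(0)}=V_0$. Conditioning on the next letter yields:
\begin{itemize}
\item with probability $p$ the letter is $1$, there is no cost, and the new state is $1$;
\item with probability $q$ the letter is $0$, there is a cost $v$ if $s=1$ (none if $s=0$), and the new state is $0$.
\end{itemize}
Hence
\[
G_{m+1}^{(1)}=pG_m^{(1)}+qvG_m^{(0)},\qquad G_{m+1}^{(0)}=pG_m^{(1)}+qG_m^{(0)}.
\]
These are exactly the recursion and initial data defining $F_m^{(s)}$, so $G_m^{(s)}=F_m^{(s)}$. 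Since the initial state is $1$ and there are $a=n-\nu$ prefix letters, $d_n=G^{(1)}_{n-\nu}=F^{(1)}_{n-\nu}$.

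\textbf{Main obstacle.} The delicate point is the boundary at index $a$. The letter $\varepsilon_a$ plays two roles: it contributes to $R_0$ and it decides the tail law. This has to match the convention that $F_0$ is indexed by the state \emph{after} reading $\varepsilon_a$. I also need to check the degenerate cases $r=0$ (where $V_0=V_1=1$) and $a=1$.

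\textbf{Fallback.} If the combinatorial bookkeeping proved slippery, I would verify the identity algebraically instead. Eliminating $F^{(0)}$ shows that $F_m^{(1)}$ satisfies $F_{m+2}=F_{m+1}+pq(v-1)F_m$. This is the reversal $v^n s_n(v^{-1})$ of the recurrence~\eqref{eq:asym-rec-second-order}. It would then remain to match the two initial values with $v^{\nu}s_\nu(v^{-1})$ and $v^{\nu+1}s_{\nu+1}(v^{-1})$ from Proposition~\ref{prop:asym-bgf}.
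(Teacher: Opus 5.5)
Your argument is correct, but your main route differs from the paper's; your fallback is essentially the paper's proof.

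\textbf{The paper's route.} The proof is purely algebraic. It checks $d_\nu=F_0^{(1)}$ and $d_{\nu+1}=F_1^{(1)}$ directly from the explicit polynomials $s_\nu,s_{\nu+1}$ of Proposition~\ref{prop:asym-bgf}. It then reverses \eqref{eq:asym-rec-second-order} to get $d_n=d_{n-1}+pq(v-1)d_{n-2}$. Finally, eliminating $F^{(0)}$ shows that $F^{(1)}_m$ satisfies the same recurrence, and induction closes the argument.

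\textbf{Your route.} You start from the shelf-word description in Corollary~\ref{cor:zerorunsdistribution}. You then run a first-step decomposition over the prefix as a two-state chain, with the tail term $T$ supplying the initial data $V_1,V_0$ once you condition on $\varepsilon_a$. Your treatment of the boundary letter is right: the state after $\varepsilon_a$ fixes the law of $T$, giving $G_1^{(1)}=pV_1+qvV_0$ and $G_1^{(0)}=pV_1+qV_0$. The case $p\ge 1/2$ is correctly absorbed, since $r=0$ forces $V_0=V_1=1$ and $T\equiv 0$. It is cleanest to define $G_m^{(s)}$ as the generating function of the cost of $(\varepsilon_{a-m+1},\dots,\varepsilon_{n-1})$ started from state $s$, rather than as a conditional expectation, so that $G_a^{(0)}$ also makes sense; this is cosmetic.

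\textbf{What each route buys.} Yours explains where $F^{(1)},F^{(0)}$ come from, and in particular why $V_0=\E[v^{(B_r-1)_+}]$ appears: they are exactly the state-indexed generating functions of the strategy's error count. It also needs no separate polynomial verification of base cases. The cost is that it rests on the combinatorial bookkeeping behind Corollary~\ref{cor:zerorunsdistribution}(3), namely that the strategy switches to guessing $n$ exactly once the last revealed upper card is at least $a$. The paper's proof uses only the generating-function recurrence, so it is a mechanical check independent of that description of the strategy.
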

\begin{proof}
    We prove this by induction on $n$. There are two base cases to check. First, assume that $n=\nu$. Then,
    \begin{equation}
    \begin{aligned}
        d_{\nu}(v)&=v^{\nu}s_{\nu}(v^{-1})\\
        &=v^{\nu}(v^{-1}(p+qv^{-1})^{\nu-1})\\
        &=(v(p+qv^{-1}))^{\nu-1}\\
        &=(q+pv)^{\nu-1}\\
        &=V_1(v)=F_0^{(1)}(v)
    \end{aligned}
    \end{equation}
    where the second equality follows from Proposition $\ref{prop:asym-pgf}(2)$. 

    Next, assume that $n=\nu+1$. Then, using Proposition $\ref{prop:asym-pgf}$ again, we have
    \begin{equation}
        \begin{aligned}
            d_{\nu+1}(v)&=v^{\nu+1}s_{\nu+1}(v^{-1})\\
            &=v^{\nu+1}(v^{-2}(p+qv^{-1})^{\nu-1}+q^\nu v^{-\nu}(1-v^{-1}))\\
            &=(q+pv)^{\nu-1}+q^{\nu}(v-1).
        \end{aligned}
    \end{equation}
    On the other hand,
    \begin{equation}
        \begin{aligned}
            F_1^{(1)}(v)&=pF^{(1)}_0(v)+qvF^{(0)}_1(v)\\
            &=p(q+pv)^{\nu-1}+qv\E[v^{(B_r-1)_{+}}]\\
            &=p(q+pv)^{\nu-1}+qv(q^{\nu-1}+v^{-1}(q+pv)^{\nu-1}-q^{\nu-1}v^{-1})\\
            &=p(q+pv)^{\nu-1}+q^{\nu}v+q(q+pv)^{\nu-1}-q^{\nu}\\
            &=(q+pv)^{\nu-1}+q^{\nu}(v-1)\\
            &=d_{\nu+1}(v).
        \end{aligned}
    \end{equation}
    Now, we address the induction step. The recurrence for $s_n(v)$ in Equation $\eqref{eq:asym-rec-second-order}$ implies that
    \begin{equation}
    \label{eq:recurrfordn}
    \begin{aligned}
        d_n(v)&=v^ns_n(v^{-1})\\
        &=v^n(v^{-1}\,s_{n-1}(v^{-1})+pq\,v^{-1}(1-v^{-1})\,s_{n-2}(v^{-1}))\\
        &=v^{n-1}s_{n-1}(v^{-1})+pqv^{n-1}s_{n-2}(v^{-1})-pqv^{n-2}s_{n-2}(v^{-1})\\
        &=d_{n-1}(v)+pq(v-1)d_{n-2}(v).
    \end{aligned}
    \end{equation}
Let $n\geq\nu+2$, and assume for sake of induction that $d_k(v)=F_{k-\nu}^{(1)}(v)$ for all $k\leq n$. Using the recurrence from the definitions of $F^{(1)}$ and $F^{(0)}$ and the shorthand $F:=F(v)$, we compute
\begin{equation}
\begin{aligned}
    F^{(1)}_{n-\nu+1}&=pF^{(1)}_{n-\nu}+qvF^{(0)}_{n-\nu}\\
    &=pF^{(1)}_{n-\nu}+qv(pF^{(1)}_{n-\nu-1}+qF^{(0)}_{n-\nu-1})\\
    &=pF^{(1)}_{n-\nu}+qpvF^{(1)}_{n-\nu-1}+q(F^{(1)}_{n-\nu}-pF^{(1)}_{n-\nu-1})\\
    &=pd_n(v)+qpvd_{n-1}(v)+qd_n(v)-pqd_{n-1}(v)\\
    &=d_n(v)+pq(v-1)d_{n-1}(v)\\
    &=d_{n+1}(v),
    \end{aligned}
\end{equation}
where the fourth equality used the induction hypothesis and the last equality used the recurrence for $d_n$ in Equation $\eqref{eq:recurrfordn}$. This completes the proof.
\end{proof}
Let $A_m$ and $B_m$ denote the coefficient sequences of $F_{m}^{(1)}$ and $F_m^{(0)}$, respectively. Theorem~\ref{thm:logConcave} (in $p\in (0, 1)$ and $n>\nu$) follows if we show that $A_m$ is log-concave for $m\geq 1$. We establish this via an inductive argument combined with some results in~\cite{GMTW}. We start with some preparation. 

In the following discussion, we extend all finite sequences to $\mathbb{Z}$ by setting them to zero outside their support. Let $A=(a_n)_{n\in \mathbb{Z}}$ and $B=(b_n)_{n\in \mathbb{Z}}$ be log-concave sequences. We say that $A$ and $B$ are \emph{weakly synchronized}, denoted $A\sim_w B$, if 
\[
a_{k-1}b_{k+1}+a_{k+1}b_{k-1}\leq 2a_kb_k, \quad \text{for all }k\in \mathbb{Z}\;.
\]
Let $S$ denote the right-shift operator on sequences $(SA)_k=a_{k-1}$.
It is easy to verify that if $A, B$ are log-concave and have no internal zeros, then
\begin{equation}
\label{eqn:WeakSynchronizationFacts}
  A\sim_w SA, \quad \text{and}\quad SA\sim_w SB\;.
\end{equation}
For sequences $A=(a_n)_{n\in \mathbb{Z}}$ and $B=(b_n)_{n\in \mathbb{Z}}$ and constants $\alpha$ and $\beta$, we define the sequence $\alpha A+\beta B$ such that its $n$th entry is equal to $\alpha a_n+\beta b_n$ for each $n\in\mathbb{Z}$. We need the following result.
\begin{theorem}[{\cite[Theorem 2.7]{GMTW}}]
\label{thm:GMTW}
Let $A_1, \ldots, A_n$ be pairwise weakly synchronized sequences and let $u_1, \ldots, u_n, v_1, \ldots, v_n\geq 0$. Then, 
\[
\sum_{i=1}^{n}u_iA_i\sim_w \sum_{i=1}^{n}v_iA_i\;.
\]
\end{theorem}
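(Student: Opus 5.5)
The plan is to turn weak synchronization into a statement about a family of symmetric bilinear forms, and then use bilinearity. For each $k\in\mathbb{Z}$ and sequences $A=(a_n)$, $B=(b_n)$, set
\[
Q_k(A,B):=2a_kb_k-a_{k-1}b_{k+1}-a_{k+1}b_{k-1}.
\]
This expression is bilinear in $(A,B)$. It is also symmetric, because swapping $A$ and $B$ just exchanges the two subtracted terms. By definition, $A\sim_w B$ means that $A$ and $B$ are log-concave and $Q_k(A,B)\ge 0$ for all $k$. Moreover, $Q_k(A,A)=2(a_k^2-a_{k-1}a_{k+1})$. So a sequence $A$ is log-concave (in the sense used in the paper) exactly when $Q_k(A,A)\ge0$ for all $k$. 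In particular, every log-concave sequence is weakly synchronized with itself.

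Put $C:=\sum_i u_iA_i$ and $D:=\sum_i v_iA_i$. The first step is to expand by bilinearity:
\[
Q_k(C,D)=\sum_{i,j=1}^n u_iv_j\,Q_k(A_i,A_j).
\]
Each term $Q_k(A_i,A_j)$ is nonnegative. For $i\neq j$ this is the pairwise weak synchronization hypothesis, using symmetry so that the order of the pair does not matter. For $i=j$ it is the log-concavity of $A_i$, which is part of the definition of weak synchronization. Since all $u_i,v_j\ge0$, we get $Q_k(C,D)\ge0$ for every $k$.

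The second step is to check that $C$ and $D$ are themselves log-concave, which the definition of $\sim_w$ requires. This follows from the same expansion with $v=u$: $Q_k(C,C)=\sum_{i,j}u_iu_jQ_k(A_i,A_j)\ge0$, which is exactly $c_k^2\ge c_{k-1}c_{k+1}$. The same argument applies to $D$. Nonnegativity of the entries of $C$ and $D$ is clear, since they are nonnegative combinations of nonnegative sequences.

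I do not expect a real obstacle. The only point needing care is the requirement in the definition that the sequences be log-concave. If one insisted that log-concave also means ``no internal zeros'', this would not follow from the bilinear argument and would need an extra hypothesis. Under the paper's definition, which is the inequality alone, the argument above is complete.
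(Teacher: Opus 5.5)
Your proof is correct under the paper's definitions. The paper gives no proof of this statement: it quotes Theorem 2.7 of Gross, Mansour, Tucker and Wang. So your argument is a self-contained replacement for that citation rather than an alternative to an argument in the text.

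The key step is to read both log-concavity and weak synchronization as nonnegativity of one symmetric bilinear form, $Q_k(A,B)=2a_kb_k-a_{k-1}b_{k+1}-a_{k+1}b_{k-1}$. Log-concavity is the diagonal value and weak synchronization the off-diagonal value. The result then follows in one line from bilinearity and $u_iv_j\ge 0$. This also shows why the hypothesis has to be pairwise: log-concavity of each $A_i$ controls only the diagonal terms $Q_k(A_i,A_i)$, while the cross terms $Q_k(A_i,A_j)$ need their own sign condition. The same expansion with $v=u$ gives log-concavity of every nonnegative combination $\sum_i u_iA_i$. That is what the paper ultimately needs for the sequences $A_m$.

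Two small points.

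\begin{enumerate}
\item The diagonal terms require each $A_i$ to be log-concave. For $n\ge 2$ this is contained in pairwise weak synchronization, as you say. For $n=1$ the pairwise hypothesis is vacuous, so log-concavity of $A_1$ must be taken as an implicit assumption.
\item Your caveat about internal zeros is more than a limitation of the method; the statement itself would fail. Take $(1,0,0,0)$ and $(0,0,0,1)$. Each has no internal zeros, and they are weakly synchronized, since every product $a_{k\mp 1}b_{k\pm 1}$ vanishes. Yet their sum $(1,0,0,1)$ has internal zeros. So the statement holds only for the inequality-only notion of log-concavity, which is the one the paper uses. The paper needs the no-internal-zeros assumption only for the separate shift facts such as $A\sim_w SA$.
\end{enumerate}
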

As a consequence, we get the following induction hypothesis. 
\begin{lem}[Induction Hypothesis]
\label{lem:InductionHypothesis}
For every $m\geq 0$, let $A_m$ and $B_m$ denote the coefficient sequences of $F_{m}^{(1)}$ and $F_m^{(0)}$, respectively. Suppose that 
\begin{equation}
\label{eqn:Induction}
    A_m\sim_wB_m, \quad \text{and} \quad A_m\sim_w SB_m
\end{equation}
for some $m\geq 0$. Then, 
\[
A_{m+1}\sim_w B_{m+1}, \quad \text{and}\quad A_{m+1}\sim_w SB_{m+1}.
\]
\end{lem}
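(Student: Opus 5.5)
The plan is to write the recursion at the level of coefficient sequences and apply Theorem~\ref{thm:GMTW} twice, once for each claimed relation. From the definitions of $F^{(1)}_{m+1}$ and $F^{(0)}_{m+1}$, multiplication by $v$ is the shift $S$, so
\[
A_{m+1}=pA_m+q\,SB_m,\qquad B_{m+1}=pA_m+qB_m,\qquad SB_{m+1}=p\,SA_m+q\,SB_m .
\]
Each target relation compares two nonnegative combinations of a common family of three sequences. This is exactly the form to which Theorem~\ref{thm:GMTW} applies, once the three sequences are shown to be pairwise weakly synchronized.

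\emph{First relation, $A_{m+1}\sim_w B_{m+1}$.} Take the family $\{A_m,B_m,SB_m\}$. The pair $A_m\sim_w B_m$ and the pair $A_m\sim_w SB_m$ are the induction hypothesis \eqref{eqn:Induction}. The pair $B_m\sim_w SB_m$ follows from the first fact in \eqref{eqn:WeakSynchronizationFacts}, provided $B_m$ is log-concave with no internal zeros. Theorem~\ref{thm:GMTW} with $(u_1,u_2,u_3)=(p,0,q)$ and $(v_1,v_2,v_3)=(p,q,0)$ then gives $pA_m+qSB_m\sim_w pA_m+qB_m$, which is the claim.

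\emph{Second relation, $A_{m+1}\sim_w SB_{m+1}$.} Take the family $\{A_m,SA_m,SB_m\}$. The pair $A_m\sim_w SA_m$ comes from \eqref{eqn:WeakSynchronizationFacts}. The pair $A_m\sim_w SB_m$ is hypothesis. The pair $SA_m\sim_w SB_m$ holds because the defining inequality of $\sim_w$ is invariant under shifting both sequences by the same amount, so it follows directly from $A_m\sim_w B_m$. Theorem~\ref{thm:GMTW} with coefficients $(p,0,q)$ and $(0,p,q)$ gives $pA_m+qSB_m\sim_w pSA_m+qSB_m$, as required.

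The main obstacle is the bookkeeping of the side hypotheses. Weak synchronization is defined only between log-concave sequences, and the facts \eqref{eqn:WeakSynchronizationFacts} need the absence of internal zeros. So I would carry along the inductive statement that $A_m$ and $B_m$ are log-concave, nonnegative, and have interval supports containing $0$.

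\begin{itemize}
\item \emph{Log-concavity.} This is essentially free. Applying Theorem~\ref{thm:GMTW} with $u=v$ shows that any nonnegative combination $X$ of a pairwise weakly synchronized family satisfies $X\sim_w X$, that is, $2x_{k-1}x_{k+1}\le 2x_k^2$. Hence $A_{m+1}$ and $B_{m+1}$ are log-concave.
\item \emph{No internal zeros, base case.} $F^{(1)}_0=(q+pv)^r$ has positive coefficients in degrees $0,\dots,r$. $F^{(0)}_0$ has positive coefficients in degrees $0,\dots,\max(r-1,0)$.
\item \emph{No internal zeros, inductive step.} Suppose $\operatorname{supp}A_m$ and $\operatorname{supp}B_m$ are intervals containing $0$. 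Then $\operatorname{supp}SB_m$ is an interval starting at $1$, so $\operatorname{supp}A_m\cup\operatorname{supp}SB_m$ is again an interval containing $0$. Since all coefficients are nonnegative, there is no cancellation, so $A_{m+1}$ has interval support containing $0$, and $B_{m+1}$ likewise.
\end{itemize}

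With these hypotheses maintained, the two applications of Theorem~\ref{thm:GMTW} above close the induction.
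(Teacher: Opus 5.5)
Your proof is correct and is the paper's argument: the same two families $\{A_m,B_m,SB_m\}$ and $\{A_m,SA_m,SB_m\}$, with the same coefficient choices in Theorem~\ref{thm:GMTW}. Your side bookkeeping adds rigor the paper omits. You read $SA_m\sim_w SB_m$ as shift-invariance of $A_m\sim_w B_m$, which is the correct reading of the loosely stated second fact in \eqref{eqn:WeakSynchronizationFacts}. You also prove the interval-support (no internal zeros) invariant that the paper uses implicitly when invoking $A_m\sim_w SA_m$ and $B_m\sim_w SB_m$.
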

\begin{proof}
    From the assumptions~\eqref{eqn:Induction} and the fact that $B_m\sim_w SB_m$, we have that $A_m, B_m$ and $SB_m$ are pairwise weakly synchronized. It follows from Theorem~\ref{thm:GMTW} that 
    \[
    A_{m+1}=pA_m+0\cdot B_m+ qSB_m\sim_w pA_m+qB_m+0\cdot SB_m=B_{m+1}.
    \]
    Similarly, note that 
    \[
    A_{m+1} = pA_m+qSB_m, \qquad SB_{m+1} = pSA_m+qSB_m\;. 
    \]
    Therefore, $A_{m+1}\sim SB_{m+1}$ follows from Theorem~\ref{thm:GMTW} if $A_m, SA_m, BS_m$ are pairwise weakly synchronized. To see this, we note that $A_m\sim BS_m$ follows from the assumptions, while $A_m\sim_w SA_m$, $SA_m\sim_wSB_m$ follows from~\eqref{eqn:WeakSynchronizationFacts}, which completes the proof.
\end{proof}
We now have all the ingredients to prove Theorem~\ref{thm:logConcave}.
\begin{proof}[Proof of Theorem~\ref{thm:logConcave}]
For $p\in (0, 1)$ and $n\leq \nu$, the proof follows from Lemma~\ref{lem:logConcavitysmallpsmalln}. Now assume $p\in (0, 1)$ and $n>\nu$. By Lemma~\ref{lem:InductionHypothesis}, it suffices to prove~\eqref{eqn:Induction} for $m=0$ or $m=1$. In fact, when $r=\nu-1\neq 2$, we show that~\eqref{eqn:Induction} holds for $m=0$. When $r=\nu-1=2$, we will prove~\eqref{eqn:Induction} holds at $m=1$. 

\begin{enumerate}
    \item When $p\geq 1/2$, we note that $r=0$ giving $V_1(v)=V_0(v)=1$. In other words, $A_0=(1)$ and $B_0(1)$ and~\eqref{eqn:Induction} is trivial to verify.
    \item We now assume that $p\in (0, 1/2)$ equivalently $r\geq 1$. When $r=1$, we compute that $V_1(v)=(q+pv)$ and $v_0(v)=1$. In particular, $A_0=(q, p)$ and $B_0=(1)$ and~\eqref{eqn:Induction} is easily verified.
    \item We now consider the case $r=\nu-1=2$. In this case, we will verify~\eqref{eqn:Induction} at $m=1$. We begin by noting that $V_1(w)=(q+pv)^2$ and $V_0(v)=(1-p^2)+p^2v$. Therefore, 
   \begin{align*}
       F_1^{(1)}(v) &= pq^2+ q(1+p^2)v+ p^2v^2, \\
       F_1^{(0)}(v) &= q^2(1+2p)+3p^2qv+p^3v^2\;.
   \end{align*}
  The condition $A_1\sim_w B_1$ is equivalent to checking that 
   \[
   p^2q^2(1+2p)+p^4q^2 \leq 6p^2q^2(1+p^2),
   \]
   which simplifies to $(1+p)^2\leq 6(1+p^2)$. Since $p<1/2$, the last inequality is easily seen to be true. On the other hand, $A_1\sim_w SB_1$ is equivalent to checking the following two inequalities 
   \begin{align}
       (pq^2)(3p^2q) &\leq 2(q(1+p^2)\cdot q^2(1+2p)), \label{eqn:Firstcheck}\\
       q(1+p^2)\cdot p^3 &\leq 2 (p^3)\cdot (3p^2q) \label{eqn:SecondChcek}\;.
   \end{align}
   Note that~\eqref{eqn:Firstcheck} is equivalent to $3p^3\leq 2(1+p^2)(1+2p)$ which is immediate since $2(1+p^2)(1+2p)-3p^3 = 2+4p+2p^2+p^3>0$. On the other hand,~\eqref{eqn:SecondChcek} is equivalent to $1+p^2\leq 6p$, which is easy to verify since $p<1/2$.
   \item We now fix $r\geq 3$ and we will verify~\eqref{eqn:Induction} at $m=0$. Set $b_k= \binom{r}{k}p^kq^{r-k}$ for $1\leq k\leq r$ and zero otherwise. Note that
   \[
   A_0(k)= b_k,
   \]
   and 
   \[
   B_0(k) = \begin{cases}
   0, & k<0,\\
        b_0+b_1, & k=0, \\
       b_{k+1}, & 1\leq k\leq r-1.
   \end{cases}
   \]
   To verify $A_0\sim_w B_0$, we need to check that for all $k\in \mathbb{Z}$ we have
   \begin{equation}
   \label{eqn:FirstVerification}
       A_0(k-1)B_0(k+1)+A_0(k+1)B_0(k-1)\leq 2A_0(k)B_0(k).
   \end{equation}
   Since the left-hand side is zero if $k\leq 0$ or $k\geq r$, and therefore~\eqref{eqn:FirstVerification} holds trivially. We now verify~\eqref{eqn:FirstVerification} for $1\leq k\leq r-1$. We split the verification into two cases:

\noindent {\bf Case $k=1$.} Observe that 
       \[
       b_0b_3+b_2(b_0+b_1)\leq 2 b_1b_2\;.
       \]
       After some algebra, this inequality reduces to 
       \begin{equation}
       \label{eqn:ThetavsCr}
            \theta:=\frac{p}{q}\geq \frac{3}{2(r+1)}=:c_r\;.
       \end{equation}
       To show that $\theta\geq c_r$, we first show that $\theta(1+\theta)^r>1$ and $c_r(1+c_r)^r<1$ and use the elementary fact that $t\mapsto t(1+t)^r$ is an increasing function on $[0, \infty)$ for every $r\geq 3$. 

    We first show that $\theta(1+\theta)^r>1$. To this end, recall that $r=\nu-1$. By definition of $\nu$, we have that $ q^{r}\geq p> q^{r+1}$. Using the fact that $q=(1+\theta)^{-1}$ and $p=\theta(1+\theta)^{-1}$, and the fact that $p>q^{r+1}$, we conclude that $\theta(1+\theta)^r>1$. 

    To see that $c_r(1+c_r)^r<1$, we argue that $r\mapsto c_r(1+c_r)^r$ is decreasing for $r\geq 3$ and by a direct computation we evaluate that $c_3(1+c_3)^3<1$ which proves the claim. To prove that $c_r(1+c_r)^r$ is decreasing, define 
    \[
    H(r) :=\log c_r(1+c_r)^r = \log \frac{3}{2(r+1)}\left(1+\frac{3}{2(r+1)}\right)^r\;.
    \]
    We compute the derivative and using the fact that $\log(1+x)<x$, we obtain 
    \begin{align*}
        H'(r) &= -\frac{1}{r+1}+\log\left(1+\frac{3}{2(r+1)}\right) - \frac{3r}{(r+1)(2r+5)}\\
        &\leq  -\frac{1}{r+1}+\frac{3}{2(r+1)} - \frac{3r}{(r+1)(2r+5)}\\
        &= \frac{-4r+5}{2(r+1)(2r+5)}
    \end{align*}
    which is clearly negative when $r\geq 3$. This shows that $H(r)$ is decreasing and therefore so is $c_r(1+c_r)^r$. 

\noindent {\bf Case $2\leq k\leq r-1$.} In this case,~\eqref{eqn:FirstVerification} is equivalent to
       \[
       b_{k-1}b_{k+2}+b_{k+1}b_k \leq 2b_{k}b_{k+1},
       \]
       which is an immediate consequence of the log-concavity of the sequence $k\mapsto b_k$, which follows from Lemma~\ref{lem:binomlogconcave}.

We also need to verify that $A_{0}\sim_w SB_0$. This is equivalent to verifying:
\begin{equation}
\label{eqn:Secondverification}
    A_0(k-1)B_0(k)+A_0(k+1)B_0(k-2) \leq 2A_0(k)B_0(k-1),
\end{equation}
which is trivial except when $1\leq k\leq r-1$. We split this verification into several cases.

\noindent {\bf Case $k=1$.} Using the definitions, in this case, we check that~\eqref{eqn:Secondverification} reduces to 
\[
b_0b_2 \leq 2b_1(b_0+b_1)\;.
\]
Since Lemma~\ref{lem:binomlogconcave} gives $b_0b_2\leq b_1^2$, this is automatic. 

\noindent {\bf Case $k=2$.} In this case,~\eqref{eqn:Secondverification} reduces to 
\[
b_3(b_0+2b_1)\leq 2b_2^2\;.
\]
Note that Lemma~\ref{lem:binomlogconcave} only gives $b_1b_3\leq b_2^2$, which is not sufficient. We verify this by a direct computation. Indeed, using the definition of $b_k$ and expanding we get that $b_3(b_0+2b_1)\leq 2b_2^2$ is equivalent to 
\[
\binom{r}{3}q+2r\binom{r}{3}p \leq 2\binom{r}{2}^2p\;.
\]
Using the elementary inequalities
\[
2\binom{r}{2}^2 - 2r\binom{r}{3} = \frac{r^2(r-1)(r-2)}{6}, 
\]
we check that the above condition is equivalent to 
\[
\frac{p}{q} \geq \frac{r-2}{r(r+1)}\;.
\]
In~\eqref{eqn:ThetavsCr}, we showed that $\frac{p}{q}\geq \frac{3}{2(r+1)}$. The above inequality follows by noting that $\frac{3}{2(r+1)}\geq \frac{r-2}{r(r+1)}$.

\noindent {\bf Case $3\leq k\leq r-1$.} Equation~\eqref{eqn:Secondverification} in this regime is easily seen to be equivalent to 
\[
b_{k-1}b_{k+1}\leq b_k^2,
\]
which is precisely the log-concavity of $(b_k)_{k\in \mathbb{Z}}$ proved in Lemma~\ref{lem:binomlogconcave}.
  
\end{enumerate}

\end{proof}
\subsection{Central Limit Theorem and Local CLT}
In this subsection, we prove Theorem \ref{thm:CLT} and Theorem \ref{thm:LocalCLT} by doing complex analysis on the generating function of $X_n$. We will collect two results of Hwang which are found in \cite{FlaSed}. The first is Hwang's quasi-powers theorem.
\begin{proposition}[{\cite[Theorem IX.8]{FlaSed}}]
\label{prop:hwang_quasi_powers}
    Let the $X_n$ be non-negative discrete random variables with probability generating functions $s_n(v)$. Assume that,
uniformly in a fixed complex neighbourhood of $v = 1$, for sequences
$\beta_n, \kappa_n \to +\infty$, one has
\begin{equation*}
s_n(v)=  A(v)\, B(v)^{\beta_n} \left( 1 + O\!\left( \frac{1}{\kappa_n} \right) \right),
\end{equation*}
where $A(v)$, $B(v)$ are analytic at $v = 1$ and $A(1) = B(1) = 1$.
Assume finally that $B(v)$ satisfies the so-called variability condition,
\[
\fv(B(v))=B''(1) + B'(1) - B'(1)^2 \neq 0.
\]
Under these conditions, the distribution of $X_n$ is, after standardization, asymptotically Gaussian, and the
speed of convergence to the Gaussian limit is $O\left( \kappa_n^{-1} + \beta_n^{-1/2} \right)$, that is,
\begin{equation*}
\mathbb{P}\left(
\frac{X_n - \mathbb{E}(X_n)}{\sqrt{\mathbb{V}(X_n)}} \le x
\right)
=\Phi(x)+O\left(\frac{1}{\kappa_n}+\frac{1}{\sqrt{\beta_n}}\right),
\end{equation*}
where $\Phi(x)$ is the distribution function of a standard normal random variable,
\[
\Phi(x)=\frac{1}{\sqrt{2\pi}}
\int_{-\infty}^{x}
e^{-v^2/2}\, dv.
\]
\end{proposition}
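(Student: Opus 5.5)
Although this proposition is quoted from \cite{FlaSed}, here is how I would prove it directly: combine the uniform quasi-power expansion with the Berry--Esseen smoothing inequality. Choose a small $r>0$ such that $A$ and $B$ are analytic and nonvanishing on $|v-1|\le 2r$, and such that the expansion of $s_n(v)$ holds uniformly there. Substitute $v=e^{s}$ and set $a(s):=\log A(e^s)$, $b(s):=\log B(e^s)$; both are analytic near $s=0$ with $a(0)=b(0)=0$. Note that $\fv(B)=b''(0)$. Writing the error factor as $1+\eta_n(v)$, the hypothesis gives
\[
\log s_n(e^s)=\beta_n b(s)+a(s)+\log\bigl(1+\eta_n(e^s)\bigr),\qquad |\eta_n(v)|=O(\kappa_n^{-1}),
\]
uniformly for $|s|\le r'$.

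\textbf{Step 1: moments.} Since $s_n(1)=A(1)B(1)=1$, the remainder $\eta_n$ vanishes at $v=1$ and is analytic on the disc. The Schwarz lemma together with Cauchy's estimates on a slightly smaller disc give $\eta_n(v)=O(|v-1|/\kappa_n)$ and $\eta_n',\eta_n''=O(1/\kappa_n)$ near $v=1$. Differentiating at $s=0$ then yields
\[
\E(X_n)=\beta_n b'(0)+a'(0)+O(\kappa_n^{-1}),\qquad \Var(X_n)=\beta_n b''(0)+a''(0)+O(\kappa_n^{-1}).
\]
In particular $\sigma_n^2:=\Var(X_n)\sim \fv(B)\,\beta_n$. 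The variability condition, together with $\Var(X_n)\ge 0$ and $\beta_n\to\infty$, forces $\fv(B)>0$, so $\sigma_n\asymp\sqrt{\beta_n}$.

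\textbf{Step 2: characteristic function.} Put $\mu_n:=\E(X_n)$ and $\varphi_n(t):=\E\, e^{it(X_n-\mu_n)/\sigma_n}$, and take $s=it/\sigma_n$ with $|t|\le T:=c\sqrt{\beta_n}$, where $c$ is small enough that $|s|\le r'$. Taylor-expand $b$ and $a$ to third order; the linear terms cancel against $\mu_n$ up to the $O(\kappa_n^{-1})$ discrepancy from Step 1. This gives
\[
\log\varphi_n(t)=-\tfrac{t^2}{2}+O\!\Bigl(\tfrac{|t|^3}{\sqrt{\beta_n}}\Bigr)+O\!\Bigl(\tfrac{|t|}{\kappa_n}\Bigr)+O\!\Bigl(\tfrac{t^2}{\beta_n}\Bigr).
\]
Each error term vanishes at $t=0$, which is exactly why Step 1 needs the factor $|v-1|$ in the bound on $\eta_n$. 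Because $|t|\le c\sqrt{\beta_n}$ with $c$ small, the cubic term is at most $t^2/4$. Using $|e^{w}-1|\le|w|e^{|w|}$ we obtain
\[
|\varphi_n(t)-e^{-t^2/2}|\le e^{-t^2/4}\Bigl(\tfrac{|t|^3}{\sqrt{\beta_n}}+\tfrac{|t|}{\kappa_n}+\tfrac{t^2}{\beta_n}\Bigr).
\]

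\textbf{Step 3: smoothing inequality.} Apply Berry--Esseen:
\[
\sup_x|F_n(x)-\Phi(x)|\le \tfrac1\pi\int_{-T}^{T}\Bigl|\tfrac{\varphi_n(t)-e^{-t^2/2}}{t}\Bigr|\,dt+\tfrac{C}{T}.
\]
The integral is $O(\beta_n^{-1/2}+\kappa_n^{-1})$ by Step 2, and $C/T=O(\beta_n^{-1/2})$. Together these give the stated rate.

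\textbf{Main obstacle.} I expect the delicate step to be the uniform control of the analytic remainder near $v=1$. First, one must verify that $\eta_n$ vanishes at $v=1$, so that dividing by $|t|$ in the smoothing integral costs nothing; without this one only gets a rate like $\kappa_n^{-1}\log\beta_n$. Second, one must check that centering with the exact mean and variance, rather than the asymptotic ones, changes the result only by $O(\kappa_n^{-1}+\beta_n^{-1/2})$. I would handle both with the Cauchy estimates on nested discs described in Step 1.
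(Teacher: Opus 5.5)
Your proposal is correct. It is essentially the standard proof of the quasi-powers theorem, due to Hwang and reproduced in \cite{FlaSed}; the paper itself gives no proof and only cites that source. The ingredients match: Schwarz/Cauchy control of the analytic remainder to obtain the moments, a third-order expansion of the characteristic function, and the Berry--Esseen smoothing inequality.

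One cosmetic fix is needed in Step~2. The $O(|t|/\kappa_n)$ term also enters $e^{|w|}$ and is not covered by your bound $t^2/4$. This is harmless: absorb it via $C|t|/\kappa_n\le t^2/16+4C^2/\kappa_n^2$, which leaves a slightly weaker Gaussian factor. Alternatively, expand the exact cumulant function $\log s_n(e^s)$ directly; its first two Taylor coefficients are exactly $\mu_n$ and $\sigma_n^2/2$, so only the $O(|t|^3/\sqrt{\beta_n})$ error remains.
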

Under additional assumptions on the probability generating functions as in Proposition \ref{prop:hwang_quasi_powers}, we can obtain a local limit law.
\begin{proposition}[{\cite[Theorem IX.14]{FlaSed}}]
\label{prop:localquasipowers}
    Let $X_n$ be a sequence of non-negative discrete random variables with probability generating functions $s_n(v)$. Assume that the functions $s_n(v)$ satisfy the conditions of Proposition \ref{prop:hwang_quasi_powers}, where, in particular, the quasi-power approximation
    \[s_n(v)=  A(v)\, B(v)^{\beta_n} \left( 1 + O\!\left( \frac{1}{\kappa_n} \right) \right)\]
    holds uniformly in a fixed complex neighbourhood $U$ of $1$. In addition, assume that there exists a uniform bound
    \[|s_n(v)|\leq K^{-\beta_n}\]
    for some $K>1$ and all $v$ in the intersection of the unit circle and the complement $\mathbb{C}\setminus  U$. Then, the distribution of $X_n$ satisfies a local limit law of the Gaussian type with speed of convergence $O\left(\beta_n^{-1/2}+\kappa_n^{-1}\right)$.
\end{proposition}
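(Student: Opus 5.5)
The plan is to prove this by Fourier inversion on the unit circle, splitting the integral into a neighbourhood of $t=0$ and its complement. Writing $v=e^{it}$, we have for every integer $k$
\[
p_{n,k}=\frac{1}{2\pi}\int_{-\pi}^{\pi} s_n(e^{it})\,e^{-ikt}\,dt .
\]
Fix $\delta>0$ so small that the arc $\{e^{it}:|t|\le\delta\}$ lies inside $U$. The remaining arc lies on the unit circle outside $U$, where the assumption $|s_n(v)|\le K^{-\beta_n}$ applies. So that part of the integral is $O(K^{-\beta_n})$. After multiplying by $\sigma_n$, which is of order $\beta_n^{1/2}$ by the first step below, it is still exponentially small and negligible against $\beta_n^{-1/2}$.

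First I would record the consequences of the quasi-power hypothesis. Write $m:=B'(1)$ and $\mathfrak{s}^2:=\fv(B(v))>0$; since the rest of the argument needs $\mathfrak{s}^2>0$, I take the variability condition in this form, which holds when $B$ is a probability generating function. Because the approximation holds uniformly on a complex neighbourhood, Cauchy estimates let us differentiate it. This gives
\[
\mu_n=m\beta_n+A'(1)+O(\kappa_n^{-1}),\qquad \sigma_n^2=\mathfrak{s}^2\beta_n+O(1).
\]
This is exactly the mechanism behind Proposition~\ref{prop:hwang_quasi_powers}, which I may assume. Next I would expand near $t=0$:
\[
\log B(e^{it})=imt-\tfrac{\mathfrak{s}^2}{2}t^2+O(t^3).
\]
Shrinking $\delta$ if necessary, this yields $\operatorname{Re}\log B(e^{it})\le -c\,t^2$ for $|t|\le\delta$, with some constant $c>0$.

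The central step is a Laplace-type estimate on $|t|\le\delta$. Put $k=\lfloor\mu_n+x\sigma_n\rfloor$, substitute $t=u/\sigma_n$, and use
\[
s_n(e^{it})=A(e^{it})\,B(e^{it})^{\beta_n}\bigl(1+O(\kappa_n^{-1})\bigr).
\]
I would split the range further into two pieces.
\begin{itemize}
\item On $|t|\le \beta_n^{-1/3}$ the integrand equals
\[
e^{-u^2/2-iu(k-\mu_n)/\sigma_n}\bigl(1+O(|u|/\sqrt{\beta_n}+|u|^3/\sqrt{\beta_n}+\kappa_n^{-1})\bigr).
\]
The linear phase cancels against the centring by $\mu_n$ up to $O(1)$, and the floor contributes a phase error of $O(|u|/\sigma_n)$.
\item On $\beta_n^{-1/3}\le|t|\le\delta$ the bound $|B(e^{it})^{\beta_n}|\le e^{-c\beta_n t^2}\le e^{-c\beta_n^{1/3}}$ makes the contribution negligible.
\end{itemize}
Integrating, $\sigma_n p_{n,k}$ equals the Gaussian Fourier integral $\frac{1}{2\pi}\int e^{-u^2/2-iux}\,du=\frac{1}{\sqrt{2\pi}}e^{-x^2/2}$ plus an error. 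The error terms $\int(|u|+|u|^3)e^{-u^2/2}\,du/\sqrt{\beta_n}$ and the tail truncation are $O(\beta_n^{-1/2}+\kappa_n^{-1})$, and none of them depends on $x$.

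I expect the main obstacle to be this uniformity in $x$ together with the bookkeeping of the error terms. The difficulty is replacing $(k-\mu_n)/\sigma_n$ by $x$ while keeping an $x$-independent error. Since the error terms are bounded after taking absolute values inside the integral, independently of the phase, the $x$-uniformity should follow once the phase mismatch $O(1/\sigma_n)$ is absorbed via $|e^{i\theta}-1|\le|\theta|$. Combining the three regions then gives
\[
\sup_x\Bigl|\sigma_n p_{n,\lfloor\mu_n+x\sigma_n\rfloor}-\tfrac{1}{\sqrt{2\pi}}e^{-x^2/2}\Bigr|=O\bigl(\beta_n^{-1/2}+\kappa_n^{-1}\bigr),
\]
which is the local limit law with the claimed speed.
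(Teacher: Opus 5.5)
Your overall route is the standard argument behind Flajolet--Sedgewick's Theorem IX.14, which the paper cites rather than proves: Fourier inversion on $|v|=1$, Laplace's method on a small arc after $t=u/\sigma_n$, and exponential smallness elsewhere. Your central-arc bookkeeping is fine, with all errors bounded independently of $x$.

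The genuine gap is in how you cover the circle. Choosing $\delta$ with $\{e^{it}:|t|\le\delta\}\subseteq U$ does not make the complementary arc disjoint from $U$. Once you shrink $\delta$ to get $\operatorname{Re}\log B(e^{it})\le -ct^2$, the set $\{e^{it}\in U:\ |t|>\delta\}$ is in general nonempty. On it you have neither the Gaussian bound nor $|s_n|\le K^{-\beta_n}$: the quasi-power form together with $|s_n|\le 1$ gives at most $|B|\le 1$ there.

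This cannot be repaired from the hypotheses as literally stated. Take $X_n=3\operatorname{Bin}(n,\tfrac12)$, so $s_n=B^n$ with $B(v)=(1+v^3)/2$ and $\fv(B)=9/4$. With $U=\{|v-1|<1.9\}$ one checks $|B|\le 0.82$ on $S^1\setminus U$, so every hypothesis holds. Yet $X_n$ is supported on $3\mathbb{Z}$ and has no Gaussian local limit; the obstruction sits at $e^{\pm 2\pi i/3}\in U$.

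You must therefore state explicitly the assumption Flajolet--Sedgewick use tacitly. The bound $|s_n(e^{it})|\le K^{-\beta_n}$ must hold on $\{\delta\le|t|\le\pi\}$ for every small $\delta>0$, with $K=K(\delta)$; equivalently, $U$ may be taken as small as you like. With this, your three regions cover $[-\pi,\pi]$ and the proof closes. In the paper's application the assumption is automatic, because $|\lambda_\pm(v)|<1$ on all of $S^1\setminus\{1\}$.

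A smaller point: you need not assume that $B$ is a probability generating function. In the paper's application it is not, since $B=\lambda_+$ has a branch point at $v=0$. Your own first step gives $B'(1)=\lim \mu_n/\beta_n\in\mathbb{R}$ and $\fv(B)=\lim\sigma_n^2/\beta_n\ge 0$. Hence $\fv(B)\neq 0$ already forces $\mathfrak{s}^2>0$. The reality of $m$ is exactly what makes $\operatorname{Re}(imt)=0$ in your bound $\operatorname{Re}\log B(e^{it})\le -ct^2$.
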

The outline of the remainder of this subsection is as follows. We first prove Theorem \ref{thm:CLT} and Theorem \ref{thm:LocalCLT} in the $p\geq 1/2$ case. Then we prove both theorems in the $p<1/2$ case.
\begin{proof}[Case $1$: $p\geq 1/2$.]
First, we prove Theorem \ref{thm:CLT}. We aim to apply Proposition \ref{prop:hwang_quasi_powers}. Our starting point is the generating function for $X_n$ from Proposition \ref{prop:asym-pgf}, that is,
\[s_n(v)=\frac{\bigl(s_2(v)-\lambda_-(v)s_1(v)\bigr)\lambda_+(v)^{\,n-1}
+\bigl(\lambda_+(v)s_1(v)-s_2(v)\bigr)\lambda_-(v)^{\,n-1}}
{\lambda_+(v)-\lambda_-(v)},
\] 
which holds for all $n\geq 1$ and all $p\geq 1/2$. Letting
\[A_1(v)=\frac{s_2(v)-\lambda_-(v)s_1(v)}{\lambda_+(v)-\lambda_-(v)}\]
and 
\[A_2(v)=\frac{\lambda_+(v)s_1(v)-s_2(v)}{\lambda_+(v)-\lambda_-(v)},\]
we observe that
\[s_n(v)=A_1(v)\lambda_+(v)^{n-1}\left(1+\left(\frac{A_2(v)}{A_1(v)}\right)\left(\frac{\lambda_-(v)}{\lambda_+(v)}\right)^{n-1}\right).\]

Since $\lambda_+(1)=1$ and $\lambda_-(1)=0$ and $\lambda_+$ and $\lambda_-$ are analytic, there exists a neighborhood $U$ of $1$ and a constant $\rho>1$ such that $|\lambda_-(v)/\lambda_+(v)|\leq1/\rho$ for all $v\in U$. A similar argument (since $A_1(1)=1$) shows that, shrinking $U$ if necessary, there exists a constant $C>0$ such that $|A_2(v)/A_1(v)|\leq C$ for all $v\in U$. It follows that
\begin{equation}
\label{eq:estimateonsn}
    s_n(v)=A_1(v)\lambda_+(v)^{n-1}\left(1+O(\rho^{-n})\right)
\end{equation}
uniformly for $v\in U$. 

Using the notation from Proposition \ref{prop:hwang_quasi_powers}, we check that
\begin{equation}
\label{eq:fvlambda+}
    \fv(\lambda_+)=\lambda_+''(1) + \lambda_+'(1) - \lambda_+'(1)^2=pq(1-3pq)\neq 0.
\end{equation}

Therefore, by Proposition \ref{prop:hwang_quasi_powers}, we have
\[\mathbb{P}\left(
\frac{X_n - \mathbb{E}(X_n)}{\sqrt{\mathbb{V}(X_n)}} \le x
\right)
=\Phi(x)+O\left(\frac{1}{\rho^n}+\frac{1}{\sqrt{n}}\right),
\]
where $\Phi(x)$ is the distribution function of a standard normal random variable. This implies Theorem \ref{thm:CLT} in the $p\geq 1/2$ case.

Next, we prove the local CLT in the $p\geq 1/2$ case. We want to apply Proposition \ref{prop:localquasipowers}. Set $\alpha=4pq$. As in Proposition \ref{prop:asym-pgf}, rewrite $\Delta(v)=v^2-\alpha v(v-1)=v((1-\alpha)v+\alpha)$. Assume that $|v|=1$. If $v\neq 1$, then $(1-\alpha)v+\alpha$ is a strict convex combination of the distinct points $v$ and $1$ on the unit circle $S^1$. Therefore, $|(1-\alpha)v+\alpha|<1$, so that $|\Delta(v)|<1$, which implies that $|\sqrt{\Delta(v)}|<1$ for every $v\in S^1\setminus\{1\}$. Combining this estimate with the triangle inequality implies that
\[|\lambda_{\pm}(v)|\leq\frac{|v|+|\sqrt{\Delta(v)}|}{2}<1\] 
for all $v\in S^1\setminus\{1\}$. With $U$ the open neighborhood of $1$ as before, by continuity and compactness, there exists a constant $K>1$ such that $\max\{|\lambda_-(v)|,|\lambda_+(v)|\}\leq 1/K$ for all $v\in S^1\setminus U$. The functions $A_1(v)$ and $A_2(v)$ extend continuously across any repeated-root point, so they are bounded on the compact set $S^1\setminus U$. Thus, there is a constant $M>0$ such that $|A_1(v)|+|A_2(v)|\leq M$ for all $v\in S^1\setminus U$. The decomposition
\[s_n(v)=A_1(v)\lambda_+(v)^{n-1}+A_2(v)\lambda_-(v)^{n-1}\]
implies that
\[|s_n(v)|\leq MK^{-(n-1)}\]
for all $v\in S^1\setminus U$. Combining this with the estimate in Equation \eqref{eq:estimateonsn} and Proposition \ref{prop:localquasipowers} yields the local CLT in Theorem \ref{thm:LocalCLT}.

\end{proof}
\begin{proof}[Case $2$: $p<1/2$]
    Again, we start with the appropriate generating function $s_n(v)$ for $X_n$. From Proposition \ref{prop:asym-pgf}, we have that for all large enough $n$ ($n\geq \nu$) and all $p\in(0,1/2)$,
        \[
            s_n(v)
            =
            \frac{\bigl(s_{\nu+1}(v)-\lambda_-(v)s_\nu(v)\bigr)\lambda_+(v)^{\,n-\nu}
            +\bigl(\lambda_+(v)s_\nu(v)-s_{\nu+1}(v)\bigr)\lambda_-(v)^{\,n-\nu}}
            {\lambda_+(v)-\lambda_-(v)},
        \]
    where $s_{\nu}$, $s_{\nu+1}$ and $\lambda_-$, $\lambda_+$ are defined in the proposition. Using a similar factorization as in the $p\geq 1/2$ case, we define
    \[A_1(v)=\frac{s_{\nu+1}(v)-\lambda_-(v)s_\nu(v)}{\lambda_+(v)-\lambda_-(v)}\]
    and
    \[A_2(v)=\frac{\lambda_-(v)s_\nu(v)-s_{\nu+1}(v)}{\lambda_+(v)-\lambda_-(v)}\]
    so that
    \[s_n(v)=A_1(v)\lambda_+(v)^{n-\nu}\left(1+\left(\frac{A_2(v)}{A_1(v)}\right)\left(\frac{\lambda_-(v)}{\lambda_+(v)}\right)^{n-\nu}\right).\]
    A similar argument as in the $p\geq 1/2$ case (noting $A_1$, $A_2$ are analytic and $A_2(1)=0$ and $A_1(1)=0$) implies that
    \begin{equation}
    \label{eq:estimatepgeq12}
        s_n(v)=A_1(v)\lambda_+(v)^{n-\nu}\left(1+O(1/\rho^{n-\nu})\right)
    \end{equation}
    for all $\rho$ in a fixed complex neighborhood $U$ of $1$. We computed that $\fv(\lambda_+)\neq 0$ in Equation $\eqref{eq:fvlambda+}$, so that by Proposition \ref{prop:hwang_quasi_powers},
    \begin{equation}
    \label{eq:convdistnufixed}   
        \mathbb{P}\left(
            \frac{X_n - \mathbb{E}(X_n)}{\sqrt{\mathbb{V}(X_n)}} \le x
            \right)
            =\Phi(x)+O\left(\frac{1}{\rho^{n-\nu}}+\frac{1}{\sqrt{n-\nu}}\right).
    \end{equation}
    Noting that $\nu$ is fixed, Equation $\eqref{eq:convdistnufixed}$ implies the CLT in the case when $p< 1/2$.

    Next, we prove the local CLT in the $p<1/2$ case. A similar argument from the $p\geq 1/2$ case implies that there is a constant $M>0$ and a constant $K>1$ (not necessarily the same constants as in the $p\geq 1/2$ case) such that
    \[|s_n(v)|\leq MK^{-(n-\nu)}\]
    uniformly on $S^1\setminus U$. Combining this estimate with the asymptotic in Equation $\eqref{eq:estimatepgeq12}$ and applying Proposition \ref{prop:localquasipowers} yields the local CLT when $p<1/2$.
\end{proof}
\subsection{Large Deviation Principle}
In this subsection, we establish the large deviation principle for the number of correct guesses $X_n$ in Theorem \ref{thm:LDP}. The proof is a direct application of the G\H{a}rtner--Ellis theorem~\cite[Theorem 2.3.6]{DZ10}. Identifying the rate function is the main technical part of the proof.
\begin{proof}
 Let us define the cumulant generating function for $X_n/n$:
\[
\Lambda_n(\theta) = \log \E\left[e^{\theta \frac{X_n}{n}}\right].
\]
And, let us define
    \[
    \Lambda_p(\theta):= \lim_{n\to \infty}\frac{1}{n}\Lambda_n(n\theta) = \lim_{n\to \infty}\frac{1}{n}\log\mathbb{E}[e^{\theta X_n}]\;.
    \]
Noting that $\E[e^{\theta X_n}]=s_n(e^{\theta})$ and using the estimates in~\eqref{eq:estimateonsn} and~\eqref{eq:estimatepgeq12}, we obtain
    \[\Lambda_p(\theta) = \log \lambda_{+}(e^{\theta})\;.\]
    It follows from the G\H{a}rtner--Ellis theorem (~\cite[Theorem 2.3.6]{DZ10}) that $X_n/n$ satisfies an LDP with speed $n$ and rate function given by the Legendre transform
    \[
    I_p(x):= \sup_{\theta\in \R}[\theta\cdot x - \Lambda_p(\theta)]\;.
    \]
We now evaluate the rate function $I_p$. To this end, we recall that 
    \[
    \lambda_{+}(e^{\theta}) = \frac{e^\theta}{2}(1+\sqrt{\tau+4\alpha e^{-\theta}}),
    \]
    where $\alpha=p(1-p)$ and $\tau=1-4\alpha=(2p-1)^2$. In particular, 
    \[
    \Lambda_p(\theta) = \theta-\log 2 +\log (1+\sqrt{\tau+4\alpha e^{-\theta}}).
    \]
    It is easy to check via a direct computation that $\Lambda_p'(\theta)\in (1/2, 1)$ and 
    \[
    \lim_{t\to -\infty}\Lambda_p'(t) = \frac{1}{2},\qquad \lim_{t\to +\infty}\Lambda_p'(t) = 1.
    \]
    In particular, if $x\not\in [1/2, 1]$, then $I_p(x)=+\infty$. Furthermore, notice that $\Lambda_p''(\theta)>0$; therefore, $\theta\mapsto \theta\cdot x-\Lambda_p(\theta)$ is strictly concave and has a unique maximum at 
   \[
    x = \Lambda_p'(\theta) = \frac{y^2+2y+\tau}{2y^2+2y},
    \]
    where $y=y_p(\theta)=\sqrt{\tau+4\alpha e^{-\theta}}$. Since $x\in (1/2, 1)$, we get that the unique solution is
    \[
    y=y_p(x) = \frac{(1-x)+\sqrt{(1-x)^2+\tau(2x-1)}}{(2x-1)}\;.
    \]
   Using the fact that $e^{-\theta} = \frac{y^2-\tau}{4\alpha}$, we write 
   \begin{align*}
      I_p(x) &= -x\log e^{-\theta} - \log\frac{e^{\theta}}{2} - \log (1+y)\\
      &= (1-x)\log \frac{y^2-\tau}{4\alpha} +\log 2 - \log (1+y)\;.
   \end{align*}
   It is easy to verify that $I_p$ is a good rate function. 
\end{proof}

\subsection{Phase Transitions}
In this subsection, we establish the phase transition in Theorem \ref{thm:PhaseTransition}.
\begin{proof}[Proof of Theorem \ref{thm:PhaseTransition}(1)]
    Let $p=1-\lambda/n^{\alpha}$. It suffices to show that $n-X_n\to 0$ in $L^1$. Taking expectations (and noting trivially that $n-X_n\geq 0$ a.s.), we obtain from Theorem \ref{thm:MeanAndVariance} that
    \begin{equation}
        \begin{aligned}
            \E|n-X_n|&=n(1-\lambda/n^{\alpha})(\lambda/n^{\alpha}))+2(1-\lambda/n^{\alpha})-2(1-\lambda/n^{\alpha})^2-\lambda/n^{\alpha}\\
            &=\lambda/n^{\alpha-1}+O(n^{-\alpha})
        \end{aligned}
    \end{equation}
    which tends to zero whenever $\alpha>1$. 
\end{proof}
\begin{remark}
    One can also see that $X_n=n$ if and only if the shuffle results in the the identity permutation $(1,2,\ldots,n)$ of cards, which happens with probability $p^{n-1}=(1-\lambda/n^{\alpha})^{n-1}$. This probability tends to $1$ when $n\to\infty$ if $\alpha>1$, giving another proof of Theorem \ref{thm:PhaseTransition}.
\end{remark}
Recall that a random variable $X$ is $\text{Poisson}(\lambda)$-distributed if $\mathbb{P}(X=k)=e^{-\lambda}\lambda^k/k!$ for all integers $k\geq 0$. A $\text{Poisson}(\lambda)$ limiting distribution of the number of correct guesses when $p=1-\lambda/n$ can be deduced directly from the probability mass function of $X_n$. 
\begin{proof}[Proof of Theorem \ref{thm:PhaseTransition}(2)]
    Let $p=1-\lambda/n$, and let $q=1-p$. From replacing $k$ with $n-k$ in Theorem \ref{thm:DistributionXn}, we obtain that
    \[\mathbb{P}(n-X_n=k)=\sum_{m=k}^{n-k}\binom{m-1}{k-1}\binom{n-m}{k}q^{m}p^{n-1-m}\]
    for $1\leq k\leq n/2$. We will take the limit as $n\to\infty$ of this sum and match it with the mass function of the Poisson distribution. First, factor out the weights involving $p$ and $q$, and reindex by letting $j=m-k$. We obtain
    \[\mathbb{P}(n-X_n=k)=q^kp^{n-k-1}\sum_{j=0}^{n-2k}\binom{j+k-1}{k-1}\binom{n-k-j}{k}(q/p)^j.\]
    Note that $q^kp^{n-k-1}\sim e^{-\lambda} \lambda^k/n^k$. Now, let
    \[M_{n,k}(x)=\sum_{j=0}^{n-2k}\binom{j+k-1}{k-1}\binom{n-k-j}{k}x^j.\]
    Then note the trivial lower bound by the $j=0$ term, that is,
    \[M_{n,k}(x)\geq \binom{n-k}{k}.\]
    For an upper bound, note that for all $j\geq 0$, we have $\binom{n-k-j}{k}\leq \binom{n}{k}$, so that
    \begin{equation} 
        \begin{aligned}
            M_{n,k}(x)&\leq\binom{n}{k}\sum_{j=0}^{n-2k}\binom{j+k-1}{k-1}x^j\\
            &\leq\binom{n}{k}\sum_{j=0}^{\infty}\binom{j+k-1}{k-1}x^j \\
            &=\binom{n}{k}\frac{1}{(1-x)^k}.
        \end{aligned}
    \end{equation}
    Furthermore, we have $x=q/p=\lambda/(n-\lambda)\to 0$ so that $(1-x)^{-k}\sim 1$. It follows that
    \[\binom{n-k}{k}\leq M_{n,k}(x)\lesssim\binom{n}{k}\]
    so that
    \begin{equation}
    \label{eq:poissonexpressionXn}
        \frac{e^{-\lambda} \lambda^k}{n^k}\binom{n-k}{k}\lesssim\mathbb{P}(n-X_n=k)\lesssim \frac{e^{-\lambda} \lambda^k}{n^k}\binom{n}{k}.
    \end{equation}
    Note that both $\binom{n}{k}\sim n^k/k!$ and $\binom{n-k}{k}\sim n^k/k!$, and observe that $\mathbb{P}(n-X_n=0)=\mathbb{P}(X_n=n)=p^{n-1}\to e^{-\lambda}$. The result is then immediate after passing to the limit in Equation $\eqref{eq:poissonexpressionXn}$.
\end{proof}
Theorem \ref{thm:PhaseTransition} can be deduced from the log-concavity of $X_n$ demonstrated in Theorem \ref{thm:logConcave}. 
\begin{proof}[Proof of Theorem \ref{thm:PhaseTransition}(3)]
Taking the asymptotics of the variance in Theorem \ref{thm:MeanAndVariance} and substituting $p=1-\lambda/n^{\alpha}$, we have
\begin{equation} 
    \begin{aligned}
        \Var(X_n)&=(1-p)p(3p^2-3p+1)n+p(1-p)(10p-8p^2-3)\\                &=\lambda n^{1-\alpha}
-\lambda n^{-\alpha}
-4\lambda^2 n^{1-2\alpha}
+7\lambda^2 n^{-2\alpha}
+6\lambda^3 n^{1-3\alpha}\\
&\;\;\;\;\;\;\;\;\;\;\;\;\;-14\lambda^3 n^{-3\alpha}
-3\lambda^4 n^{1-4\alpha}
+8\lambda^4 n^{-4\alpha}.\\      
    \end{aligned}
\end{equation}
where a computer algebra system was used to simplify the expression. The first term $\lambda n^{1-\alpha}$ gives the leading order asymptotics, so that $\Var(X_n)\to\infty$ when $\alpha\in(0,1)$. Since $X_n$ is log-concave by Theorem \ref{thm:logConcave}, Harper's method \cite{Pitman} implies that, centered by its mean and scaled by its standard deviation, $X_n$ is asymptotically normal when $\Var(X_n)\to\infty$, i.e. when $\alpha\in(0,1)$.

\end{proof}
Finally, we address the phase transition near $p=0$. We recall that the Lambert $W$-function $W(x)$ \cite{LambertWFunction} is defined as the inverse of the mapping $x\mapsto xe^x$.
\begin{proof}[Proof of Theorem \ref{thm:phasetransitionp0}]
        Directly from the definition of $\nu$ in Equation $\eqref{eq:defofnu}$, we have that $n>\nu$ (resp. $n\leq\nu$) when $p$ is chosen such that $p>(1-p)^{n-1}$ (resp. $p\leq (1-p)^{n-1}$). Since we are assuming that $p\to 0$, it suffices to analyze the equation $p=(1-p)^n$ when $n$ is large. Let $p_n\in(0,1)$ be the unique sequence which satisfies the equation $p_n=(1-p_n)^n$ for all $n\geq 1$. Then, as $n\to\infty$, we have
        \begin{equation*}
        \begin{aligned}
            p_n&=(1-p_n)^n\\
            &=e^{n\log(1-p_n)}\\
            &\sim e^{-np_n+O(np_n^2)}\\
        \end{aligned}
        \end{equation*}
        so that, multiplying both sides by $ne^{np_n}$, we have
        \begin{equation}
        \label{eq:pnasymptotic}
            np_ne^{np_n}\sim ne^{O(np_n^2)}.
        \end{equation}
        If we assume that $p_n\ll n^{-1/2}$ and take $x=np_n$, we find that Equation $\eqref{eq:pnasymptotic}$ is satisfied if and only if $xe^x\sim n$, i.e. $x\sim W(n)$, where $W$ is the Lambert $W$-function. Therefore, we have the asymptotic $p_n\sim W(n)/n$. Now, the function $W(n)$ has the asymptotic
        \[W(n)\sim\ln n-\ln\ln n+o(1)\]
        given by \cite[Equation (4.18)]{LambertWFunction}. We deduce that when
        \[p\ll p_n\sim\frac{\ln n-\ln\ln n}{n},\]
        we have $n\leq\nu$ for all large $n$, and when $p\gg p_n$, we have that $n>\nu$ for all large $n$.

        For all $n\geq 1$, we let $Y_n$ have the probability mass function given by Theorem $\ref{thm:DistributionXn}(2)$, and let $Z_n$ have the probability mass function given by Theorem $\ref{thm:DistributionXn}(3)$. By Theorem $\ref{thm:DistributionXn}$, for any $0<p<1/2$ and all $n\geq 1$, we have
        \begin{equation}
        \label{eq:decompXnindicators}
            X_n\law Y_n\mathbb{I}(n\leq \nu)+Z_n\mathbb{I}(n> \nu).
        \end{equation}
        Slutsky's theorem implies that $X_n$ has the same limiting behavior as $Y_n$ (resp. $Z_n$) when $p$ is chosen such that $n\leq\nu$ (resp. $n>\nu$) for all large $n$. 
        
        A quick glance at the mass function of $Y_n$ in Theorem $\ref{thm:DistributionXn}(2)$ gives that $Y_n \law 1+\operatorname{Bin}(n-1,q)$, so that $\Var(Y_n)\to\infty$ if $p\gg 1/n$. The classical central limit theorem and Slutsky's theorem applied to $Y_n$ yields Theorem $\ref{thm:phasetransitionp0}(1)$. On the other hand, notice that $\P(Y_n=n)=q^{n-1}\to 1$ when $p\ll 1/n$, which yields Theorem $\ref{thm:phasetransitionp0}.2$. 

        If $p\gg (\ln n-\ln \ln n)/n$, we can appeal to the log-concavity of $Z_n$ established in Theorem $\ref{thm:logConcave}$. Harper's method \cite{Pitman} implies that $Z_n$, centered by its mean and variance, converges to a standard normal random variable when $\Var(Z_n)\to\infty$. 
        Before analyzing $\Var(Z_n)$, we note two facts. First,
        \begin{equation}
        \label{eq:asymptoticforq^nu}
            q^{\nu}=q^{\lfloor\ln(p)/\ln(q)\rfloor+1}\sim q^{\ln(p)/\ln(q)+1}=pq.
        \end{equation}
        Second, using a Taylor expansion for $\ln(1-p)$, we have
        \begin{equation}
        \label{eq:asymptoticfornu}
            \nu\sim \frac{|\ln(p)|}{|\ln(1-p)|}\sim\frac{|\ln(p)|}{p+\frac{p^2}{2}+O(p^3)}\sim\frac{|\ln(p)|}{p}+o(1).
        \end{equation}
        Hence, we have that for all $n\geq \nu+2$ and any sequence $p\to 0$,
        \begin{equation}
        \label{eq:asymptoticvarzn}
        \begin{aligned}
            \Var(Z_n)&=npq(1-3pq)-2pq+(3\nu+5)p^2q^2+\bigl(1-2\nu p+2p^2\bigr)q^\nu-q^{2\nu}\\
            &= npq(1-3pq)-2pq+(3\nu+5)p^2q^2+\bigl(1-2\nu p+2p^2\bigr)pq-p^2q^{2}+o(1)\\
            &=npq(1-3pq)+\nu (3p^2q^2-2p^2 q) +o(1)\\
            &= npq(1-3pq)+p^{-1}|\ln(p)|(3p^2q^2-2p^2q)+o(1)\\
            &=npq(1-3pq)+p|\ln(p)|(3q^2-2q)+o(1)\\
            &= npq(1-3pq)-2p|\ln(p)|+o(1)\\
            &=npq(1-3pq)+o(1).
        \end{aligned}
        \end{equation}
        The first equality used Theorem $\ref{thm:MeanAndVariance}$, the second and fourth used Equations $\eqref{eq:asymptoticforq^nu}$ and $\eqref{eq:asymptoticfornu}$, respectively, and the last used the fact that $p|\ln(p)|\to 0$ as $p\to 0$. It follows that $\Var(Z_n)\to\infty$ for all $p\gg 1/n$. In particular, this means that $\Var(Z_n)\to\infty$ when $p\gg (\ln n-\ln\ln n)/n$, the threshold at which $Z_n$ and $X_n$ begin to share the same limiting behavior for all large $n$. This concludes the proof.
yield\end{proof}

\section{Discussion and Open Questions}
\subsection{Complete feedback game} The asymmetric single-shelf shuffle can be generalized to an asymmetric $(m,p)$-shelf shuffle as done in Tripathi \cite{Tri26cut} \cite{tri26analysis}. Setting $p=1/2$ in this model recovers the $m$-shelf shuffle introduced in \cite{DiaconisFulmanHolmes2013}, and setting $m=1$ and fixing $p$ recovers the asymmetric single shelf shuffle from this paper. In \cite[Section 5.1]{DiaconisFulmanHolmes2013}, Diaconis, Fulman, and Holmes conjectured an optimal guessing strategy with complete feedback for the symmetric $m$-shelf shuffle. Clay \cite{Clay2025a} proved their conjecture in the $m=1$ case, which is recovered by taking $p=1/2$ in Proposition $\ref{prop:Strategy}$. He added \cite[Conjecture 3]{Clay2025a}, without proof, that when the ratio of cards to shelves $n/m$ is ``not too large", the expected reward under their strategy should be approximately $nH_{2m}/2m$, where $H_{2m}=1+1/2+1/3+\ldots+1/2m$ is the $2m$-th harmonic number. This agrees with Monte-Carlo simulations in \cite{DiaconisFulmanHolmes2013}. Complete-feedback card guessing for the asymmetric $(m,p)$-shelf shuffle has not yet been analyzed rigorously for $m\geq 2$, even in the case when $p=1/2$.

\subsection{No feedback game}
In a no-feedback game, the player guesses the cards sequentially from a shuffled deck. After each guess, the card is removed from the deck. In this setup, however, the player gets no further information. The goal of the player is to maximize the expected number of correct guesses. 
In this case, it is easy to describe the optimal strategy abstractly. At location $j$, the player should guess a card $i$ that has the maximum chance of appearing at location $j$. More precisely, an optimal guessing strategy will be to guess the label $g_j$ at location $j$ such that 
\[
g_j\in \arg\max\{m_{i, j}: 1\leq i\leq n\}\;,
\]
where $m$ is the position matrix in Proposition~\ref{prop:posi}.
An optimal strategy is generally not unique. However, for any optimal strategy, the expected number of correct guesses is 
\[
S_n := \sum_{j=1}^{n} \max_{1\leq i\leq n} m_{i, j}\;.
\]

The no-feedback case seems more challenging, and there are only partial results known even for $p=1/2$. For the symmetric single-shelf shuffle, the no feedback game has been analyzed in~\cite{Clay2025a, Tripathi2026}. However, the exact determination of an optimal strategy remains elusive even in this case. Only the asymptotics of $S_n$ are known when $p=1/2$. Following the same idea as in~\cite{Clay2025a, Tripathi2026}, we can obtain asymptotics for the expected number of correct guesses in no feedback game under the general asymmetric single-shelf shuffle. 
\begin{theorem}
\label{thm:ScoreNoFeedback}
Fix $p\in (0, 1)$. Let $q:=1-p$ and let $S_n$ be the expected number of correct guesses under an optimal straegy in no feedback game after an asymmetric single-shelf shuffle. Then, there exists a constant $C=C_p>0$ depending only on $p$ such that 
\[
\left|S_n - A(p)\sqrt{n}\right|\leq C\; \quad \text{for all } n\geq 1,
\]
where 
\[
 A(p)=\sqrt{\frac{2}{\pi}}
 \left(
 \frac{p^{3/2}}{\sqrt q}+
 \frac{q^{3/2}}{\sqrt p}
 \right).
\]
\end{theorem}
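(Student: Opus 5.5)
The plan is to work directly from the formula $S_n=\sum_{j=1}^n\max_{1\le i\le n} m_{i,j}$ stated just above, using Proposition~\ref{prop:posi}. First I rewrite the two summands of $m_{i,j}$ probabilistically:
\[
m_{i,j}=p\,\P\bigl(\mathrm{Bin}(i-1,p)=j-1\bigr)+q\,\P\bigl(\mathrm{Bin}(i-1,q)=n-j\bigr)=:T_1(i,j)+T_2(i,j).
\]
Then I sandwich the maximum using
\[
\max\Bigl\{\max_i T_1,\ \max_i T_2\Bigr\}\le \max_i m_{i,j}\le \max_i T_1+\max_i T_2 .
\]
The heuristic is as follows. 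For fixed $k=j-1$, the map $N\mapsto \P(\mathrm{Bin}(N,p)=k)$ is unimodal with peak near $N\approx k/p$. There the variance is $Npq\approx kq$, so the peak value is $\approx 1/\sqrt{2\pi qk}$. This peak is attainable with $N=i-1\le n-1$ exactly when $j\lesssim pn$. Symmetrically, $T_2$ peaks at $i-1\approx (n-j)/q$ with value $\approx q/\sqrt{2\pi p(n-j)}$, attainable exactly when $j\gtrsim pn$.

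\textbf{Step 1 (uniform local estimate).} Using Stirling's formula with explicit error, I will show that $\max_{N\ge k}\P(\mathrm{Bin}(N,p)=k)=\frac{1}{\sqrt{2\pi qk}}+O_p(k^{-3/2})$ uniformly in $k\ge1$. The argument is to locate the maximizer: from the ratio of consecutive terms, $\frac{N+1}{N+1-k}\,q\ge1$ identifies it as $N^*=\lfloor k/p\rfloor$ or a neighbour. I then evaluate the mass there by the local CLT with a correction term. I expect this to be the main technical obstacle, because the $O(k^{-3/2})$ error must be uniform so that it is summable over $k$. The same estimate applies to $T_2$ with $p\leftrightarrow q$ and $k=n-j$.

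\textbf{Step 2 (bulk ranges).} Take $1\le j\le pn-\sqrt n$. The unconstrained maximizer of $T_1$ satisfies $N^*\le n-1$, so $\max_i T_1=p/\sqrt{2\pi q(j-1)}+O((j-1)^{-3/2})$; the $j=1$ term is trivially $O(1)$. Meanwhile $T_2(i,j)\le q\,\P(\mathrm{Bin}(n-1,q)\ge n-j)$ up to the monotonicity in $i$. Since $n-j\ge qn+\sqrt n$, Hoeffding plus the local bound give $T_2\le C n^{-1/2}e^{-c(pn-j)^2/n}$, and summing over $j$ yields $O(1)$. Hence the contribution of this range is
\[
\frac{p}{\sqrt{2\pi q}}\sum_{k\le pn}k^{-1/2}+O(1)=\sqrt{\tfrac{2}{\pi}}\frac{p^{3/2}}{\sqrt q}\sqrt n+O(1).
\]
Here I use $\sum_{k\le M}k^{-1/2}=2\sqrt M+O(1)$ and absorb the boundary shift of $\sqrt n$ terms, each of size $O(n^{-1/2})$. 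Symmetrically, for $j\ge pn+\sqrt n$ I set $k=n-j\le qn-\sqrt n$. This range contributes $\sqrt{2/\pi}\,q^{3/2}p^{-1/2}\sqrt n+O(1)$.

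\textbf{Step 3 (transition window).} For $|j-pn|<\sqrt n$, both $\max_iT_1$ and $\max_iT_2$ are $O(n^{-1/2})$ by Step 1, since $j$ and $n-j$ are both of order $n$. So these $O(\sqrt n)$ terms contribute $O(1)$ in total. Adding the three ranges gives $|S_n-A(p)\sqrt n|\le C_p$ for all large $n$. Enlarging $C_p$ to cover the finitely many small $n$ finishes the proof.
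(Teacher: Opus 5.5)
Your proposal is correct and follows the paper's argument. The shared ingredients are the decomposition $m_{i,j}=p\,\P(\mathrm{Bin}(i-1,p)=j-1)+q\,\P(\mathrm{Bin}(i-1,q)=n-j)$, the sandwich between a single maximum and the sum of the two maxima, the ratio test placing the maximizer at $\lfloor k/p\rfloor$, and the Stirling peak value $1/\sqrt{2\pi qk}$ summed to $2\sqrt{pn}+O(1)$. Your uniform $O_p(k^{-3/2})$ error in Step~1 is exactly what the $O(1)$ remainder requires; the paper only records $M_{a,n}(r)\sim 1/\sqrt{2\pi(1-a)r}$.

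The one step to tighten is the off-peak term in Step~2. The product bound $Cn^{-1/2}e^{-c(pn-j)^2/n}$ is true, but it does not follow from simply combining Hoeffding with the uniform $O(n^{-1/2})$ local bound: their minimum, summed over $j$, is of order $\sqrt{\log n}$. The paper's route is simpler. For such $j$ the maximum over $i$ is attained at $i=n$, so these terms are point probabilities of the single law $\mathrm{Bin}(n-1,q)$ and sum to at most $q$.
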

When $p=1/2$, we get $A(p)=\sqrt{\frac{2}{\pi}}$ and this recovers~\cite[Theorem 1.3]{Clay2025a} and~\cite[Theorem II]{Tripathi2026}.
\begin{proof}
For $t\ge 0$, let
\[
 b_{a,t}(r):=\binom{t}{r}a^r(1-a)^{t-r},
 \qquad 0\le r\le t,
\]
and set
\[
 M_{a,n}(r):=\max_{0\le t\le n} b_{a,t}(r).
\]
Notice that
\[f_{a,t}(r):=\frac{b_{a,t+1}(r)}{b_{a,t}(r)}=\frac{t+1}{t+1-r}(1-a)\]
so that $f_{a,t}(r)>1$ for $t<-1+r/a$ and $f_{a,t}(r)<1$ for $t>-1+r/a$. It follows that $b_{a,t}(r)$ is maximized when $t=\min(n,\lfloor r/a\rfloor)$. Therefore, 
\[M_{a,n}(r)=\begin{cases}
    \binom{\lfloor r/a\rfloor}{r}a^r(1-a)^{\lfloor r/a\rfloor-r}, & 0\leq r\leq\lfloor an\rfloor \\
    \binom{n}{r}a^r(1-a)^{n-r}, & \lfloor an\rfloor+1\leq r\leq n.
\end{cases}\]
Notice that if $X$ is a binomial random variable with parameters $(\lfloor r/a\rfloor,a)$ then $M_{a,n}(r)=\mathbb{P}(X=r)=\mathbb{P}(X=\E[X])$ for all $0\leq r\leq\lfloor an\rfloor$. It follows from Stirling's estimate that
\begin{equation}
\label{eq:firstboundonman}
    M_{a,n}(r)\sim\frac{1}{\sqrt{2\pi(1-a)r}}
\end{equation}
for all $0\leq r\leq\lfloor an\rfloor$. On the other hand, we have that if $Y$ is a binomial random variable with parameters $(n,a)$, then for all $\lfloor an\rfloor+1\leq r\leq n$, we have that $M_{a,n}(r)=\mathbb{P}(Y=r)$. Hence, we can apply Stirling's estimate to the mass function of $Y$ when $|r-an|=O(1)$ and get that the bound in Equation $\eqref{eq:firstboundonman}$ holds for all $0\leq r\leq \lfloor an\rfloor+O(1)$. It follows that
\begin{equation}
\begin{aligned}
    \sum_{r=0}^{\lfloor an\rfloor+O(1)} M_{a,n}(r)&\sim\sum_{r=0}^{\lfloor an\rfloor+O(1)} \frac{1}{\sqrt{2\pi(1-a)r}}\\
    &=\sqrt{\frac{2a}{\pi(1-a)}}\sqrt{n}+O_a(1),\label{eqn:Truncated}\\
\end{aligned}
\end{equation}
This implies that
\begin{equation}
\begin{aligned}
    \sum_{r=0}^n M_{a,n}(r)&=\sqrt{\frac{2a}{\pi(1-a)}}\sqrt{n}+O_a(1)+\sum_{r=\lfloor an\rfloor+O(1)}^n M_{a,n}(r)\\
    &=\sqrt{\frac{2a}{\pi(1-a)}}\sqrt{n}+O_a(1)+\sum_{r=\lfloor an\rfloor+O(1)}^n \mathbb{P}(Y=r)\\
    &=\sqrt{\frac{2a}{\pi(1-a)}}\sqrt{n}+O_a(1)\label{eqn:full}
\end{aligned}
\end{equation}
where in passing from the second to the third equality, the $O_a(1)$ term has increased by at most $1$ since $\mathbb{P}(Y=r)$ is a probability measure on $0\leq r\leq n$ as the parameters of $Y$ do not depend on $r$. 

To finish the proof, we first note from Proposition $\ref{prop:posi}$ that 
\[
 m_{ij} =p\,b_{i-1,p}(j-1)+q\,b_{i-1,q}(n-j), \qquad 1\le i,j\le n.
\]
To get the bound on $S_n$, we use~\eqref{eqn:full} to get
\begin{align*}
 S_n &\le p\sum_{r=0}^{n-1} M_{p,n}(r)+q\sum_{r=0}^{n-1} M_{q,n}(r)  \\
 &=\sqrt{\frac{2}{\pi}}
 \left(\frac{p^{3/2}}{\sqrt q}+\frac{q^{3/2}}{\sqrt p}\right)
 \sqrt{n}+O_p(1).
\end{align*}
For the lower bound, we split the sum at $R=\lfloor pn\rfloor$.
When
$0\le r\le R$, use the first summand in $m_{ij}$, and for
$R+1\le r\le n-1$, use the second. This gives

\begin{align*}
 S_n&\ge p\sum_{r=0}^{R} M_{p,n}(r)
   +q\sum_{r=R+1}^{n-1}M_{q,n}(N-r)  \\
 &=p\sum_{r=0}^{\lfloor pn\rfloor} M_{p,n}(r)
   +q\sum_{\ell=0}^{n-R-1}M_{q,n}(\ell).
\end{align*}

Note that $n-R-1=qn+O(1)$. We use~\eqref{eqn:Truncated} to conclude 
\[
S_n
 \ge
 \sqrt{\frac{2}{\pi}}
 \left(\frac{p^{3/2}}{\sqrt q}+\frac{q^{3/2}}{\sqrt p}\right)
 \sqrt{n}+O_p(1).
\]
\end{proof}

Let us remark that the above proof already provides a strategy that is approximately optimal, in the sense that for the first $\lfloor pn\rfloor$ cards, the player guesses the card $\lfloor r/p\rfloor$ and on the remaining cards the player guesses the $r$-th card at the $r$-th position. This is precisely how we obtain the lower bound on $S_n$. Determining an exact optimal strategy is more delicate even for $p=1/2$ (see the Discussion in~\cite{Tripathi2026}). A more tractable and practically relevant question is to understand the number of correct guesses one can make, under no feedback, after repeated shelf shuffles. When $p=1/2$, Chen and Ottolini~\cite{ChenOttolini2025} proved cutoff for the repeated symmetric single-shelf shuffle and showed that in roughly a $\frac{5}{4}\log n$ repetitions the shuffled deck is sufficiently close to being uniform. For a uniformly random deck, it is easy to see that under any strategy, with no feedback, the expected number of correct guesses is at most $1$. Let $S_n^{(k)}$ denote the number of correct guesses, with no feedback, after a symmetric single-shelf shuffle is repeated $k$ times. Tripathi~\cite{Tripathi2026} showed that $|S_n^{(k)}-1|\leq Cn^{-2\epsilon}$ for $k>(1+\epsilon)\log n$. A similar analysis for the riffle-shuffle was done by Ciucu~\cite{Ciucu98Nofeedback}. It would be interesting to extend this analysis to the asymmetric shelf shuffle.

\section*{Declarations of interest and Acknowledgments}
There are no competing financial or personal interests that influenced the work reported in this paper. A.C. would like to thank his advisor, Jason Fulman, for helpful conversations.

\bibliographystyle{alpha}
\bibliography{ref}{}

@article {AldousDiaconis1986,
    AUTHOR = {Aldous, David and Diaconis, Persi},
     TITLE = {Shuffling cards and stopping times},
   JOURNAL = {Amer. Math. Monthly},
  FJOURNAL = {American Mathematical Monthly},
    VOLUME = {93},
      YEAR = {1986},
    NUMBER = {5},
     PAGES = {333--348},
      ISSN = {0002-9890,1930-0972},
   MRCLASS = {60C05 (60B15 60G40 60J15)},
  MRNUMBER = {841111},
MRREVIEWER = {Endre\ Cs\'aki},
       DOI = {10.2307/2323590},
       URL = {https://doi.org/10.2307/2323590},
}

@article {BlackwellHodges1957,
    AUTHOR = {Blackwell, David and Hodges, Jr., J. L.},
     TITLE = {Design for the control of selection bias},
   JOURNAL = {Ann. Math. Statist.},
  FJOURNAL = {Annals of Mathematical Statistics},
    VOLUME = {28},
      YEAR = {1957},
     PAGES = {449--460},
      ISSN = {0003-4851},
   MRCLASS = {62.0X},
  MRNUMBER = {88849},
MRREVIEWER = {D.\ G.\ Chapman},
       DOI = {10.1214/aoms/1177706973},
       URL = {https://doi.org/10.1214/aoms/1177706973},
}

@article {Proschan91,
    AUTHOR = {Proschan, Michael},
     TITLE = {A note on {D}. {H}. {B}lackwell and {J}. {L}. {H}odges, {J}r.:
              ``{D}esign for the control of selection bias'' [{A}nn.\
              {M}ath.\ {S}tatist.\ {\bf 28} (1957), 449--460; {MR} {\bf 19},
              589], and {P}. {D}iaconis and {R}. {L}. {G}raham: ``{T}he
              analysis of sequential experiments with feedback to subjects''
              [{A}nn.\ {S}tatist.\ {\bf 9} (1981), no.\ 1, 3--23;
              {MR}0600529 (82f:62153)]},
   JOURNAL = {Ann. Statist.},
  FJOURNAL = {The Annals of Statistics},
    VOLUME = {19},
      YEAR = {1991},
    NUMBER = {2},
     PAGES = {1106--1108},
      ISSN = {0090-5364,2168-8966},
   MRCLASS = {60J15 (60C05 62E20 62P10)},
  MRNUMBER = {1105868},
MRREVIEWER = {James\ Allen\ Fill},
       DOI = {10.1214/aos/1176348144},
       URL = {https://doi.org/10.1214/aos/1176348144},
}

@article {ChenOttolini2025,
    AUTHOR = {Chen, Ray and Ottolini, Andrea},
     TITLE = {Cutoff in total variation for the shelf shuffle},
   JOURNAL = {Electron. Commun. Probab.},
  FJOURNAL = {Electronic Communications in Probability},
    VOLUME = {30},
      YEAR = {2025},
     PAGES = {Paper No. 44, 10},
      ISSN = {1083-589X},
   MRCLASS = {60J10},
  MRNUMBER = {4908782},
       DOI = {10.1214/25-ecp691},
       URL = {https://doi.org/10.1214/25-ecp691},
}

@article {diaconis1996cutoff,
    AUTHOR = {Diaconis, Persi},
     TITLE = {The cutoff phenomenon in finite {M}arkov chains},
   JOURNAL = {Proc. Nat. Acad. Sci. U.S.A.},
  FJOURNAL = {Proceedings of the National Academy of Sciences of the United States of America},
    VOLUME = {93},
      YEAR = {1996},
    NUMBER = {4},
     PAGES = {1659--1664},
      ISSN = {0027-8424},
   MRCLASS = {60J10 (60J15)},
  MRNUMBER = {1374011},
MRREVIEWER = {James\ Allen\ Fill},
       DOI = {10.1073/pnas.93.4.1659},
       URL = {https://doi.org/10.1073/pnas.93.4.1659},
}

@article{Clay2025a,
  author        = {A.~Clay},
  title         = {Guessing Strategies for Shuffling Machines},
  journal       = {arXiv preprint},
  year          = {2025},
  eprint        = {2507.10294},
  archivePrefix = {arXiv},
  primaryClass  = {math.PR}
}

@article{Clay2025b,
  author        = {A.~Clay},
  title         = {Limit Theorems for Descents and Inversions of Shelf-Shuffles},
  journal       = {arXiv preprint},
  year          = {2025},
  eprint        = {2510.00343},
  archivePrefix = {arXiv},
  primaryClass  = {math.PR}
}

@article {DiaconisFulmanHolmes2013,
    AUTHOR = {Diaconis, Persi and Fulman, Jason and Holmes, Susan},
     TITLE = {Analysis of casino shelf shuffling machines},
   JOURNAL = {Ann. Appl. Probab.},
  FJOURNAL = {The Annals of Applied Probability},
    VOLUME = {23},
      YEAR = {2013},
    NUMBER = {4},
     PAGES = {1692--1720},
      ISSN = {1050-5164,2168-8737},
   MRCLASS = {60C05 (05A15)},
  MRNUMBER = {3098446},
MRREVIEWER = {Kent\ E.\ Morrison},
       DOI = {10.1214/12-aap884},
       URL = {https://doi.org/10.1214/12-aap884},
}

@book {DiaconisFulman2023Shuffling,
    AUTHOR = {Diaconis, Persi and Fulman, Jason},
     TITLE = {The mathematics of shuffling cards},
 PUBLISHER = {American Mathematical Society, Providence, RI},
      YEAR = {[2023] \copyright 2023},
     PAGES = {xii+346},
      ISBN = {[9781470463038]},
   MRCLASS = {60-02 (05Axx 60B15 60J10 91A60)},
  MRNUMBER = {4565368},
MRREVIEWER = {Martin\ V.\ Hildebrand},
}

@article {DiaconisGraham1981,
    AUTHOR = {Diaconis, Persi and Graham, Ronald},
     TITLE = {The analysis of sequential experiments with feedback to
              subjects},
   JOURNAL = {Ann. Statist.},
  FJOURNAL = {The Annals of Statistics},
    VOLUME = {9},
      YEAR = {1981},
    NUMBER = {1},
     PAGES = {3--23},
      ISSN = {0090-5364,2168-8966},
   MRCLASS = {62L99 (60G40 92A25)},
  MRNUMBER = {600529},
MRREVIEWER = {Colin\ L.\ Mallows},
       URL =
              {http://links.jstor.org/sici?sici=0090-5364(198101)9:1<3:TAOSEW>2.0.CO;2-E&origin=MSN},
}

@article{Fisher1936,
author  = {R.~A.~Fisher},
    title   = {Design of experiments},
    journal = {British Medical Journal},
    year    = {1936},
    volume ={1},
    number ={3923},
		pages={554},
 }

@article{Diaconis1978,
		author  = {P.~Diaconis},
    title   = {Statistical problems in ESP research},
    journal = {Science},
    year    = {1978},
    volume ={201},
    number ={4351},
		pages={131--136},
 }

@article {Diaconis2022a,
    AUTHOR = {Diaconis, Persi and Graham, Ron and He, Xiaoyu and Spiro, Sam},
     TITLE = {Card guessing with partial feedback},
   JOURNAL = {Combin. Probab. Comput.},
  FJOURNAL = {Combinatorics, Probability and Computing},
    VOLUME = {31},
      YEAR = {2022},
    NUMBER = {1},
     PAGES = {1--20},
      ISSN = {0963-5483,1469-2163},
   MRCLASS = {60C05},
  MRNUMBER = {4356453},
MRREVIEWER = {Toshio\ Nakata},
       DOI = {10.1017/s0963548321000134},
       URL = {https://doi.org/10.1017/s0963548321000134},
}

@article {Efron1971,
    AUTHOR = {Efron, Bradley},
     TITLE = {Forcing a sequential experiment to be balanced},
   JOURNAL = {Biometrika},
  FJOURNAL = {Biometrika},
    VOLUME = {58},
      YEAR = {1971},
     PAGES = {403--417},
      ISSN = {0006-3444,1464-3510},
   MRCLASS = {62L05},
  MRNUMBER = {312660},
MRREVIEWER = {S.\ Blumenthal},
       DOI = {10.1093/biomet/58.3.403},
       URL = {https://doi.org/10.1093/biomet/58.3.403},
}

@article {HeOttolini2021,
    AUTHOR = {He, Jimmy and Ottolini, Andrea},
     TITLE = {Card guessing and the birthday problem for sampling without
              replacement},
   JOURNAL = {Ann. Appl. Probab.},
  FJOURNAL = {The Annals of Applied Probability},
    VOLUME = {33},
      YEAR = {2023},
    NUMBER = {6B},
     PAGES = {5208--5232},
      ISSN = {1050-5164,2168-8737},
   MRCLASS = {60C05},
  MRNUMBER = {4677732},
MRREVIEWER = {Lyuben\ R.\ Mutafchiev},
       DOI = {10.1214/23-aap1946},
       URL = {https://doi.org/10.1214/23-aap1946},
}

@unpublished{Gilbert1955,
    author  = {E.~Gilbert},
    title   = {Theory of shuffling},
     year    = {1955},
note = {Technical memorandum, Bell Labs},  
 }

@book {Kraitchik1942Recreations,
    AUTHOR = {Kraitchik, Maurice},
     TITLE = {Mathematical recreations},
      NOTE = {2d ed},
 PUBLISHER = {Dover Publications, Inc., New York},
      YEAR = {1953},
     PAGES = {330},
   MRCLASS = {10.0X},
  MRNUMBER = {52446},
}

@article{Markov,
author  = {A.~Markov},
    title   = {A Extension of the law of large numbers to dependent events (Russian).},
    journal = {Bull. Soc. Math. Kazan},
    year    = {1906},
volume={2},
  pages={155--156},
 }

@article {OttoliniSteiner2023,
    AUTHOR = {Ottolini, Andrea and Steinerberger, Stefan},
     TITLE = {Guessing cards with complete feedback},
   JOURNAL = {Adv. in Appl. Math.},
  FJOURNAL = {Advances in Applied Mathematics},
    VOLUME = {150},
      YEAR = {2023},
     PAGES = {Paper No. 102569, 16},
      ISSN = {0196-8858,1090-2074},
   MRCLASS = {62L99 (60G40)},
  MRNUMBER = {4604799},
       DOI = {10.1016/j.aam.2023.102569},
       URL = {https://doi.org/10.1016/j.aam.2023.102569},
}

@book{Poincare,
author  = {H.~Poincare},
    title   = {Calcul des probabilit\'es},
   publisher = {Gauthier Villars, Paris},
edition ={2nd},
    year    = {1912},
 }

@article {Sellke,
    AUTHOR = {Sellke, Mark},
     TITLE = {Cutoff for the asymmetric riffle shuffle},
   JOURNAL = {Ann. Probab.},
  FJOURNAL = {The Annals of Probability},
    VOLUME = {50},
      YEAR = {2022},
    NUMBER = {6},
     PAGES = {2244--2287},
      ISSN = {0091-1798,2168-894X},
   MRCLASS = {60C05 (20P05 60B15 60J10)},
  MRNUMBER = {4499839},
MRREVIEWER = {Wojciech\ Bartoszek},
       DOI = {10.1214/22-aop1582},
       URL = {https://doi.org/10.1214/22-aop1582},
}

@article{KP2024,
    author  = {M.~Kuba and A.~Panholzer},
    title   = {On Card guessing with two types of cards},
		journal = {J. Statist. Plann. Inference},
		volume  = {232},
    year    = {2024},
		pages   = {Paper No. 106160, 18~pages},
 }

@article {OT2024,
    AUTHOR = {Ottolini, Andrea and Tripathi, Raghavendra},
     TITLE = {Central limit theorem in complete feedback games},
   JOURNAL = {J. Appl. Probab.},
  FJOURNAL = {Journal of Applied Probability},
    VOLUME = {61},
      YEAR = {2024},
    NUMBER = {2},
     PAGES = {654--666},
      ISSN = {0021-9002,1475-6072},
   MRCLASS = {60F05 (60G70 91A60)},
  MRNUMBER = {4740956},
MRREVIEWER = {Lina\ Mallozzi},
       DOI = {10.1017/jpr.2023.64},
       URL = {https://doi.org/10.1017/jpr.2023.64},
}

@misc{Tripathi2026,
      title={On the position matrix of single-shelf shuffle and card guessing}, 
      author={Tripathi, Raghavendra },
      year={2026},
      eprint={2602.07920},
      archivePrefix={arXiv},
      primaryClass={math.PR},
      url={https://arxiv.org/abs/2602.07920}, 
}

@misc{Tri26cut,
      title={Cutoff for asymmetric shelf shuffle}, 
      author={Tripathi, Raghavendra},
      year={2026},
      eprint={2606.18039},
      archivePrefix={arXiv},
      primaryClass={math.PR},
      url={https://arxiv.org/abs/2606.18039}, 
}

@misc{tri26analysis,
      title={Analysis of the asymmetric shelf shuffle}, 
      author={Tripathi, Raghavendra },
      year={2026},
      eprint={2606.18047},
      archivePrefix={arXiv},
      primaryClass={math.PR},
      url={https://arxiv.org/abs/2606.18047}, 
}

@book {FlaSed,
    AUTHOR = {Flajolet, Philippe and Sedgewick, Robert},
     TITLE = {Analytic combinatorics},
 PUBLISHER = {Cambridge University Press, Cambridge},
      YEAR = {2009},
     PAGES = {xiv+810},
      ISBN = {978-0-521-89806-5},
   MRCLASS = {05-02 (05A15 05A16 60C05 60E10 82-01)},
  MRNUMBER = {2483235},
       DOI = {10.1017/CBO9780511801655},
       URL = {https://doi.org/10.1017/CBO9780511801655},
}

@article {BayerDiaconis,
    AUTHOR = {Bayer, Dave and Diaconis, Persi},
     TITLE = {Trailing the dovetail shuffle to its lair},
   JOURNAL = {Ann. Appl. Probab.},
  FJOURNAL = {The Annals of Applied Probability},
    VOLUME = {2},
      YEAR = {1992},
    NUMBER = {2},
     PAGES = {294--313},
      ISSN = {1050-5164,2168-8737},
   MRCLASS = {60C05 (20B30 60B15)},
  MRNUMBER = {1161056},
MRREVIEWER = {David\ J.\ Aldous},
       URL = {http://links.jstor.org/sici?sici=1050-5164(199205)2:2<294:TTDSTI>2.0.CO;2-F&origin=MSN},
}

@book {DZ10,
    AUTHOR = {Dembo, Amir and Zeitouni, Ofer},
     TITLE = {Large deviations techniques and applications},
    SERIES = {Stochastic Modelling and Applied Probability},
    VOLUME = {38},
      NOTE = {Corrected reprint of the second (1998) edition},
 PUBLISHER = {Springer-Verlag, Berlin},
      YEAR = {2010},
     PAGES = {xvi+396},
      ISBN = {978-3-642-03310-0},
   MRCLASS = {60F10},
  MRNUMBER = {2571413},
       DOI = {10.1007/978-3-642-03311-7},
       URL = {https://doi.org/10.1007/978-3-642-03311-7},
}

@article {SW14,
    AUTHOR = {Saumard, Adrien and Wellner, Jon A.},
     TITLE = {Log-concavity and strong log-concavity: A review},
   JOURNAL = {Stat. Surv.},
  FJOURNAL = {Statistics Surveys},
    VOLUME = {8},
      YEAR = {2014},
     PAGES = {45--114},
      ISSN = {1935-7516},
   MRCLASS = {60E05 (60E15 62E10 62H05)},
  MRNUMBER = {3290441},
MRREVIEWER = {Roger\ B.\ Nelsen},
       DOI = {10.1214/14-SS107},
       URL = {https://doi.org/10.1214/14-SS107},
}

@article {Pitman,
    AUTHOR = {Pitman, Jim},
     TITLE = {Probabilistic bounds on the coefficients of polynomials with
              only real zeros},
   JOURNAL = {J. Combin. Theory Ser. A},
  FJOURNAL = {Journal of Combinatorial Theory. Series A},
    VOLUME = {77},
      YEAR = {1997},
    NUMBER = {2},
     PAGES = {279--303},
      ISSN = {0097-3165,1096-0899},
   MRCLASS = {05A16 (05A20 60C05 60G99)},
  MRNUMBER = {1429082},
MRREVIEWER = {Edward\ A.\ Bender},
       DOI = {10.1006/jcta.1997.2747},
       URL = {https://doi.org/10.1006/jcta.1997.2747},
}

@book {Varadhan,
    AUTHOR = {Varadhan, S. R. S.},
     TITLE = {Large deviations},
    SERIES = {Courant Lecture Notes in Mathematics},
    VOLUME = {27},
 PUBLISHER = {Courant Institute of Mathematical Sciences, New York; American Mathematical Society, Providence, RI},
      YEAR = {2016},
     PAGES = {vii+104},
      ISBN = {978-0-8218-4086-3},
   MRCLASS = {60F10 (60-02 60H10 60J25 60K35)},
  MRNUMBER = {3561097},
MRREVIEWER = {Max\ Fathi},
}

@article {LambertWFunction,
    AUTHOR = {Corless, R. M. and Gonnet, G. H. and Hare, D. E. G. and
              Jeffrey, D. J. and Knuth, D. E.},
     TITLE = {On the {L}ambert {$W$} function},
   JOURNAL = {Adv. Comput. Math.},
  FJOURNAL = {Advances in Computational Mathematics},
    VOLUME = {5},
      YEAR = {1996},
    NUMBER = {4},
     PAGES = {329--359},
      ISSN = {1019-7168,1572-9044},
   MRCLASS = {33E20},
  MRNUMBER = {1414285},
       DOI = {10.1007/BF02124750},
       URL = {https://doi.org/10.1007/BF02124750},
}

@article {GMTW,
    AUTHOR = {Gross, Jonathan L. and Mansour, Toufik and Tucker, Thomas W. and Wang, David G. L.},
     TITLE = {Log-concavity of combinations of sequences and applications to
              genus distributions},
   JOURNAL = {SIAM J. Discrete Math.},
  FJOURNAL = {SIAM Journal on Discrete Mathematics},
    VOLUME = {29},
      YEAR = {2015},
    NUMBER = {2},
     PAGES = {1002--1029},
      ISSN = {0895-4801,1095-7146},
   MRCLASS = {05C10 (05A15 05A20)},
  MRNUMBER = {3355766},
MRREVIEWER = {Yan\ Yang},
       DOI = {10.1137/140978867},
       URL = {https://doi.org/10.1137/140978867},
}

@article{StanleyLogConc,
author = {Stanley, R.},
journal = {Annals of the New York Academy of Sciences},
title = {Log-Concave and Unimodal Sequences in Algebra, Combinatorics, and Geometry},
year = {1989},
volume = {576},
pages = {500-535}}

@article {Ciucu98Nofeedback,
    AUTHOR = {Ciucu, Mihai},
     TITLE = {No-feedback card guessing for dovetail shuffles},
   JOURNAL = {Ann. Appl. Probab.},
  FJOURNAL = {The Annals of Applied Probability},
    VOLUME = {8},
      YEAR = {1998},
    NUMBER = {4},
     PAGES = {1251--1269},
      ISSN = {1050-5164,2168-8737},
   MRCLASS = {60C05 (60G40)},
  MRNUMBER = {1661184},
MRREVIEWER = {David\ J.\ Aldous},
       DOI = {10.1214/aoap/1028903379},
       URL = {https://doi.org/10.1214/aoap/1028903379},
}

\end{document}